\documentclass[reqno,12pt]{amsart}
\usepackage[left=3.5cm, top=3.2cm, bottom=3.2cm, right=3.5cm]{geometry}
\usepackage{amssymb}
\usepackage{amsxtra}
\usepackage{amsmath}
\usepackage{amsfonts}
\usepackage{color}

%%%%%%% Ian's changes Aug. 2016 %%%%%

%\newcommand{\ian}{\textcolor[rgb]{1.00,0.00,0.00}}

\newcommand{\ian}{}

%%%%%%%%%%%%%%%%%%%%%%%%%

\numberwithin{equation}{section}

\newtheorem{thm}{Theorem}[section]
\newtheorem{Mthm}{Main Theorem}
\newtheorem{prop}[thm]{Proposition}
\newtheorem{cor}[thm]{Corollary}

\newtheorem{lem}[thm]{Lemma}
\newtheorem{de}[thm]{Definition}
\newtheorem{rem}[thm]{Remark}
\newtheorem{ex}[thm]{Example}

\def\ty{{\tilde y}}
\def\w{\omega}
\def\bx{\bold x}
\def\av{\alpha^{\vee}}
\def\tw{\widetilde W}
\def\tv{\widetilde V}
\def \bi{{i}}
\def\res{\mathop{\text{\rm res}}}

\newcommand{\eqa}{\begin{eqnarray}}
\newcommand{\eeqa}{\end{eqnarray}}
\newcommand{\beq}{\begin{equation}}
\newcommand{\eeq}{\end{equation}}
\newcommand{\nn}{\nonumber}
\newcommand{\p}{\partial}

\newcommand{\lm}{\lambda}

\newcommand{\pal}{\partial}

\newcommand{\td}{\tilde d}

\newcommand{\al}{\alpha}

\newcommand{\ta}{\theta}

\def \om{\mu}

 \def \dsum{\displaystyle\sum}

\newcommand{\pf}{\noindent{\it Proof. \ }}
\newcommand{\epf}{$\quad$\hfill
\raisebox{0.11truecm}{\fbox{}}\par\vskip0.4truecm}

\begin{document}
\title[]{Extended affine Weyl groups of BCD-type: their Frobenius manifolds and Landau--Ginzburg superpotentials}
\author[]{
{Boris Dubrovin \ \ Ian A.B. Strachan \ \ Youjin Zhang \  \ Dafeng Zuo}}

\address{Dubrovin\footnote{Deceased on March 19, 2019.} , SISSA, Via Bonomea, 265, I-34136 Trieste, Italy and
Steklov Math. Institute, Moscow, Russia}

\address{Strachan, School of Mathematics and Statistics, University of Glasgow, UK}

\address{Zhang, Department of Mathematical Sciences, Tsinghua
University, Beijing 100084, P. R. China}
\address{Zuo, School of Mathematical Science,
University of Science and Technology of China,
 Hefei 230026,P.R.China}

\email{\mbox{ian.strachan@glasgow.ac.uk,\quad youjin@mail.tsinghua.edu.cn, }
\newline\mbox {\hskip3cm dfzuo@ustc.edu.cn}}
\date{}

\subjclass[2010]{Primary 53D45}

\keywords{extended affine Weyl group, Frobenius manifold, LG superpotential}

\date{\today}

\begin{abstract}
For the root systems of type $B_l, C_l$ and $D_l$, we generalize the
result of \cite{DZ1998} by showing the existence of Frobenius
manifold structures on the orbit spaces of the extended affine Weyl
groups that correspond to any vertex of the Dynkin diagram instead
of a particular choice  made in \cite{DZ1998}. It also depends on certain
additional data. We also construct Landau--Ginzburg superpotentials for these
Frobenius manifold structures.
\end{abstract}
\maketitle

\tableofcontents
%\newpage

\section{Introduction}

\subsection{Background}

\ian{In \cite{DZ1998}, the precursor to this paper, the Frobenius manifold structures on the orbit spaces $\mathbb{C}^{l+1}/{\widetilde{W}}$, where
$\widetilde{W}$ are certain extended affine Weyl groups, were constructed. In particular, the construction depended on a choice of
a specific node on the Dynkin diagram of the underlying Weyl group $W\,.$ In the case $W\cong A_l$ this marked node could be taken
to be an arbitrary node, but for the remaining cases a specific node was used. In addition, again for the case $W\cong A_l$\,,
a Landau--Ginzburg (or LG) superpotential construction of the Frobenius manifold structure was given.
\vskip 2mm
The aim of this paper is to complete the construction for the underlying Weyl groups $W\cong B_l\,,C_l\,,D_l\,,$ providing:
\begin{itemize}
\item[$\bullet$] a construction of the Frobenius manifold structure on the orbit space for an {\sl arbitrary} marked node;
\item[$\bullet$] a LG superpotential construction for all these manifolds.
\end{itemize}
A remaining problem is to extend the construction to the few remaining exceptional Weyl groups, and we hope to return to this problem in a
 later paper.}

\ian{The construction rests on the proof of a Chevalley-type theorem for invariant polynomials for the extended-affine Weyl groups,
together with a Saito-construction for the flat coordinates on the orbit space. Besides this, there is another construction,
which is provided by a LG superpotential construction, and involves a refinement of the genus zero  Hurwitz space theory to the
case of cosine-Laurent polynomials. These two separate constructions
 can then be shown to result in isomorphic Frobenius manifolds.
}

\subsection{The main results}

Let $R$ be an irreducible reduced root system defined in an
$l$-dimensional Euclidean space $V$ with the Euclidean inner product $(~,~)$.
We fix a basis of simple roots $\al_1,\dots,\al_l$  and
denote by  $\alpha_j^{\vee},\ j=1,2,\cdots,l$ the corresponding
coroots. The Weyl group $W$ is generated by the reflections
\beq
{\bx}\mapsto \bx-(\al_j^\vee,\bx)\al_j,\quad \forall\, {\bx}\in V,\ j=1,\dots,l.
\eeq
Recall that the \emph{Cartan matrix} of the root system has integer entries $A_{ij}=\left( \alpha_i, \alpha_j^\vee\right)$
satisfying $A_{ii}=2$, $A_{ij}\leq 0$ for $i\neq j$. The
semi-direct product of $W$ by the lattice of coroots yields the
affine Weyl group $W_a$ that acts on $V$ by the affine
transformations \beq \bx\mapsto w(\bx)+\sum_{j=1}^l m_j\av_j,\quad
w\in W,\ m_j\in \mathbb{Z}. \eeq
We denote by $\omega_1,\dots,\omega_l$ the fundamental weights  defined by the relations
\beq
(\omega_i,\al_j^\vee)=\delta_{ij},\quad i, j=1,\dots,l.
\eeq

Note that the root system $R$ is one of the type $A_l, B_l, C_l, D_l, E_6, E_7,
E_8$, $F_4, G_2$. In what follows the Euclidean space $V$
and the basis $\al_1,\dots, \al_l$ of the simple roots will be defined as in Plate I-IX of \cite{bb}.
Let us fix a simple root $\al_k$ and define an
extension of the affine Weyl group $W_a$ in a similar way as was done in \cite{DZ1998}.
\begin{de} The extended affine Weyl group $\tw=\tw^{(k)}(R)$ acts on the extended space
$$
\tv=V\oplus \mathbb{R}\,,
$$
and is generated by the transformations
\begin{equation}
x=(\bx,x_{l+1})\mapsto (w(\bx)+\sum_{j=1}^l m_j\av_j, \
x_{l+1}),\quad w\in W,\ m_j\in \mathbb Z,
\end{equation}
and
\begin{equation}
x=(\bx,x_{l+1})\mapsto (\bx+\gamma\, \w_k,\ x_{l+1}-\gamma).
\end{equation}
Here $1\le k\le l$, $\gamma=1$ except for the cases when $R=B_l, k=l$ and $R=F_4, k=3$ or $k=4$, in these three
cases $\gamma=2$.
\end{de}

The above definition of the extended affine Weyl group coincides with the one given
in \cite{DZ1998} for the particular choice of $\al_k$ made there.
We note that in the cases for which $\gamma=1$ the numbers $\frac12 (\al_k,\al_k)$ are integers, while
for the three exceptional cases $\frac12 (\al_k,\al_k)=\frac12$.
\ian{Since we will only be considering the non-exceptional cases, we will take $\gamma=1$ for the remainder of the paper.}

Coordinates $x_1,\dots,x_l$ may be introduced on the space $V$ via the expression
\beq
\bx=x_1 \al_1^\vee+\dots+x_l \al_l^\vee.
\eeq
Let $f=\det(A_{ij})\,,$ the determinant of the Cartan matrix of the root system $R$.
\begin{de}[\cite{DZ1998}] ${\mathcal A}={\mathcal A}^{(k)}(R)$ is
the ring of all $\ \tw$-invariant Fourier polynomials of the form
$$
\sum_{m_1,\dots,m_{l+1}\in\, \mathbb{Z}} a_{m_1,\dots,m_{l+1}} e^{2\pi i
(m_1 x_1+\cdots+m_l x_l+\frac1{f} m_{l+1} x_{l+1})}
$$
bounded in the limit
\begin{equation}\label{bei-2}
\bx=\bx^{0}-i \w_k\tau,\quad x_{l+1}=x_{l+1}^{0}+i \tau,\quad
\tau\to +\infty
\end{equation}
for any $\ x^{0}=(\bx^{0},x_{l+1}^{0})$.
\end{de}

\ian{Condition (\ref{bei-2}) is essential for this construction.\footnote{\ian{From the invariance with respect to $\widetilde{W}$,
it easily follows (using theorem [B] in \cite{DZ1998, bb}) that any $f(x)$ can be represented
as a polynomial in $\ty_1(x),\dots,\ty_{l+1}(x), {\ty_{l+1}(x)}^{-1}$. The boundedness condition \eqref{bei-2} is equivalent to
the restriction to invariants which extend over the locus $\ty_{l+1}=0$. With this condition, $\mathcal{M}=\mbox{Spec}(\mathcal{A})$ is a partial
compactification of the full complex
orbit space.}}
Constraints also appear in the more abstract constructions in \cite{LO2,slodowy,wir} and an open problem is
 to relate these seemingly different sets of constraints.}

We introduce a set of numbers
\beq\label{zh-16}
d_j=(\omega_j, \omega_k),\quad j=1,\dots,l
\eeq
and define the following Fourier polynomials \cite{DZ1998}
\eqa
&&\ty_j(x)=e^{2\pi i d_j x_{l+1}} y_j(\bx),\quad j=1,\dots,l,\label{ip-a}\\
&&\ty_{l+1}(x)=e^{\frac{2\pi i}{\gamma} x_{l+1}}.\label{ip-b}
\eeqa
Here
$y_1(\bx),\dots,y_l(\bx)$ are the basic $W_a$-invariant Fourier
polynomials defined by
\beq y_j(\bx)=\frac1{n_j} \sum_{w\in W}
e^{2\pi i (\omega_j, w(\bx))},\quad n_j=\#\{w\in W|e^{2\pi i
(\omega_j,w(\bx))}=e^{2\pi i (\omega_j,\bx)}\}.\label{fip}
\eeq
It was shown in \cite{DZ1998} that for {certain} particular choices of the
simple root $\al_k$, a Chevalley-type theorem holds true for the
ring ${\mathcal A}$, i.e., it is isomorphic to the polynomial ring generated by
$\ty_1,\dots,\ty_{l+1}$, and thus the orbit space of the extended affine Weyl
group $\widetilde{W}$ \emph{defined} as
$\mathcal{M}={\rm Spec} \,{\mathcal A}$  is an affine algebraic variety of dimension
$l+1$. In \cite{DZ1998} it was further proved that on such an orbit space there
exists a Frobenius manifold structure whose potential is a
polynomial of $t^1,\dots, t^{l+1}, e^{t^{l+1}}$. Here $t^1,\dots,
t^{l+1}$ are the flat coordinates of the Frobenius manifold. For
the root system of type $A_l$, there is in fact no restrictions on
the choice of $\al_k$. However, for the root systems of
type $B_l, C_l, D_l, E_6, E_7, E_8, F_4, G_2$ there is
only one choice for each system.

In \cite{slodowy} Slodowy pointed out that the Chevalley-type theorem of
\cite{DZ1998} is a consequence of the results of Looijenga and Wirthm\"uller \cite{LO1,LO2, wir},
and in fact it holds true for any choice of the base element $\al_k$, or
equivalently, for any fixed vertex of the Dynkin diagram. Hence:

\begin{thm}[\cite{slodowy,wir,LO1,LO2}]\label{thm1}
The ring $\mathcal{A}$ is isomorphic to the ring of polynomials of
$\tilde{y}_1(x)$, $\cdots$, $\tilde{y}_{l+1}(x)$.
\end{thm}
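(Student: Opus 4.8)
The plan is to reduce the statement to the Chevalley-type theorems of Looijenga and Wirthmüller, exactly as Slodowy indicates. First I would make precise the geometric picture behind the ring $\mathcal{A}$: the generators $\ty_1,\dots,\ty_l$ are, up to the exponential twist $e^{2\pi i d_j x_{l+1}}$, the basic $W_a$-invariant Fourier polynomials $y_j(\bx)$ on the torus $V/Q^\vee$ (where $Q^\vee$ is the coroot lattice), while $\ty_{l+1}=e^{2\pi i x_{l+1}/\gamma}$ carries the extra direction. The boundedness condition \eqref{bei-2} singles out a one-sided cusp, which is precisely the datum that picks out, on the Looijenga/Wirthmüller side, a particular face of the fundamental alcove — the face opposite to the fixed vertex $\al_k$. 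So the first step is to set up the dictionary: $\tw^{(k)}(R)$ acting on $\tv$ corresponds to an affine Weyl group acting on a space whose invariant theory is governed by a marked point of the Dynkin diagram, and the $\tw$-invariant bounded Fourier polynomials correspond to the regular functions on the Looijenga torus-quotient associated with that marking.

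Second, I would invoke the Looijenga–Wirthmüller theorem in the form: for an affine Weyl group acting on (an appropriate cover of) the complexified torus, the algebra of invariant functions regular at the chosen cusp is a polynomial algebra, and one can read off explicit homogeneous generators from the fundamental weights together with one generator accounting for the extension. Concretely this means: the ring of $W_a$-invariant Fourier polynomials is freely generated by $y_1(\bx),\dots,y_l(\bx)$ (this is the classical statement on $V/Q^\vee$), and then adjoining the $x_{l+1}$-direction with the cusp condition \eqref{bei-2} turns the problem into Looijenga's weighted setting, whose output is that $\mathcal{A}=\mathbb{C}[\ty_1,\dots,\ty_{l+1}]$ with no algebraic relations. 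The role of $\gamma$ and of the weights $d_j=(\omega_j,\omega_k)$ is simply to make the twists $e^{2\pi i d_j x_{l+1}}$ and $e^{2\pi i x_{l+1}/\gamma}$ be honest Fourier polynomials of the allowed form with the stated denominator $f=\det(A_{ij})$; I would verify this compatibility as a bookkeeping lemma (the $d_j$ are rational with denominator dividing $f$, so $f d_j\in\mathbb{Z}$, matching the $\frac{1}{f}m_{l+1}$ in the definition of $\mathcal{A}$).

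Third, to confirm that $\ty_1,\dots,\ty_{l+1}$ genuinely lie in $\mathcal{A}$ and generate it, I would check $\tw$-invariance directly: invariance under the $W$-part and the coroot translations is inherited from the $W_a$-invariance of the $y_j$, while invariance under the extra generator $\bx\mapsto\bx+\gamma\w_k$, $x_{l+1}\mapsto x_{l+1}-\gamma$ follows from $(\omega_j,\gamma\omega_k)=\gamma d_j$ together with $y_j(\bx+\gamma\omega_k)$ differing from $y_j(\bx)$ by a factor $e^{-2\pi i\gamma d_j x_{l+1}}$ after the twist cancels — i.e. a short computation using $y_j(\bx+\mu)=e^{2\pi i(\omega_j,\mu)}y_j(\bx)$ for $\mu$ in the weight lattice paired integrally. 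Boundedness in the limit \eqref{bei-2} is immediate for each $\ty_j$ from the sign of $d_j\geq 0$ and $1/\gamma>0$. Then surjectivity onto $\mathcal{A}$ is the content of the Looijenga–Wirthmüller theorem; algebraic independence can be seen either from the same source or by a Jacobian/dimension count showing the map $\mathcal{M}\to\mathbb{C}^{l+1}$ is dominant of the right degree.

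The main obstacle is not any single computation but the translation itself: matching the combinatorial datum "fixed vertex $\al_k$ together with the factor $\gamma$" with the precise hypotheses under which Looijenga and Wirthmüller state their freeness results (which marked alcove face, which lattice between $Q^\vee$ and $P^\vee$, and how the one-parameter extension interacts with their weighting). Once that dictionary is pinned down — in particular checking that the cusp \eqref{bei-2} really corresponds to an admissible choice in their framework for \emph{every} $k$, not just the distinguished one of \cite{DZ1998} — the polynomiality of $\mathcal{A}$ drops out, and with it the identification $\mathcal{M}=\operatorname{Spec}\mathcal{A}\cong\mathbb{C}^{l+1}$ as an affine variety of dimension $l+1$.
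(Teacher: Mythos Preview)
Your proposal is correct in outline and follows precisely the route Slodowy indicated: reduce to the Looijenga--Wirthm\"uller freeness theorems by translating the cusp condition \eqref{bei-2} into their setting. The paper, however, does something genuinely different for $R=B_l,C_l,D_l$: it gives an elementary, self-contained argument modelled on \cite{DZ1998}. Concretely, the paper first writes an arbitrary $f\in\mathcal{A}$ as a Laurent polynomial in $\ty_{l+1}$ with coefficients polynomial in $\ty_1,\dots,\ty_l$ (using only the classical Chevalley theorem for $W_a$), then shows that any negative power $\ty_{l+1}^{-N}$ would force the coefficient $P_{-N}$ to vanish on the ``leading terms'' $\ty_j^0(x^0)$ arising in the limit \eqref{bei-2}; the contradiction comes from proving that these $y_j^0$ are algebraically independent, which is done by explicit case-by-case Jacobian computations in the variables $\rho_1,\dots,\rho_l$. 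Your approach is more conceptual and covers all root systems at once, at the price of importing a substantial external theorem and having to pin down the dictionary carefully (you rightly flag this as the main obstacle). The paper's approach is computational but fully elementary, needs no input beyond the basic $W_a$-Chevalley theorem, and makes the structure of the limit transparent; its cost is that it treats only the classical types and requires separate bookkeeping for each of $B_l,C_l,D_l$ and for the various positions of $k$. One small imprecision in your write-up: the identity $y_j(\bx+\mu)=e^{2\pi i(\omega_j,\mu)}y_j(\bx)$ is not literally true for the full Weyl-symmetrized sum; what makes the invariance check go through is that $w(\omega_j)-\omega_j\in Q$ and $\gamma(\beta,\omega_k)\in\mathbb{Z}$ for every root-lattice element $\beta$, which is exactly where the choice of $\gamma$ enters.
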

A natural question, as was raised in \cite{DZ1998,
slodowy}, is {\it whether the geometric structures revealed in
\cite{DZ1998} also exist on the orbit spaces of the extended affine
Weyl groups for an arbitrary choice of the root $\al_k$?} The purpose of the
present paper is to give an affirmative answer to this question for
the root systems of type $B_l, C_l$ and $D_l$ {\ian{(recall that for the root system of type $A_l$, the question was
 already answered affirmatively in \cite{DZ1998}). We will therefore concentrate on these cases,
 the proofs of which turn out to work in a very similar manner.}}
In Sec.\ref{sec-2} we give an
elementary proof of Theorem \ref{thm1} {\ian{for
the root systems of type $B_l, C_l$ and $D_l$} that is based on the proof of
the Chevalley type theorem given in \cite{DZ1998}.

Let ${\mathcal M}$ be the
orbit space of the extended affine Weyl group $\tw^{(k)}(C_l)$ and $\widetilde{\mathcal M}$
{the universal} covering of ${\mathcal M}\setminus\{\ty_{l+1}=0\}$. In Sec.\ref{sec-3},
firstly we introduce
an indefinite metric \footnote{{As is common in the Frobenius manifold literature, we use the word metric to denote a complex-valued, symmetric, non-degenerate, bilinear form.}}
 $(~,~)^{\sptilde}$ on
\ian{${T^*}\widetilde{V}_\mathbb{C}\,,$ the complexification $\widetilde{V}_\mathbb{C}=\widetilde{V}\otimes_\mathbb{R}\mathbb{C}$ of the extended space $\widetilde{V}=V\oplus\mathbb{R}\,.$ This is defined by}
\eqa (d x_s,dx_n)^{\sptilde}=\frac{s}{4\pi^2},\quad  (d x_s,
dx_{l+1})^{\sptilde}=0,\quad (dx_{l+1},dx_{l+1})^{\sptilde}=-\frac1{4 k\pi^2} \eeqa
for $1\le s\le n\le l.$ The projection
{\beq
\Pr: \widetilde V\to \widetilde{\mathcal M},\qquad (x_1,\cdots,x_{l+1})\mapsto (y^1,\cdots,y^{l+1})\nn
\eeq}
induces a symmetric {bilinear form on $T^{*}\widetilde{\mathcal{M}}$} {defined by the matrix}
\begin{equation}\label{metric2}
g^{ij}(y):=\dsum_{a,b=1}^{l+1}\dfrac{\p y^i}{\p x_a}\dfrac{\p
y^j}{\p x_b}(dx_a,dx_b)^{\sptilde},
\end{equation}
{in the coordinates}
$ y^1=\ty_1,\dots, y^l=\ty_l,\ y^{l+1}=\log \ty_{l+1}=2 \pi i x_{l+1}.$
{We will show in Sec.\,\ref{sec-3} that the functions $g^{ij}(y)$ are homogenous polynomials of $y^1, \dots, y^l, e^{y^{l+1}}$, so they are well defined on $\mathcal{M}$. Denote
\beq\label{zh-2}
\Sigma=\{(y^1, \dots, y^l, e^{y^{l+1}})\in \mathcal{M}\,|\,\det(g^{ij}(y))=0\}.
\eeq
Then $\Sigma$ called the \emph{discriminant} of the extended affine Weyl  group $\tw^{(k)}(C_l)$ is an algebraic subvariety of $\mathcal{M}$. It consists of the orbits of points $({\bf x}, x_{l+1})\in \widetilde V$ on the \emph{mirrors} of the group
with $(\beta, {\bf x})\in \mathbb Z$ for some positive root $\beta$, the details of which will be given in Section \ref{sec-3} below.}
Afterwards, we will construct another symmetric bilinear form on $T^{*}\mathcal{M}$ by
\beq\label{1.14} \eta^{ij}(y):= {\mathcal L}_e g^{ij}(y).
\eeq
Here the vector field $e$ has the form
\beq
e=\sum_{j=k}^{l} c_j \frac{\pal}{\pal y^j},
\eeq
it depends on the choice of an integer $m$ in the range $0\le m\le l-k$.
Namely, for a given $m$ the coefficients $c_k,\dots, c_l$ are defined by the generating function
\[ \sum_{j=k}^l c_j u^{l-j} =(u+2)^m (u-2)^{l-k-m}.\]
{The symmetric bilinear forms $(\eta^{ij})$ is non-degenerate on $\mathcal{M}\setminus {\Sigma_1\cup \Sigma_2}$,
{where
\[\Sigma_1=\{f_1=0\}\subset{\mathcal M},\quad \Sigma_2=\{f_2=0\}\subset{\mathcal M}\]
are the loci \footnote{{Vanishing of $f_1$ or $f_2$ is equivalent to the condition $2(x_j-x_{j-1})\in \mathbb Z$ for some $j$. Thus the loci $\Sigma_1, \Sigma_2$ consist of the orbits of points belonging to the mirrors of the affine Weyl group corresponding to the long roots. Hence $\Sigma_1\cup \Sigma_2 \subset \Sigma$. We are grateful to the anonymous referee for this observation. }} of zeros of the following $\tw^{(k)}(C_l)$-invariant polynomials
\beq\label{zh-12}
f_1=e^{2\pi i k\, x_{l+1}}\prod_{j=1}^l\cos^2\pi(x_j-x_{j-1}),\quad
f_2=e^{2\pi i k\, x_{l+1}}\prod_{j=1}^l\sin^2\pi(x_j-x_{j-1})
\eeq
with $x_0=0$,
}
and it gives the flat metric of the Frobenius manifold structure that we are to construct.
Denote
\[\mathcal{M}_{k,m}(C_l):=\left\{\begin{array}{l}
\mathcal{M} \setminus\{\tilde{y}_{l+1}=0\}\cup\Sigma_1\cup\Sigma_2, \quad \textrm{when}\ 0< m<l-k;\\
\mathcal{M} \setminus\{\tilde{y}_{l+1}=0\}\cup \Sigma_1, \quad \textrm{when}\ m=0;\\
\mathcal{M} \setminus\{\tilde{y}_{l+1}=0\}\cup \Sigma_2, \quad \textrm{when}\ m=l-k.
\end{array}\right.\]
Then we will show the following result.}

\begin{Mthm}\label{mt1}
For any fixed integer $0\le m\le l-k$, there exists a unique Frobenius
manifold structure of charge $d=1$ on the orbit space
$\mathcal{M}_{k,m}(C_l)$ of $\tw^{(k)}(C_l)$ such that
\begin{enumerate}
\item the invariant flat metric and the intersection form of the Frobenius manifold structure coincide
with the
metrics $(\eta^{ij}(y))$ (see eq. \eqref{1.14}) and $(g^{ij}(y))$ (see eq. \eqref{metric2}) respectively;
\item the unity and the Euler vector fields have the form
\beq\label{zh-18-1}
e=\dsum_{j=k}^{l} c_j \frac{\pal}{\pal y^j}
\eeq
and
\beq\label{zz1-1}
E=\sum_{\alpha=1}^l \frac{d_\alpha}{d_k} y^\alpha\dfrac{\p}{\p y^\alpha}+\frac{1}{d_k} \dfrac{\p}{\p y^{l+1}},
\eeq
where $d_1,\dots,d_{l}$ are defined in \eqref{add1.4};
\item in the flat coordinates $t^1$, \dots, $t^{l+1}$ of the metric \eqref{1.14} defined on certain covering of $\mathcal{M}_{k,m}(C_l)$ the Frobenius manifold structure is
polynomial in
$t^1,\cdots, t^{l+1},\dfrac{1}{t^{l-m}}, \dfrac{1}{t^{l}}, e^{t^{l+1}}$.
%\begin{enumerate}
%\item the unity vector field $e$ coincides with
In these coordinates
\beq\label{zh-18}
e=\dfrac{\p}{\p t^k}
\eeq
 %\item the Euler vector field has the form
and
\begin{equation}
E=\dsum_{\alpha=1}^{l}\td_\alpha t^\alpha \dfrac{\p}{\p t^\alpha}
+\frac1{k}\dfrac{\p}{\p t^{l+1}},\label{zz1}
\end{equation}
where $\td_1,\dots,\td_{l}$ are defined in (\ref{zh18a})--(\ref{zh18c}).
\end{enumerate}
\end{Mthm}
 {We prove the above theorem in Sec.\,\ref{sec-6}. Let us note that the monodromy group of the Frobenius manifold $\mathcal{M}_{k,m}(C_l)$ (for the definition see \cite{Du1}) is isomorphic to $\tw^{(k)}(C_l)$.}

In Sec.\ref{sec-6} we further show that for the root systems of type $B_l$ and $D_l$
we can apply a similar construction as the one for
the root system of type $C_l$. The resulting Frobenius manifolds are
isomorphic to those obtained from the root system of type $C_l$.

 {For the case of $A_l$ an alternative construction of the Frobenius manifold structure was given in \cite{DZ1998}.
This structure was given in terms of a LG superpotential construction. In particular, it was shown}
that the extended affine Weyl group $\widetilde{W}^{(k)}(A_l)$
describes the monodromy of roots of trigonometric polynomials {\ian - the superpotential -} with a given
bidegree being of the form
\beq \lambda(\varphi)=e^{\bi k\varphi}+a_1e^{\bi (k-1)\varphi}+\cdots+a_{l+1}e^{\bi (k-l-1)\varphi},\quad a_{l+1}\ne 0.\nn\eeq
A natural question is
{\it does there exist a similar construction for the root systems of
type $B_l, C_l$ and $D_l$?} In Sec.\ref{sec-7},
let us denote by $\mathfrak{M}_{k,m,n}$ the space of a particular class of
cosine Laurent series  {\ian{or superpotentials}} of one variable with a given tri-degree $(2k,2m,2n)$ being of the form
\eqa
\lambda(\varphi)=\left(\cos^{2}(\varphi)-1\right)^{-m}
\dsum_{j=0}^{k+m+n} a_j \cos^{2(k+m-j)}(\varphi),\nn
\eeqa
where all $a_j \in \mathbb{C}$, $m,n\in \mathbb{Z}_{\geq 0}$, $k\in \mathbb{N}$, and \ian{ the coefficients $a_0, \dots, a_{k+m+n}$ satisfy the conditions given in
\eqref{cnd-a-1}--\eqref{cnd-a-3}.
The space $\mathfrak{M}_{k,m,n}$ carries a natural structure of Frobenius manifold. Its invariant inner product
$\eta$ and the intersection form $g$ of two vectors $\p'$, $\p''$
tangent to $\mathfrak{M}_{k,m,n}$ at a point $\lambda(\varphi)$ can be defined by
the formulae \eqref{fm2.3} and \eqref{fm2.4}.
We will show that (see Theorem\,\ref{Main2})
\begin{Mthm}\label{mt2}
 The Frobenius manifolds $\mathcal{M}_{k,m}(C_{k+m+n})$ and  $\mathfrak{M}_{k,m,n}$ are locally isomorphic.
\end{Mthm}}
{A function involved in the representation of the form \eqref{fm2.3}, \eqref{fm2.4} of }
the flat pencil of metrics on the Frobenius manifold is called a \emph{LG} \emph{superpotential} of the Frobenius manifold. Observe that the multiplication law on the tangent spaces to the Frobenius manifold can also be expressed in terms of the LG superpotential (see eq. \eqref{fm2.6} below).

 Some concluding remarks are given in the
last section.

%%%%%%%%%%%%%%%%%%%%%%%%%%%%%%%%%%%%%%%%%%%%%%%%%%%%%%%%%%%%%%%%%%%

\section{The proof of Theorem \ref{thm1} for the root systems of type $B_l, C_l, D_l$}\label{sec-2}

In this section, we give an elementary proof of the Theorem \ref{thm1} for the root systems of type
 $B_l, C_l$ and $D_l$ for any fixed vertex of the Dynkin diagram. To this end,
we first write down the explicit expressions of the invariant
Fourier polynomials ${\tilde y}_j(x)$ that are defined by
(\ref{ip-a}), (\ref{ip-b}) for these root systems with the fixed simple root $\al_k$.
We then prove the theorem by using an approach that is similar to the one used in
\cite{DZ1998}.

For the root system of type $B_l$, the numbers $d_j$ defined in (\ref{zh-16}) have the
values
\beq
d_i=i,\ 1\le i\le k,\quad d_j=k,\ k+1\le j\le l-1,\quad d_l=\frac{k}2,\label{addDZ2.1}\\
\eeq
for $k<l$ and
\beq
d_i=\frac{i}2,\ 1\le i\le l-1,\quad d_k=\frac{l}4
\label{addDZ2.2}\eeq
for $k=l$.  The $W_a$-invariant Fourier polynomials
$y_1(\bx),\dots, y_l(\bx)$ defined in (\ref{fip}) have the
expressions \cite{hoffman}
\eqa
&&{y}_j({\bx})=\sigma_j(\xi_1,\cdots,\xi_l),\quad j=1,\dots,l-1,\label{add1.10-a}\\
&& y_l(\bx)=
\prod_{j=1}^{l} \left({e^{i\pi v_j }}+e^{-i\pi  v_j }\right)\label{add1.10-b},
\eeqa
where
\eqa
&&v_1=x_1,\quad v_m=x_m-x_{m-1},\quad 2\le m\le l-1,\nn\\
&&v_{l}=2 x_l-x_{l-1},\label{add1.10-c}\\
&&\xi_j=e^{2\,i\pi v_j}+e^{-2\,i\pi v_j},\quad 1\le j\le l.\nn
\eeqa
Here and henceforth the functions $\sigma_j(\xi_1,\dots,\xi_l)$ denote the $j$-th
elementary symmetric polynomial of $\xi_1,\cdots,\xi_l$ defined by
\beq
\prod_{j=1}^l (z+\xi_j)=\sum_{j=0}^l \sigma_j (\xi_1,\dots,\xi_l) z^{l-j}.
\eeq

For the root system of type $C_l$, the numbers $d_j$ are given by
\beq\label{add1.4}
d_1=1,\dots, d_{k-1}=k-1,\ d_j=k,\quad k\le
j\le l.
\eeq
The $W_a$-invariant Fourier polynomials
$y_1(\bx),\dots, y_l(\bx)$ defined in (\ref{fip}) have the
expressions
\beq\label{add1.5}
y_j({\bx})=\sigma_j(\xi_1,\cdots,\xi_l).
\eeq
Here $\xi_j$ are defined by
\beq
\xi_j={e^{2 i\pi \left( x_{{j}}-x_{{j-1}} \right)}}+
{e^{-2 i \pi \left( x_{{j}}-x_{{j-1}} \right) }},\quad x_0=0,\ 1\le j\le l.\nn
\eeq

For the root system of type $D_l$, we have\\
\noindent i) \eqa
&&d_j=j,\ 1\le j\le k,\quad d_j=k,\ k+1\le j\le l-2,\label{addDZ2.9}\\
&&d_j=\frac{k}2, \ j=l-1,l \label{addDZ2.10}
\eeqa
for \ $k\le l-2$; and\\
\noindent ii)
\beq
d_j=\frac{j}2,\ 1\le j\le l-2, \quad d_{l-1}=\frac{l}4, \quad d_l=\frac{l-2}4
\eeq
for \ $k= l-1$; and\\
\noindent iii)
\beq
d_j=\frac{j}2,\ 1\le j\le l-2, \quad d_{l-1}=\frac{l-2}4, \quad d_l=\frac{l}4
\eeq
for \ $k= l$.\,
The basis of the $W_a$-invariant Fourier
polynomials defined in (\ref{fip}) has the form
\eqa
&&{y}_j({\bx})=\sigma_j(\xi_1,\cdots,\xi_l),\quad j=1,\dots,l-2,\nn\\
&& y_{l-1}({\bf x})=\frac12\left(\prod_{j=1}^l \left(e^{i\pi v_j}+ e^{-i\pi v_j}\right)+
\prod_{j=1}^l \left(e^{i\pi v_j}- e^{-i\pi v_j}\right)\right), \label{add1.15-a} \\
&& y_{l}({\bf x})=\frac12\left(\prod_{j=1}^l \left(e^{i\pi v_j}+ e^{-i\pi v_j}\right)-
\prod_{j=1}^l \left(e^{i\pi v_j}- e^{-i\pi v_j}\right)\right),\nn
\eeqa
where
\eqa
&&v_1=x_1,\quad v_m=x_m-x_{m-1},\quad 2\le m\le l-2,\nn\\
&&v_{l-1}=x_l+x_{l-1}-x_{l-2},\quad v_l=x_{l-1}-x_{l},\label{add1.15-b}\\
&&\xi_j=e^{2 i\pi v_j}+e^{-2 i\pi v_j},\quad 1\le j\le l.\nn
\eeqa

\vskip 0.3truecm
\noindent {\em Proof of Theorem \ref{thm1} for the root system $R=B_l, C_l, D_l$.}\quad
From the explicit expressions of the Fourier polynomials ${\tilde y}_1(x),
\dots,{\tilde y}_{l+1}(x)$, it is not difficult to see that they are
$\tw^{(k)}(R)$-invariant. So in order to prove the theorem, we only need to
show that any element $f(x)$ of the ring $\mathcal{A}$ can be expressed as a
polynomial of ${\tilde y}_1(x),\dots,{\tilde y}_{l+1}(x)$.
By using the fact that the ring of $W_a$-invariant Fourier polynomials is isomorphic
to the polynomial ring generated by
$y_1(\bx),\dots,y_l(\bx)$ and by using the ${\tw}$-invariance
of the function $f(x)\in \mathcal{A}$, we can
represent it as a polynomial of  $\
\ty_1(x),\dots,\ty_l(x),\ty_{l+1}(x),\ty_{l+1}^{-1}$.
Assume
$$
f(x)=\sum_{n\ge -N} \ty_{l+1}^n P_n(\ty_1(x),\dots,\ty_{l}(x)),
$$
and that the polynomial
$\ P_{-N}(\ty_1(x),\dots,\ty_{l}(x))$\
does not vanish identically for a certain positive integer $N$.
From the definition of the functions ${\tilde y}_j(x)$ we know that
in the limit (\ref{bei-2}) we have
\begin{equation}
y_j(\bx)=e^{2\pi d_j\tau}[y_j^{0}(\bx^{0})+\mathcal O(e^{-2\al\pi \tau})],
\quad j=1,\dots,l,
\end{equation}
where $\al$ is a certain positive integer and the
expressions of the functions $\ y_j^{0}(\bx^0)$ will be given below.
So in the limit (\ref{bei-2}) the function $f(x)$ behaves as
$$
f(x)=e^{\frac{2\pi}{\gamma} N\tau-\frac{2\pi i}{\gamma} N x_{l+1}^0}
[P_{-N}(\ty_1^{0}(x^0),\dots,\ty_{l}^{0}(x^0))+ \mathcal O(e^{-2\beta\pi \tau})]
$$
for a certain positive integer $\beta$  and
$$
\ty_j^{0}(x^0)=e^{2\pi i d_j x_{l+1}^{0}}\ y_j^{0}(\bx^0),\quad j=1,
\cdots,l.
$$
Since the function $f(x)$ is bounded for $\ \tau\to +\infty$, we must have
$$
P_{-N}(\ty_1^{0}(x^{0}),\dots,\ty_{l}^{0}(x^{0}))\equiv 0
$$
for any $x^{0}=(\bx^0,x_{l+1}^0)$.
This leads to a contradiction to the algebraic independence of the functions
${\tilde y}_1^{0},\dots,{\tilde y}_l^{0}$, a fact that we will now prove, case-by-case, for the root systems of type $B_l, C_l$ and $D_l$.

\noindent{i)} For the root system of type $B_l$ with $1\le k\le l-1$,
\eqa\label{add1.23}
&&y_j^0(\bx^0)=\rho_j,\quad
 j=1,\cdots,k,\nn\\
&& y_s^0(\bx^0)=\rho_k\rho_s,\quad s=k+1,\cdots,l-1,\nn\\
&&{y_l^0(\bx^0)=\sqrt{\rho_k}\rho_l\,,}\nn
\eeqa
where the functions $\rho_i$ are defined by
\eqa\label{add1.24}
&&\rho_j=\sigma_j(e^{2\pi i v_1^0},\cdots,e^{2\pi i v_k^0}),\quad j=1,\cdots,k,\nn\\
&&\rho_s=\sigma_{s-k}(\xi_{k+1}^0,\cdots,\xi_l^0), \quad s=k+1,\cdots,l-1,\nn\\
&&{\rho_l=\prod_{s=k+1}^l\left(e^{i\pi v_s^0}+e^{-i\pi v_s^0}\right)}\nn
\eeqa
with
\eqa
&&\xi^0_m=e^{2\pi iv_m^0}+e^{-2\pi iv_m^0},\quad m=1,\cdots,l,\nn\\
&&v_1^0=x^0_1,\quad  v_j^0=x^0_j-x_{j-1}^0,\ 2\le j\le l-1,\quad
 v_l^0=2 x_{l}^0-x^0_{l-1}.\nn
\eeqa
With these we obtain
\beq\label{add1.25}
\det\left(\dfrac{\p y_i^0(\bx^0)}{\p \rho_j}\right)=\rho_k^{l-k-1}\sqrt{\rho_k}.
\eeq

When $k=l$, we have
\eqa
&&y_j^0(\bx^0)=\rho_j=\sigma_j(e^{2\pi i v_1^0},\cdots,e^{2\pi i v_l^0}),\quad
 j=1,\cdots,l-1,\nn\\
&& y_l^0(\bx^0)=\rho_l =\prod_{s=1}^l e^{\pi i v_s^0},\nn\\
&&\det(\dfrac{\p y_i^0(\bx^0)}{\p \rho_j})=1.
\eeqa

\noindent{ii)} For the root system of type $C_l$,
\eqa\label{add1.20}
&&y_j^0(\bx^0)=\rho_j,\quad j=1,\cdots,k,\nn\\
&&y_s^0(\bx^0)=\rho_k\rho_s,\quad s=k+1,\cdots,l,\nn
\eeqa
where the functions $\rho_j$ are defined by
\eqa\label{add1.21}
&&\rho_j=\sigma_j(e^{2\pi i v_1^0},\cdots,e^{2\pi i v_k^0}),\quad j=1,\cdots,k,\nn\\
&&\rho_s=\sigma_{s-k}(\xi_{k+1}^0,\cdots,\xi_l^0), \quad s=k+1,\cdots,l\nn
\eeqa
with
\eqa
&&\xi^0_m=e^{2\pi iv_m^0}+e^{-2\pi iv_m^0},\nn\\
&&v_1^0=x^0_1,\ v_m^0=x^0_m-x_{m-1}^0,\quad m=2,\cdots,l.\nn
\eeqa
With these we obtain
\beq\label{add1.22}
\det\left(\dfrac{\p y_i^0(\bx^0)}{\p \rho_j}\right)=(\rho_k)^{l-k}.
\eeq

\noindent{iii)} For the root system of type $D_l$ with $k\le l-2$,
\eqa\label{add1.26}
&&y_j^0(\bx^0)=\rho_j,\quad
 j=1,\cdots,k,\nn\\
&&y_s^0(\bx^0)=\rho_k\rho_s,\quad s=k+1,\cdots,l-2,\nn\\
&&y_{l-1}^0(\bx^0)=\frac12\sqrt{\rho_k}\left(\rho_l+\rho_{l-1}\right),
\quad y_l^0(\bx^0)=\frac12\sqrt{\rho_k}\left(\rho_l-\rho_{l-1}\right)\nn
\eeqa
where the functions $\rho_j$ are given by
\eqa\label{add1.27}
&&\rho_j=\sigma_j(e^{2\pi i v_1^0},\cdots,e^{2\pi i v_k^0}),\quad j=1,\cdots,k,\nn\\
&&\rho_s=\sigma_{s-k}(\xi_{k+1}^0,\cdots,\xi_l^0), \quad s=k+1,\cdots,l-2,\nn\\
&&\rho_{l-1}=\prod_{s=k+1}^l\left(e^{i\pi v_s^0}+e^{-i\pi v_s^0}\right),
\quad \rho_l=\prod_{s=k+1}^l\left(e^{i\pi v_s^0}-e^{-i\pi v_s^0}\right)\nn
\eeqa
with
\eqa
&&\xi^0_m=e^{2\pi iv_m^0}+e^{-2\pi iv_m^0},\nn\\
&&v_1^0=x_1^0,\ v_m^0=x^0_m-x^0_{m-1},\quad m=2,\cdots,l-2,\nn\\
&&v_{l-1}^0=x_l^0+x_{l-1}^0-x^0_{l-2},\quad v_{l}^0=x^0_{l-1}-x_{l}^0.\nn
\eeqa
With these we obtain
\beq\label{add1.28}
\det\left(\dfrac{\p y_i^0(\bx^0)}{\p \rho_j}\right)=\frac12 (\rho_k)^{l-k-1}.
\eeq

\noindent{iv)} For the case $D_l$ with $k=l-1$ we have
\eqa\label{add1.29}
&&y_m^0(\bx^0)=\rho_m,\quad m=1,\cdots,l-2,\nn\\
&&y_{l-1}^0(\bx^0)={\rho_l},\quad
y_l^0(\bx^0)=\frac{\rho_{l-1}}{{\rho_l}},\nn
\eeqa
where the functions $\rho_j$ are defined by
\beq\label{add1.30}
\rho_j=\sigma_j(e^{2\pi i v_1^0},\cdots,e^{2\pi i v_l^0}),\quad j=1,\cdots,l-1,\quad \rho_l=\prod_{s=1}^le^{\pi i v_s^0}\nn
\eeq
with
\eqa
&& v_1^0=x_1^0, \ v_m^0=x_{m}^0-x_{m-1}^0,\quad 2\le m\le l-2,\nn\\
&& v_{l-1}^0=x^0_{l}+x_{l-1}^0-x_{l-2}^0,\  v_{l}^0=x^0_{l-1}-x_{l}^0.\nn
\eeqa
With these we obtain
\beq
\det\left(\dfrac{\p y_i^0(\bx^0)}{\p \rho_j}\right)=-\frac1{ \rho_l}.
\eeq

\noindent{v)} For the case $D_l$ with $k=l$ the functions $y_j^0(\bx^0)$ and
$\rho_j$ are defined in the same way as in the above case iv), except
$$
y_{l-1}^0(\bx^0)=\frac{\rho_{l-1}}{ {\rho_l}},\quad
y_{l}^0(\bx^0)= {\rho_l},\quad
v^0_l=x_{l}^0-x_{l-1}^0.
$$
With these we obtain
\beq
\det\left(\dfrac{\p y_i^0(\bx^0)}{\p \rho_j}\right)=\frac1{\rho_l}.
\eeq

From the above calculation of the Jacobian $\det\left(\frac{\pal y^0_i({\bf x}^0)}{\pal \rho_j}\right)$
and from the algebraic independence of the functions $\rho_1,\dots,\rho_l$ we deduce the algebraic independence
of the functions $y_1^0(\bx^0)$,$\cdots$,$y_l^0(\bx^0)$.
This completes the proof of the theorem.
$\quad$\hfill \raisebox{0.11truecm}{\fbox{}}

\vskip0.4truecm

\section{Frobenius manifold structures on the orbit space of $\widetilde{W}^{(k)}(C_l)$}\label{sec-3}
%%%%%%%%%%%%%%%%%%%%%%%%%%%%%%%%%%%%%%%%%%%%%%%%%%%%%
\subsection{Flat pencils of metrics on the orbit space of ${\widetilde{W}}^{(k)}(C_l)$}

Let $\mathcal{M}$ be the orbit space defined as ${\rm Spec}{\mathcal
A}$ of the extended affine Weyl group $\widetilde{W}^{(k)}(C_l)$ for
any fixed $1\le k\le l$. Following \cite{DZ1998} we define an indefinite
metric $(~,~)^{\sptilde}$ on
\ian{
$\widetilde{V}_\mathbb{C} = \widetilde{V} \otimes_\mathbb{R} \mathbb{C}$
where}
$\widetilde V$ is the orthogonal direct sum of $V$ and $\mathbb
R$. Here ${V}$ is endowed with the $W$-invariant Euclidean metric
\beq (d x_s,
dx_n)^{\sptilde}=\frac{s}{4\pi^2},\quad 1\le s\le n\le l \label{DS3.1} \eeq
and $\mathbb R$ is endowed with the metric \beq
(dx_{l+1},dx_{l+1})^{\sptilde}=-\frac1{4 k
\pi^2}. \label{DS3.2} \eeq
The set of generators for the ring ${\mathcal A}={\mathcal
A}^{(k)}(C_l)$ are defined by (\ref{ip-a}), (\ref{ip-b}),
(\ref{add1.5}) with $\gamma=1$. They form a system of global coordinates on
${\mathcal M}$. We now introduce a system of
local coordinates on ${\mathcal M}$ as follows
\beq\label{zh9}
y^1=\ty_1,\dots, y^l=\ty_l,\ y^{l+1}=\log \ty_{l+1}=2 \pi i
x_{l+1}.
\eeq
\ian{They live on the universal covering $\widetilde{\mathcal M}$ of
${\mathcal M}\setminus\{\ty_{l+1}=0\}$. The projection
\beq
\Pr: \widetilde V\to \widetilde{\mathcal M}
\eeq}
induces a symmetric
bilinear form on $T^{*}{\mathcal{M}}$
\begin{equation}
(d y^i,d y^j)^{\sptilde}\equiv
g^{ij}(y):=\dsum_{a,b=1}^{l+1}\dfrac{\p y^i}{\p x_a}\dfrac{\p
y^j}{\p x_b}(dx_a,dx_b)^{\sptilde}.\label{bei-5}
\end{equation}

{\begin{prop} \label{zh-9}
The functions $g^{ij}(y)$ and $\Gamma^{ij}_n(y)$ defined by
\eqref{bei-5} and
\beq\label{zh-3}
\sum_{n=1}^{l+1} \Gamma_n^{ij}(y)dy^n=\sum_{p, q, r=1}^{l+1} \dfrac{\p y^i}{\p x_p}\dfrac{\p^2 y^j}{\p x_q \p x_r}(dx_p, dx_q)^\thicksim dx_r
\eeq
are weighted homogeneous
polynomials in ${y}^1,\cdots$,~$y^l$, $e^{y^{l+1}}$ of the degree
\eqa
&&\deg g^{ij}(y)=\deg y^i+\deg y^j,\\
&&\deg \Gamma^{ij}_n(y)=\deg y^i+\deg y^j-\deg y^n,
\eeqa
where $\deg y^j=d_j$ and $\deg y^{l+1}=d_{l+1}=0$.
\end{prop}
\pf The proposition follows from
Theorem \ref{thm1}.
\epf
From the above proposition we know that $\det(g^{ij})$ is a polynomial of
$y^1, \dots, y^l$ and $e^{y^{l+1}}$, so the discriminant $\Sigma$ of the extended affine Weyl group is an algebraic subvariety of $\mathcal M$. It was shown in \cite{DZ1998} that $\Sigma$ is the $\Pr$-image of the hyperplanes
\begin{equation}
\{(\bx,x_{l+1})|(\beta,\bx)=r\in\mathbb{Z},\ x_{l+1}=\text
{arbitrary}\}, \quad \beta\in \Phi^+,\label{add3.7}
\end{equation}
where $\Phi^+$ is the set of all positive roots.
The matrix $(g^{ij})$ is invertible on ${\mathcal M}\setminus \Sigma$ and the inverse matrix
$(g^{ij})^{-1}$ defines a flat metric on ${\mathcal M}\setminus \Sigma$.
The functions $\Gamma^{ij}_n(y)$  defined by \eqref{zh-3} coincide with the contravariant components
\[-\sum_{s=1}^{l+1} g^{is}(y) \Gamma^j_{sn}(y)\] of the
Levi-Civita connection of $(g^{ij})$ on ${\mathcal M}\setminus \Sigma$.
}

We now proceed to look
for other flat metrics on a certain subvariety in $\mathcal M$ that are compatible with the
metric $(g^{ij})^{-1}$.
To this end,
let us introduce the following new coordinates on ${\mathcal M}$:
\beq\label{zh-19}
\ta^j=\left\{\begin{array}{ll}e^{k y^{l+1}},&j=0,\\
y^j e^{(k-j) y^{l+1}}, &j=1,\cdots,k-1,\\
y^j,  &j=k,\cdots,l
\end{array}\right.
\eeq and denote
\beq \om_j=2\pi
i(x_j-x_{j-1}),\quad \om_{l+1}=y^{l+1}=2\pi ix_{l+1}, \quad j=1,\cdots,l.
\eeq
In the coordinates $\om_1,\dots, \om_{l+1}$ the indefinite metric on
$\widetilde V$ has the form
\beq \left((d \om_i,d \om_j)^{\sptilde} \right)=\hbox{diag}(-1,\dots,-1,\frac1{k}). \label{DS3.12}\eeq Define
\begin{equation}
P(u):=\dsum_{j=0}^{l} u^{l-j}\theta^j=e^{k \om_{l+1}}
\prod_{j=1}^{l}(u+\xi_j).\label{P1}
\end{equation}
We can easily verify that the function $P(u)$ satisfies \eqa
&&\dfrac{\p P(u)}{\p
\om_a}=\dfrac{1}{u+\xi_a}P(u)(e^{\om_a}-e^{-\om_a}),\quad
1\leq a \leq l;\label{eqp-a}\\
&&\dfrac{\p P(u)}{\p \om_{l+1}}=k P(u), \quad P'(u):=\dfrac{\p
P(u)}{\p u}=P(u)\dsum_{a=1}^{l}\frac{1}{u+\xi_a}.\label{eqp-b}
\eeqa

\begin{lem}\label{add3.2}
The following formulae hold true for the generating functions of the
metric $(g^{ij})$ and the contravariant components of its
Levi-Civita connection $\Gamma^{ij}_k$ in the coordinates $\ta^0,\dots,
\ta^{l}$:
\eqa
&&\dsum_{i,j=0}^{l}(d\ta^i,d\ta^j)^{\sptilde}u^{l-i}v^{l-j}=(d P(u), d P(v))^{\sptilde}\nn\\
&&
=(k-l)P(u)P(v)+\dfrac{u^2-4}{u-v}P'(u)P(v)-\dfrac{v^2-4}{u-v}P(u)P'(v),~~~~\label{gg}
\eeqa
\vskip -0.75truecm
\eqa && {\dsum_{i,j,r=0}^{l}\Gamma^{ij}_r(\ta)
d\ta^r u^{l-i}v^{l-j}=\dsum_{a,b,r=1}^{l+1}\dfrac{\p P(u)}{\p
\om_a}\dfrac{\p^2 P(v)}{\p \om_b \p
\om_r}d\om_r(d\om_a,d\om_b)^{\sptilde}}\nn\\
&&\quad =(k-l)P(u)dP(v)+\dfrac{u^2-4}{u-v}P'(u)dP(v)-\dfrac{v^2-4}{u-v}P(u)dP'(v)\nn\\
&&\qquad+\dfrac{uv-4}{(u-v)^2}P(v)dP(u)-\dfrac{uv-4}{(u-v)^2}P(u)dP(v).
\label{gamma}
\eeqa
 {Here
$\Gamma^{ij}_r(\theta)$ are the contravariant components
of the Levi-Civita connection of $(g^{ij})$ represented in the coordinates $\theta^0, \theta^1, \dots, \theta^l$.}
\end{lem}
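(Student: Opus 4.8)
The first equality in \eqref{gg} is a tautology of bilinearity: since $P(u)=\sum_{i=0}^{l}\theta^i u^{l-i}$ and the spectral parameter $u$ does not involve the $x^a$, one has $\p P(u)/\p x^a=\sum_i(\p\theta^i/\p x^a)u^{l-i}$, so expanding $(dP(u),dP(v))^{\sptilde}=\sum_{a,b}(\p P(u)/\p x^a)(\p P(v)/\p x^b)(dx^a,dx^b)^{\sptilde}$ term by term reproduces $\sum_{i,j}(d\theta^i,d\theta^j)^{\sptilde}u^{l-i}v^{l-j}$. The content of \eqref{gg} is thus the closed form on the right, and the plan is to evaluate $(dP(u),dP(v))^{\sptilde}$ in the orthogonal coordinates $\om_1,\dots,\om_{l+1}$, where by \eqref{DS3.12} the cometric is $\mathrm{diag}(-1,\dots,-1,\tfrac1k)$. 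Substituting \eqref{eqp-a}--\eqref{eqp-b} and using $(e^{\om_a}-e^{-\om_a})^2=\xi_a^2-4$ gives
\[
(dP(u),dP(v))^{\sptilde}=kP(u)P(v)-P(u)P(v)\sum_{a=1}^{l}\frac{\xi_a^2-4}{(u+\xi_a)(v+\xi_a)},
\]
so the statement reduces to the rational identity
\[
\sum_{a=1}^{l}\frac{\xi_a^2-4}{(u+\xi_a)(v+\xi_a)}=l-\frac{u^2-4}{u-v}\,\frac{P'(u)}{P(u)}+\frac{v^2-4}{u-v}\,\frac{P'(v)}{P(v)},
\]
which I would prove by combining $\tfrac1{(u+\xi_a)(v+\xi_a)}=\tfrac1{v-u}\big(\tfrac1{u+\xi_a}-\tfrac1{v+\xi_a}\big)$, the division $\tfrac{\xi^2-4}{u+\xi}=\xi-u+\tfrac{u^2-4}{u+\xi}$, and $P'(u)/P(u)=\sum_{a}(u+\xi_a)^{-1}$ from \eqref{eqp-b}. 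Back-substitution then yields \eqref{gg}.

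For \eqref{gamma} the first equality is simply the general formula for the contravariant Christoffel symbols of a metric induced by orthogonal projection from a flat space with a constant metric --- the formula used in the proof of the preceding Proposition --- now written in the linear coordinates $\om_1,\dots,\om_{l+1}$ on $\tv$ (which are affine functions of the $x^a$, hence again flat coordinates for the constant cometric \eqref{DS3.12}) and in the coordinates $\theta^0,\dots,\theta^l$ on $\mathcal M$; multiplying by $u^{l-i}v^{l-j}$, summing over $i,j,r$ and using $P(u)=\sum_i\theta^i u^{l-i}$ turns it into the displayed contraction. The real work is again the second equality, and the plan is to first compute the Hessian $\p^2P(v)/\p\om_b\p\om_c$ from \eqref{eqp-a}--\eqref{eqp-b}: for $b\ne c\le l$ it is $(e^{\om_b}-e^{-\om_b})(e^{\om_c}-e^{-\om_c})P(v)/[(v+\xi_b)(v+\xi_c)]$, for $c=l+1,\ b\le l$ it is $k\,\p P(v)/\p\om_b$, one has $\p^2P(v)/\p\om_{l+1}^2=k^2P(v)$, and, using $\xi_b^2-(e^{\om_b}-e^{-\om_b})^2=4$, the diagonal entry collapses to $\p^2P(v)/\p\om_b^2=\xi_b P(v)/(v+\xi_b)$. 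Feeding these into the contraction, the $\om_{l+1}$-block yields exactly $kP(u)\,dP(v)$, while the $V$-block equals $-\sum_{a=1}^{l}(\p P(u)/\p\om_a)\,d\!\big(\p P(v)/\p\om_a\big)$; expanding $d\!\big(\p P(v)/\p\om_a\big)$ and reducing by the same partial-fraction identity together with the two logarithmic identities $\sum_{a}d\xi_a/(u+\xi_a)=dP(u)/P(u)-k\,d\om_{l+1}$ (from $\log P(u)=k\om_{l+1}+\sum_a\log(u+\xi_a)$, see \eqref{P1}) and $\sum_{a}d\xi_a/(v+\xi_a)^2=-d\big(P'(v)/P(v)\big)$, the two blocks combine to the right-hand side of \eqref{gamma}, the terms proportional to $\tfrac{v^2-4}{u-v}\tfrac{P'(v)}{P(v)}P(u)\,dP(v)$ cancelling in the process.

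I expect the main obstacle to be the bookkeeping in the $V$-block of \eqref{gamma}: the diagonal Hessian entries produce double poles $(v+\xi_a)^{-2}$, and to handle them one splits $\tfrac{\xi_a v+4}{(v+\xi_a)^2}=\tfrac{v}{v+\xi_a}-\tfrac{v^2-4}{(v+\xi_a)^2}$, uses $\tfrac1{(u+\xi)(v+\xi)^2}=\tfrac1{(v-u)^2}\big(\tfrac1{u+\xi}-\tfrac1{v+\xi}\big)+\tfrac1{u-v}\tfrac1{(v+\xi)^2}$, and recognizes $\sum_a d\xi_a/(v+\xi_a)^2=-d\big(P'(v)/P(v)\big)$ to extract the $dP'(v)$ term, while the coefficient $\tfrac{uv-4}{(u-v)^2}$ emerges from the elementary simplification $\tfrac{v}{v-u}-\tfrac{v^2-4}{(v-u)^2}=\tfrac{4-uv}{(u-v)^2}$. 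Everything else --- the identity \eqref{gg}, the $\om_{l+1}$-block of \eqref{gamma}, and the two logarithmic identities --- is routine, so once this reorganization is carried out carefully the Lemma follows.
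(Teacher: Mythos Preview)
Your approach is correct and coincides with the paper's. For \eqref{gg} your computation is literally the paper's proof: pass to the $\om$-coordinates, use \eqref{eqp-a}--\eqref{eqp-b} to get $kP(u)P(v)-P(u)P(v)\sum_a\frac{\xi_a^2-4}{(u+\xi_a)(v+\xi_a)}$, then split each summand by partial fractions and recognize $P'/P$. For \eqref{gamma} the paper simply says ``the second formula can be proved in the same way,'' so your detailed Hessian-and-logarithmic-identity scheme is a correct fleshing-out of that remark rather than a different route.
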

\pf  By using \eqref{eqp-a} and \eqref{eqp-b}, we have
\begin{equation*}
\begin{array}{rl}
&(dP(u),dP(v))^{\sptilde}=\dfrac{1}{k}\dfrac{\p P(u)}{\p \om_{l+1}}\dfrac{\p P(v)}{\p
\om_{l+1}}-\dsum_{a=1}^l\dfrac{\p P(u)}{\p \om_a}\dfrac{\p
P(v)}{\p \om_a}\\
&=k P(u)P(v)-\dsum_{a=1}^l P(u)P(v)\frac{\xi_a^2-4}{(u+\xi_a)(v+\xi_a)}\\
&=k P(u) P(v) -\dsum_{s=1}^l P(u)P(v)\left(1-\frac{u^2-4}{u-v} \frac1{u+\xi_a}
+\frac{v^2-4}{u-v} \frac1{v+\xi_a}\right)\nn\\
&=(k-l)P(u)P(v)+\dfrac{u^2-4}{u-v}P'(u)P(v)-\dfrac{v^2-4}{u-v}P(u)P'(v).
\end{array}
\end{equation*}
So we proved the first formula, the second formula can be proved
in the same way. The lemma is proved.
\epf
The above lemma shows
that in the coordinates $\ta^0,\dots, \ta^{l}$ the functions
$g^{ij}(\ta)$ are quadratic polynomials, and the contravariant
components $\Gamma^{ij}_s$ are homogeneous linear
functions\footnote{These metrics give rise to a quadratic Poisson
structure on the space of ``loops" $\{S^1\to M\}$ (see \cite{Du2} for the details):
\beq
\{\theta^i(a), \theta^j(b)\}=g^{ij}(\theta(a))\delta'(a-b)+\Gamma_s^{ij}(\theta(a))
\theta_a^s\delta(a-b).\nn
\eeq
We plan to study such important  class of quadratic metrics and Poisson structures
in a separate publication.}. To find flat metrics that are compatible with this quadratic
metric $g^{ij}(\ta)$, we need the following lemma.
\begin{lem}\label{lem-du}
If there is a set of constants $\{c_0,\dots, c_l\}$ such that

(i)\, the functions
\eqa
&&g^{ij}(\theta^0+c_0\lm,\theta^1+c_1\lm,\dots,\theta^l+c_l\lm),\nn\\
&&\Gamma^{ij}_s(\theta^0+c_0\lm,\theta^1+c_1\lm,\dots,\theta^l+c_l\lm)\nn
\eeqa are linear in the parameter $\lm$ for $1\le i,j,s\le l+1$, and

(ii)\, the matrix $(\eta^{ij})$ with \beq \eta^{ij}={\mathcal L}_{e}
g^{ij},\quad e=\sum_{j=0}^l c_j \frac{\pal}{\pal \theta^j} \label{DS3.18}\eeq is
nondegenerate on certain open subset ${\mathcal U}$ of ${\mathcal M}$.

Then the metrics $(g^{ij}), (\eta^{ij})$  form a flat pencil, i.e.,
the linear combination $(g^{ij}+\lm \eta^{ij})$ yields a flat metric on ${\mathcal U}$
for any $\lm$ satisfying $\det(g^{ij}+\lm \eta^{ij})\ne 0$, and the
contravariant components of the Levi-Civita connection for this
metric equal to
\beq \Gamma^{ij}_s+\lm\, \gamma^{ij}_s.
\eeq
Here
$\gamma^{ij}_s$ are the contravariant components of the Levi-Civita
connection for the metric $(\eta^{ij})$ which can be evaluated by
$\gamma^{ij}_s={\mathcal L}_e \Gamma^{ij}_s$.
\end{lem}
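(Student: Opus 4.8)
The plan is to derive the flatness of the pencil $(g^{ij}+\lambda\eta^{ij})$ from the general theory of flat pencils of metrics, as developed in the work of Dubrovin on Frobenius manifolds. The key structural fact is that $(g^{ij})$ together with its Levi-Civita connection coefficients $\Gamma^{ij}_s$, already known from Lemma \ref{add3.2} to be polynomial (indeed quadratic and linear respectively) in the coordinates $\theta^0,\dots,\theta^l$, behaves well under the shift $\theta^j\mapsto\theta^j+c_j\lambda$. Hypothesis (i) says precisely that this shift produces an affine dependence on $\lambda$, so we may write
\begin{equation}
g^{ij}(\theta+c\lambda)=g^{ij}(\theta)+\lambda\,\eta^{ij}(\theta),\qquad
\Gamma^{ij}_s(\theta+c\lambda)=\Gamma^{ij}_s(\theta)+\lambda\,\gamma^{ij}_s(\theta),
\end{equation}
where by Taylor expansion $\eta^{ij}=\mathcal L_e g^{ij}$ and $\gamma^{ij}_s=\mathcal L_e\Gamma^{ij}_s$ with $e=\sum c_j\partial/\partial\theta^j$ as in \eqref{DS3.18}. (The higher Taylor coefficients vanish by hypothesis (i).) Thus the one-parameter family $g^{ij}+\lambda\eta^{ij}$ is obtained from the single metric $g^{ij}$ by the $\lambda$-dependent affine change of coordinates $\theta^j\mapsto\theta^j+c_j\lambda$.

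The main point is then that this affine change of coordinates is an \emph{isometry up to relabeling}: since $g^{ij}(\theta+c\lambda)$ is, as a tensor field, the pullback of $g^{ij}(\theta)$ under a translation (which has trivial Jacobian), the metric $g^{ij}+\lambda\eta^{ij}$ is for each fixed $\lambda$ just the flat metric $(g^{ij})^{-1}$ viewed in shifted coordinates, hence flat wherever it is nondegenerate; and its Levi-Civita connection, being likewise the pullback of the Levi-Civita connection of $g$, has contravariant components $\Gamma^{ij}_s(\theta+c\lambda)=\Gamma^{ij}_s+\lambda\gamma^{ij}_s$. This simultaneously shows that $\gamma^{ij}_s=\mathcal L_e\Gamma^{ij}_s$ gives the Levi-Civita connection of $\eta^{ij}$: differentiating the connection identity once in $\lambda$ and setting $\lambda=0$, and using that the $\lambda$-derivative of $\Gamma^{ij}_s(\theta+c\lambda)$ is constant in $\lambda$, yields that $\gamma^{ij}_s$ are exactly the connection coefficients for $(\eta^{ij})$. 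Nondegeneracy of $(\eta^{ij})$ on the open set $\mathcal U$ is hypothesis (ii), so $\eta$ is a genuine (flat) metric there. Finally, the fact that \emph{both} endpoints $g$ and $\eta=g'|_{\lambda=0}$, as well as every intermediate member, are flat with connection $\Gamma+\lambda\gamma$ linear in $\lambda$, is precisely the definition of a flat pencil; one may also invoke the standard criterion (see \cite{Du2}) that a pair of flat metrics whose sum is flat and whose connections add is automatically a flat pencil.

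I expect the only real subtlety to be bookkeeping rather than genuine difficulty: one must be careful that hypothesis (i) is imposed for the full range $1\le i,j,s\le l+1$, i.e.\ in the original $y$-coordinates (equivalently the $\theta$-coordinates, related by an invertible transformation independent of $\lambda$), so that the shift $\theta\mapsto\theta+c\lambda$ is a well-defined change of coordinates on all of $\mathcal M$ and not merely a formal operation on the polynomial expressions; then the pullback argument goes through verbatim. The identification $\gamma^{ij}_s=\mathcal L_e\Gamma^{ij}_s$ and the claim that these are the connection coefficients of $\eta$ should be checked by the one-line Lie-derivative computation indicated above, using that $\mathcal L_e$ commutes with the operations (contraction, differentiation) that build $\Gamma$ from $g$. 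No hard estimate or delicate limiting argument is needed; everything reduces to the translation-invariance of the Levi-Civita construction together with the affine-in-$\lambda$ hypothesis.
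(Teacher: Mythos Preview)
Your argument is correct and is precisely the one the paper defers to: the paper's own ``proof'' consists entirely of the reference to Appendix~D of \cite{Du1}, and what you have written is essentially the content of that appendix --- the pencil $g+\lambda\eta$ is the original flat metric $g$ pulled back along the translation $\theta\mapsto\theta+c\lambda$, so it is automatically flat with Levi--Civita connection $\Gamma+\lambda\gamma$. The one place where your exposition is slightly loose is the claim that $\gamma^{ij}_s$ are the Levi--Civita coefficients of $\eta$ itself: ``differentiating in $\lambda$ at $\lambda=0$'' gives only a linearized identity, not the full nonlinear Levi--Civita relation for $\eta$; the clean way to finish is to rescale, noting that $\mu g+\eta$ is flat with connection $\mu\Gamma+\gamma$ for all $\mu=1/\lambda\neq0$, and then let $\mu\to0$ (or compare leading terms in $\lambda$).
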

\pf For the proof of this lemma, see Appendix D of \cite{Du1}.\epf

\begin{thm}\label{thm-du}
For any fixed integer $0\le m\le l-k$ there is a flat pencil of metrics $(g^{ij}), (\eta^{ij})$
on a certain open subset ${\mathcal U}$ of ${\mathcal M}$
with $(g^{ij})$ given by (\ref{bei-5}) and $\eta^{ij}={\mathcal L}_e g^{ij}$. Here the vector
field $e$ has the form
\beq\label{unity}
e:=\sum_{j=k}^{l} c_j \frac{\pal}{\pal \theta^j}=\sum_{j=k}^{l} c_j \frac{\pal}{\pal y^j}
\eeq
 {with the constants $c_k,\cdots, c_l$ defined by the generating function}
\beq
P_0(u)=\sum_{j=k}^l c_j u^{l-j} =(u+2)^m (u-2)^{l-k-m}.
\eeq
Explicitly,  $c_j=(-2)^{j-k}\dsum_{s=0}^{m}(-1)^{m-s}\binom{m}{s} \binom{l-k-m}{l-j-s}$ for $j=k,\cdots, l$.
\end{thm}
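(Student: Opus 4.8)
The plan is to verify the two hypotheses of Lemma \ref{lem-du} for the specific vector field $e$ defined by the generating function $P_0(u)=(u+2)^m(u-2)^{l-k-m}$, and then to invoke that lemma to conclude that $(g^{ij})$ and $(\eta^{ij})={\mathcal L}_e g^{ij}$ form a flat pencil on a suitable open set ${\mathcal U}$. The key observation that makes hypothesis (i) tractable is the generating-function identity \eqref{gg}--\eqref{gamma} of Lemma \ref{add3.2}: since $e=\sum_{j=k}^l c_j \p/\p\theta^j$, the shift $\theta^j\mapsto\theta^j+c_j\lambda$ for $j=k,\dots,l$ (with $\theta^0,\dots,\theta^{k-1}$ unchanged) corresponds at the level of generating functions to $P(u)\mapsto P(u)+\lambda\,P_0(u)$, because $P_0(u)=\sum_{j=k}^l c_j u^{l-j}$ has degree exactly $l-k$ in $u$ and so only perturbs the coefficients of $u^{l-k},\dots,u^0$, i.e. $\theta^k,\dots,\theta^l$. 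First I would substitute $P\mapsto P+\lambda P_0$ into the right-hand sides of \eqref{gg} and \eqref{gamma} and expand in $\lambda$: the $\lambda^2$ term in \eqref{gg} is the quadratic expression in $P_0$ obtained by replacing both $P(u)$ and $P(v)$ by $P_0(u)$, $P_0(v)$, namely
\[
(k-l)P_0(u)P_0(v)+\frac{u^2-4}{u-v}P_0'(u)P_0(v)-\frac{v^2-4}{u-v}P_0(u)P_0'(v).
\]
The heart of the argument is to show this $\lambda^2$ coefficient \emph{vanishes identically}, and similarly that the $\lambda^2$ (and higher) terms coming from \eqref{gamma} vanish; this is exactly where the special form $(u+2)^m(u-2)^{l-k-m}$ is used — indeed $P_0'(u) = P_0(u)\bigl(\tfrac{m}{u+2}+\tfrac{l-k-m}{u-2}\bigr)$, so $(u^2-4)P_0'(u) = P_0(u)\bigl(m(u-2)+(l-k-m)(u+2)\bigr)$ is $P_0(u)$ times a linear polynomial in $u$, and a direct symmetry computation shows the displayed expression collapses to zero. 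I would record this as the crucial lemma: $(u^2-4)P_0'(u)P_0(v)-(v^2-4)P_0(v)P_0'(u) = (u-v)(k-l)P_0(u)P_0(v)$, which is the identity $m(u-2)+(l-k-m)(u+2) - \bigl(m(v-2)+(l-k-m)(v+2)\bigr) = (u-v)(l-k) = -(u-v)(k-l)$ after factoring out $P_0(u)P_0(v)$ — here one uses $l-(k+m+(l-k-m))\cdot\!$ bookkeeping, i.e. that the degree of $P_0$ is $l-k$. This gives hypothesis (i): the generating functions, hence all the individual $g^{ij}$ and $\Gamma^{ij}_s$, are at most linear in $\lambda$. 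The analogous check for \eqref{gamma} is the same computation with one factor replaced by its differential, so it follows from the same identity.

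Next I would establish hypothesis (ii), nondegeneracy of $(\eta^{ij})={\mathcal L}_e g^{ij}$ on some nonempty open ${\mathcal U}\subset{\mathcal M}$. From the $\lambda^1$ coefficient of \eqref{gg} one reads off the generating function of $\eta^{ij}$ directly:
\[
\sum_{i,j=0}^l \eta^{ij}(\theta)u^{l-i}v^{l-j}
= 2(k-l)P(u)P_0(v)\big|_{\mathrm{sym}} + \Bigl(\tfrac{u^2-4}{u-v}\bigl(P_0'(u)P(v)+P'(u)P_0(v)\bigr)-(u\leftrightarrow v)\Bigr),
\]
(writing it symmetrically in the pair $(u,P)\leftrightarrow(v,P)$). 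Since $\det(\eta^{ij})$ is a polynomial in the $\theta$'s (equivalently in $y^1,\dots,y^l,e^{y^{l+1}}$ by Theorem \ref{thm1}), it suffices to exhibit a single point where it is nonzero; I would do this by evaluating at a convenient configuration — e.g. specializing the $\xi_a$ so that $P(u)$ factors nicely, or passing to the leading term in an appropriate filtration — and checking the resulting determinant is a nonzero constant (the computation will show $\det(\eta^{ij})$ is proportional to a power of $t^{l-m}$ times a power of $t^l$, consistent with the locus $\{t^{l-m}=0\}\cup\{t^l=0\}$ appearing in the Main Theorem, where $P_0$ has its double-type structure degenerating). The main obstacle I anticipate is precisely this nondegeneracy step: unlike (i), which is a clean algebraic identity forced by the factorized form of $P_0$, showing $\det(\eta^{ij})\not\equiv 0$ requires either an explicit change to flat-type coordinates for $\eta$ (the coordinates $t^1,\dots,t^{l+1}$ promised in the Main Theorem, constructed later in Theorem \ref{DZZ-thm4.4}) or a careful degeneration argument; one must also rule out that the chosen $m$ makes $\eta$ degenerate everywhere, which is where the bound $0\le m\le l-k$ and the interplay between the two roots $u=\pm 2$ enter.

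Finally, with (i) and (ii) in hand, Lemma \ref{lem-du} applies verbatim and yields the flat pencil $(g^{ij}),(\eta^{ij})$ on ${\mathcal U}$, together with $\gamma^{ij}_s={\mathcal L}_e\Gamma^{ij}_s$ for the Levi-Civita connection of $\eta$. The remaining assertion of the theorem — the closed formula $c_j=(-2)^{j-k}\sum_{s=0}^m(-1)^{m-s}\binom{m}{s}\binom{l-k-m}{l-j-s}$ — is a routine extraction of the coefficient of $u^{l-j}$ from $(u+2)^m(u-2)^{l-k-m}$ via the binomial theorem, so I would state it and leave the one-line verification to the reader. In summary: the proof is (a) the algebraic identity that $P\mapsto P+\lambda P_0$ leaves \eqref{gg}, \eqref{gamma} affine in $\lambda$, which is equivalent to $(u^2-4)P_0'(u)/P_0(u)$ being affine-linear in $u$ with the correct slope $l-k$ — automatic for $P_0=(u+2)^m(u-2)^{l-k-m}$; (b) a nondegeneracy check for $\eta$ at one point; (c) citing Lemma \ref{lem-du}; (d) the binomial coefficient bookkeeping.
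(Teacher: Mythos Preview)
Your proposal is correct and follows essentially the same route as the paper: verify the two hypotheses of Lemma~\ref{lem-du} via the generating-function identities \eqref{gg}--\eqref{gamma}, then invoke that lemma. Two points of comparison are worth noting. First, for hypothesis~(i) you \emph{verify} that the given $P_0=(u+2)^m(u-2)^{l-k-m}$ kills the $\lambda^2$-coefficient, while the paper runs the logic in reverse: it imposes vanishing of the $\lambda^2$-term as the functional equation
\[
(k-l)P_0(u)P_0(v)+\frac{u^2-4}{u-v}P_0'(u)P_0(v)-\frac{v^2-4}{u-v}P_0(u)P_0'(v)=0,
\]
separates variables, integrates, and shows that the polynomial solutions are exactly $a(u+2)^m(u-2)^{l-k-m}$ with $0\le m\le l-k$. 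This buys a uniqueness statement you do not get. Second, for hypothesis~(ii) the paper is considerably more explicit than your single-point/degeneration sketch: it introduces a linear change of coordinates $\tau^1,\dots,\tau^{l+1}$ via a generating-function relation (so that $e=\partial/\partial\tau^k$), computes $(\eta^{ij}(\tau))$ as an explicit block matrix, and reads off
\[
\det(\eta^{ij})=(-1)^l k^{k-1}4^{l-k}m^m(l-k-m)^{l-k-m}(\tau^{l-m})^{l-k-m}(\tau^l)^m,
\]
thereby identifying the degeneracy locus exactly as $\{\tau^{l-m}=0\}\cup\{\tau^l=0\}$ rather than merely asserting nonemptiness of~${\mathcal U}$. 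Your remark that $dP_0=0$ makes the $\lambda^2$-check for \eqref{gamma} automatic is correct.
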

\pf Firstly we want to find the constants $c_0,\dots, c_l$
satisfying the condition $(i)$ in Lemma \ref{lem-du}. It suffices to find
a polynomial $P_0(u)=\dsum_{j=0}^l c_j u^{l-j}$ such that after the
shift
\beq P(u)\mapsto P(u)+\lm P_0(u),\quad P(v)\mapsto P(v)+\lm
P_0(v), \nn \eeq
the right hand side of \eqref{gg} and \eqref{gamma} are linear
in $\lm$. This yields that $P_0(u)$ and $P_0(v)$ must satisfy
\beq (k-l)P_0(u)P_0(v)+\dfrac{u^2-4}{u-v}P_0'(u)P_0(v)-\dfrac{v^2-4}{u-v}P_0(u)P_0'(v)=0.
\label{eq3.20} \eeq
Separating the variables and integrating one obtains
$$
P_0(u) = a \left( \frac{u-2}{u+2}\right)^b \left[(u-2)(u+2)\right]^{\frac{l-k}2}= (u-2)^{\frac{l-k}2+b}(u+2)^{\frac{l-k}2 -b}
$$
for some constants $a$, $b$. This is a polynomial \emph{if and only if} $m:=\frac{l-k}2 -b$ is a non-negative integer.
Hence any polynomial solution to eq. \eqref{eq3.20} must have the form $P_0(u)=a(u+2)^m (u-2)^{l-k-m}$
for an integer where $0\le m\le l-k$.  Thus, up to a common factor,
the constants $c_0,\dots, c_l$ are determined by
\beq\sum_{j=0}^l c_j u^{l-j}=(u+2)^m (u-2)^{l-k-m}.\nn\eeq
Actually, by comparing the degrees of $u$, we know $c_j=0$ for $j=0,\cdots,k-1$.

Next we want to check the condition $(ii)$ in Lemma \ref{lem-du}. In order to do this, taking any fixed integer
$0\le m\le l-k$ we consider the following linear change of coordinates
$$
(y^1,\dots,y^{l+1})\mapsto (\tau^1,\dots,\tau^{l+1})
$$
defined by the relations $\tau^{l+1}=y^{l+1}$ and
 {\eqa
&&\sum_{j=0}^l \theta^j u^{l-j}=\sum_{j=0}^{l-m}\varpi^{{j}}
\left( u+2 \right) ^{m} \left( u-2 \right)^{l-m-j}\nn\\
&&\qquad \qquad\quad -\sum _{j=l-m+1}^{l}\varpi^{{j}} \left( u+2 \right) ^{l -j}
\left( u-2 \right) ^{j-k-1},\label{yz}
\eeqa}
where
\beq \varpi^j=\left\{\begin{array}{ll}e^{k\, \tau^{l+1}},&j=0,\\
\tau^j e^{(k-j) \tau^{l+1}}, &j=1,\cdots,k-1,\\
\tau^j,  &j=k,\cdots,l.
\end{array}\right. \label{add3.24}
\eeq
Then,
$$\sum_{j=0}^l\frac{\p \theta^j}{\p \tau^k}u^{l-j}=(u+2)^m(u-2)^{l-k-m}=\sum_{j=0}^l c_j u^{l-j}.$$
This means that in terms of the new coordinates $\tau^i$ the vector field $e$ defined in (\ref{unity})
has the expression
\beq e=\sum_{j=0}^l\frac{\p \theta^j}{\p \tau^k}\frac{\p}{\p \theta^j}
=\frac{\p}{\p \tau^k}.\nn
\eeq
Furthermore, observe that the left hand side of (\ref{yz}) coincides with the polynomial $P(u)$.
\ian{By substituting the expressions of  $P(u), P(v)$ given by the right
\ian{hand side of (\ref{yz}) into both sides of \eqref{gg}, we get }
an identity which on the right hand side has at most linear terms in $\tau^k$ due
to the definition of $P_0(u)$ and $P_0(v)$. Differentiating both sides by $\tau^k$ and dividing by $P_0(u)$ and $P_0(v)$, we obtain an identity \ian{relating two rational functions} in $u$ ($v$ is kept
as a parameter) with poles at $u = 2$ and $u =-2$. Comparing the regular parts
and the polar parts at $u = 2$ and at $u = -2$ we get explicit formulae for $\eta(d \varpi^i,d \varpi^j)$.
Finally, with the use of \eqref{add3.24} we obtain explicit formulae for the
matrix $({\eta}^{ij}(\tau))$ with entries }
\beq
{\eta}^{ij}(\tau)=\mathcal{L}_eg^{ij}(\tau)
\eeq
which has the block form
\beq\label{bdf-1}
\left(\begin{array}{ccccccccccccc}
~&W_1&\begin{array}{c}P_1\\ \vdots\\P_{k-1}\end{array}~&~&~&\\
P_1&\cdots P_{k-1}&P_k&0&0&1\\
~&~&0&W_2&0&0\\
~&~&0&0&W_3&0\\
~&~&1&0&0&0\\
\end{array}\right),
\eeq
where $W_i$ are triangular blocks
\beq\label{bdf-2}
W_1=\left(\begin{array}{cccccccc}
0&0&0&\cdots&0& k\\
0&0&0&\cdots&k&R_{1}\\
0&0&0&\cdots&R_1&R_{2}\\
\vdots&\vdots&\vdots&\vdots&\ddots&\vdots\\
k&R_{1}&R_2&\cdots&{}&R_{k-2}
\end{array}\right),\\
\eeq
\beq
W_2=\left(\begin{array}{cccc}
Q_1&Q_2&\cdots&Q_{l-k-m}\\
Q_2&Q_3&\cdots&0\\
\vdots&\vdots&\vdots&0\\
Q_{l-k-m}&0&\cdots&0
\end{array}\right),\quad
W_3=\left(\begin{array}{cccc}
S_1&S_2&\cdots&S_m\\
S_2&S_3&\cdots&0\\
\vdots&\vdots&\vdots&0\\
S_m&0&\cdots&0
\end{array}\right)\label{bdf-3}
\eeq
with entries
\eqa
&&R_{j}=4(k-j+1) \tau^{j-1} e^{\tau^{l+1}} +(k-j) \tau^j,\quad \tau^0=1,\nn\\
&&P_j=4(k-j+1) \tau^{j-1} e^{\tau^{l+1}},\nn\\
&&Q_s=4 s \tau^{k+s}+(1-\delta_{s,l-k-m}) (s+1) \tau^{k+s+1},\nn\\
&&S_r=4r\tau^{l-m+r}-4(1-\delta_{r,m}) r\tau^{l-m+r+1},\\
&&1\le j\le k,\quad 1\le r\le m,\quad 1\le s\le l-k-m.\nn
\eeqa
A simple computation gives
\beq\label{zh-4}
\det (\eta^{ij})=(-1)^l k^{k-1} 4^{l-k}m^m(l-k-m)^{l-k-m} (\tau^{l-m})^{l-k-m}{(\tau^l)}^m.
\eeq
\ian{So the metrix
$(\eta^{ij}(\tau))$ does not degenerate on ${\mathcal M}\setminus\{\tau^l=0\}\cup\{\tau^{l-m}=0\}$ when $m\ne 0, l-k$, and on ${\mathcal M}\setminus\{\tau^l=0\}$ when $m=0$ or $m=l-k$.} This completes the proof of the theorem.\epf

\begin{rem}\label{rem-1}
(1). The block $W_2$ or $W_3$ does not appears in the matrix (\ref{bdf-1}) when $m=l-k$ or $m=0$ (e.g.\cite{DZZ2005}).
(2). The flat pencil of metrics that corresponds to a fixed integer $m$ is equivalent to the one
that corresponds to the integer $l-k-m$, this is due to the fact that under replacement
$u\mapsto -u$ the polynomial $P_0(u)=(u+2)^m (u-2)^{l-k-m}$ is transformed to the
polynomial $(-1)^{l-k} (u+2)^{l-k-m} (u-2)^m$.
\end{rem}

\ian{\begin{rem}\label{zh-11}
 From \eqref{zh-4} it follows that the zero loci of $\det(\eta^{ij}(\tau))$ are given by that of $\tau^l, \tau^{l-m}$ when $m\ne 0, l-k$, and
by that of $\tau^l$ when $m=0$ or $m=l-k$.
By using \eqref{P1}, \eqref{add3.24} and \eqref{yz} we know that
\beq 4^m \tau^{l-m} =P(2)=4^l \widetilde{y}_{l+1}^k  \prod_{j=1}^l \cos^2\left(\pi(x_j-x_{j-1})\right)=4^l f_1
\nn\eeq
when $m\ne l-k$, and
\beq
-(-4)^{l-k-1}\tau^{l} =P(-2)=(-4)^l \widetilde{y}_{l+1}^k  \prod_{j=1}^l \sin^2\left(\pi(x_j-x_{j-1})\right)=(-4)^l f_2\nn
\eeq
when $m\ne 0$, here $f_1, f_2$ are defined in \eqref{zh-12}. We note that
$\tau^l=4^l f_1$ when $m=0$, and $\tau^l=-(-4)^{k+1} f_2$ when $m=l-k$.
\end{rem}}

\begin{cor}\label{zh-15}
In the coordinates $\tau^1,\dots, \tau^{l+1}$ the components $g^{ij}(\tau)$,
$\Gamma_m^{ij}(\tau)$ of the metric (\ref{bei-5}) and its Levi-Civita connection
are weighted homogeneous polynomials with degrees
\begin{equation}
\deg g^{ij}=d_i+d_j,\quad \deg \Gamma_s^{ij}(\tau)=d_i+d_j-d_s.
\end{equation}
They are at most linear in $\tau^k$. \ian{Here  $\deg \tau^j=d_j$ and $\deg \tau^{l+1}=d_{l+1}=0$.}
\end{cor}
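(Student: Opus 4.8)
The plan is to reduce the corollary to the Proposition stated at the beginning of this section — which gives the analogous statement in the $y$-coordinates — together with a careful analysis of the change of variables \eqref{yz} and of the vector field $e=\partial/\partial\tau^k$ recorded in the proof of Theorem \ref{thm-du}.

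First I would analyse the coordinate change $(y^1,\dots,y^{l+1})\leftrightarrow(\tau^1,\dots,\tau^{l+1})$. It factors as $y\mapsto\theta\mapsto\varpi\mapsto\tau$, where $\theta$ is obtained from $y$ and $\varpi$ from $\tau$ by the same exponential substitution, and where \eqref{yz} reads $\theta=A\varpi$ for a \emph{constant} invertible matrix $A$. The key remark is that the polynomial $q_j(u)=(u+2)^m(u-2)^{l-m-j}$ multiplying $\varpi^j$ in \eqref{yz} has degree $l-j$, so for $j\le k$ only $\varpi^0,\dots,\varpi^j$ occur in $\theta^j$, which equals the coefficient of $u^{l-j}$ in $\sum_s\varpi^s q_s(u)$; hence $A$, and therefore $A^{-1}$, is block lower-triangular with a lower-triangular top-left $(k+1)\times(k+1)$ corner. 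Pushing this through the two exponential substitutions one checks that each $\tau^i$, $1\le i\le l$, is a weighted-homogeneous polynomial of degree $d_i$ in $y^1,\dots,y^l,e^{y^{l+1}}$, that conversely each $y^i$ is a weighted-homogeneous polynomial of degree $d_i$ in $\tau^1,\dots,\tau^l,e^{\tau^{l+1}}$, and that $e^{y^{l+1}}=e^{\tau^{l+1}}$. In particular the two systems generate one and the same ring $\mathcal{R}:=\mathbb{C}[y^1,\dots,y^l,e^{y^{l+1}}]=\mathbb{C}[\tau^1,\dots,\tau^l,e^{\tau^{l+1}}]$, and the change of variables is weighted homogeneous with $\deg\tau^i=\deg y^i=d_i$ and $\deg e^{\tau^{l+1}}=1$. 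I expect this triangularity/polynomiality check to be the only genuinely delicate step: a priori the inverse Jacobians $\partial\tau^i/\partial y^a$ could carry negative powers of $e^{y^{l+1}}$, and it is exactly the structure of \eqref{yz} that rules this out.

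Granting the first step, polynomiality and weighted homogeneity in the $\tau$-coordinates follow from the standard transformation rules. From $g^{ij}(\tau)=\sum_{a,b}\frac{\partial\tau^i}{\partial y^a}\frac{\partial\tau^j}{\partial y^b}g^{ab}(y)$: each $\partial\tau^i/\partial y^a$ lies in $\mathcal{R}$ and is weighted homogeneous of degree $d_i-d_a$, while $g^{ab}(y)\in\mathcal{R}$ is weighted homogeneous of degree $d_a+d_b$ by the Proposition, so $g^{ij}(\tau)\in\mathcal{R}$ is weighted homogeneous of degree $d_i+d_j$. The contravariant Christoffel symbols $\Gamma^{ij}_s$ transform by the usual (non-tensorial) rule whose ingredients are the Jacobian $\partial\tau^i/\partial y^a$, a second-order Jacobian $\partial^2\tau^j/\partial y^a\partial y^b$, the inverse Jacobian $\partial y^c/\partial\tau^s$, the metric $g^{pq}(y)$ and the old $\Gamma^{pq}_c(y)$; since $\partial y^c/\partial\tau^s\in\mathcal{R}$ as well (by the first step with the roles of $y$ and $\tau$ exchanged), every ingredient lies in $\mathcal{R}$, and a weight count gives $\Gamma^{ij}_s(\tau)\in\mathcal{R}$ weighted homogeneous of degree $d_i+d_j-d_s$ — here $s$ is the lower index written as $m$ in the statement.

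It remains to show that $g^{ij}(\tau)$ and $\Gamma^{ij}_s(\tau)$ are at most linear in $\tau^k$. In the proof of Theorem \ref{thm-du} it is shown that $\partial/\partial\tau^k=e=\sum_j c_j\,\partial/\partial\theta^j$ with the $c_j$ \emph{constant}, i.e. $\partial\theta^j/\partial\tau^k=c_j$ are constants. Consequently $\partial_e$ commutes with every $\partial_{\theta^q}$ and is annihilated on every Jacobian factor ($\partial\tau^i/\partial\theta^p$, $\partial^2\tau^j/\partial\theta^q\partial\theta^r$, $\partial\theta^c/\partial\tau^s$), because $\partial_e(\partial\tau^i/\partial\theta^p)=\partial_{\theta^p}(\partial\tau^i/\partial\tau^k)=0$ and likewise for the higher ones. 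On the other hand condition $(i)$ of Lemma \ref{lem-du}, verified in the proof of Theorem \ref{thm-du}, says precisely that the $\theta$-components $g^{ij}(\theta^0+c_0\lambda,\dots,\theta^l+c_l\lambda)$ and $\Gamma^{ij}_s(\theta^0+c_0\lambda,\dots,\theta^l+c_l\lambda)$ are linear in $\lambda$, that is, $\partial_e^2 g^{ij}(\theta)=0$ and $\partial_e^2\Gamma^{ij}_s(\theta)=0$. Feeding these into the transformation rules above (written now with $\theta$ in place of $y$), and using that a product of one factor annihilated by $\partial_e^2$ with factors annihilated by $\partial_e$ is again annihilated by $\partial_e^2$, one obtains $\partial_{\tau^k}^2 g^{ij}(\tau)=0$ and $\partial_{\tau^k}^2\Gamma^{ij}_s(\tau)=0$, which is exactly the last assertion of the corollary.
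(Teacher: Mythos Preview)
Your proof is correct. The paper states this corollary without proof, treating it as an immediate consequence of the preceding Proposition (polynomiality and homogeneity in the $y$-coordinates) together with the proof of Theorem~\ref{thm-du}; your argument supplies precisely the details a reader is expected to fill in. The only point worth a small remark is your triangularity claim: the polynomial multiplying $\varpi^j$ in \eqref{yz} has degree $l-j$ only for $0\le j\le l-m$, while for $j>l-m$ it has degree $l-k-1$; but since you use this only for $j\le k\le l-m$, and the second family of polynomials contributes to $\theta^j$ only when $j\ge k+1$, the block lower-triangular structure of $A$ (and hence of $A^{-1}$) indeed follows, and the rest of your polynomiality/homogeneity and $\partial_{\tau^k}^2$-annihilation arguments go through as written.
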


%%%%%%%%%%%%%%%%%%%%%%%%%%%%%%%%%%%%%%%%%%%%

\subsection{Flat coordinates of the metric $(\eta^{ij})$}

%%%%%%%%%%%%%%%%%%%%%%%%%%%%%%%%%%%%%%%%%%%%%%
In this subsection, we will show that the flat coordinates of the metric
$(\eta^{ij})$ defined in the last subsection are algebraic functions of
$\tau^1,\dots, \tau^{l+1}, e^{\tau^{l+1}}$. To this end, we first perform changes of coordinates to simplify
the matrix $(\eta^{ij}(\tau))$.

\begin{lem}\label{lem3.1}
There exists a system of coordinates $z^1,\dots, z^{l+1}$ of
the form
\eqa &&z^j=\tau^j+p_j(\tau^1,\dots,\tau^{j-1},e^{\tau^{l+1}}), ~~1\leq j \leq k,
\label{zh2-a}\eeqa
\eqa
&& z^j=\tau^j+\sum_{s=j+1}^{l-m} c^j_s \,\tau^s,\quad k+1\le j\le
l-m-1,\label{zh2-b}\eeqa
\eqa
&& z^j=\tau^j+\sum_{s=j+1}^{l} h^j_s\, \tau^s,\quad l-m+1\le j\le
l-1,\label{zh2-c}\\
&& z^{l-m}=\tau^{l-m},\quad z^l=\tau^l,\quad z^{l+1}=\tau^{l+1},\label{zh2-d}
\eeqa
where $c^j_s$ and $h^j_s$ are some constants and $p_j$ are homogeneous polynomials of
degree $d_j$ such that in the
new coordinates $z^i$  the components of the metric $(\eta^{ij})$ can still been
encoded into a block diagonal matrix of the form (\ref{bdf-1})--(\ref{bdf-3})
with the entries replaced by
\eqa && R_{j}=0,\quad
P_j=0,\quad Q_s=4 s z^{k+s},S_r=4r z^{l-m+r},\label{zh3}\\
&& 1\le j\le k,\quad 1\le s\le
l-k-m,\quad 1\le r\le m. \nn
\eeqa
\end{lem}
\pf Let us first note that the $(k+1)\times (k+1)$ matrix $({\tilde{\eta}}^{ij})$
which has entries
\beq
{\tilde {\eta}}^{ij}=\eta^{ij}(\tau),\ {\tilde{\eta}}^{k+1,m}={\tilde{\eta}}^{m,k+1}=\delta_{m,k},\quad
1\le i,j\le k,\ 1\le m\le k+1
\eeq
coincides, under renaming of the label of coordinate $\tau^{l+1}\mapsto \tau^{k+1}$,
with the  matrix $(\eta^{ij}(\tau))_{(k+1)\times (k+1)}$ that was constructed in the last subsection
with respect to the extended affine Weyl group ${\widetilde
W}^{(k)}(C_k)$. Thus by using the results of \cite{DZ1998} we can
find homogeneous polynomials $p_j, 1\le j\le k$ such that under
the change of coordinates \eqref{zh2-a} and $z^j=\tau^j$, $k+1\le j\le
l+1$ the matrix $({\eta}^{ij}(z))$ has the form (\ref{bdf-1})--(\ref{bdf-3}) with
entries
\eqa
&&R_{j}=0,\quad P_j=0,\quad Q_s=4 s z^{k+s}+(1-\delta_{s,l-k-m}) (s+1) z^{k+s+1},\nn \\
&& S_r=4r z^{l-m+r}-4 (1-\delta_{m,r}) r z^{l-m+r+1},\nn \\
&&1\le j\le k,\quad 1\le r\le m,\quad 1\le s\le l-k-m.\nn
\eeqa

To finish the proof of the lemma, we need to perform a second change of coordinates.
To this end, denote by $\Psi$ an $n\times n$ matrix with entries
 as linear functions of $a^1,\dots, a^n$
\eqa
&&\psi^{ij}(a)=4(i+j-1)a^{i+j-1}+\kappa(i,j)a^{i+j},\quad i,j\ge 1, \label{TR1-a}\\
&& \kappa(i,j)=i+j, \quad or \quad -4(i+j-1).\label{TR1-b}
\eeqa
Here $a^s=0 \ {\rm for}\  s\ge n+1$.
We require a linear transformation of the triangular form
\beq
a^j=\dsum_{\al=j}^n \,B_{\al}^jb^{\al},\quad B^j_j=1,~~j\ge 1
\eeq
such that
\beq\label{TR6}
\dsum_{r,s=1}^n 4(r+s-1)b^{r+s-1}\dfrac{\p a^i}{\p b^r}\dfrac{\p
a^j}{\p b^s}=\psi^{ij}(a),
\eeq
\ian{where $b^s=0 \ {\rm for}\  s\ge n+1$.}
%&&=4(i+j-1) \sum_{m=i+j-1}^n B^{i+j-1}_m b^m
%+\kappa(i,j) \sum_{m={i+j}}^n B^{i+j}_m b^m.
%\eeqa
Equivalently, the constants $B^i_j$ must satisfy the relations
\eqa
&&4(i+j-1)B^{i+j-1}_\gamma+\kappa(i,j)B^{i+j}_\gamma=4 \gamma
\dsum_{\alpha+\beta=\gamma+1}B^i_\alpha B^j_\beta,\nn\\
&&\quad i+j\le \gamma\le n.\label{TR8}
\eeqa
Consider the generating functions
\beq\label{TR9}
f^i(t)=\sum_{\alpha\ge0}B^i_{i+\alpha} t^\alpha,\quad i=1,2,\dots.
\eeq
Then the relations in \eqref{TR8} can be encoded into the following equations:
\beq\label{TR10}
4(i+j-1)t^{i+j-2}f^{i+j-1}+\kappa(i,j)
t^{i+j-1}f^{i+j}= 4\frac{d}{dt}\left(t^{i+j-1}f^if^j\right).
\eeq
When $\kappa(i,j)=i+j$ and $\kappa(i,j)=-4(i+j-1)$, this system of equations has, respectively, the following
solutions:
\beq\label{TR11}
f^i(t)=\cosh\left(\frac{\sqrt{t}}2\right)
\left(\frac{2\sinh\left(\frac{\sqrt{t}}2\right)}{\sqrt{t}}\right)^{2i-1},
\eeq
and
\beq\label{TR111}
f^i(t)=\left(\frac{\tanh(\sqrt{t})}{\sqrt{t}}\right)^{2i-1}.
\eeq
From the above result we know the existence of constants $c^j_s$ and $h_s^j$
such that under the change of coordinates
\eqa
&&z^i\mapsto z^i,\ i=1,\dots, k,l-m,l, l+1,\nn\\
&&z^j\mapsto z^j+\sum_{s=j+1}^{l-m} c^j_s\, z^s,\quad k+1\le j\le
l-m-1,\nn\\
&& z^j\mapsto z^j+\sum_{s=j+1}^{l} h^j_s\, z^s,\quad l-m+1\le j\le
l-1,\nn
\eeqa
the matrix $({\eta}^{ij}(z))$ has the form (\ref{bdf-1})--(\ref{bdf-3}) and with entries given by (\ref{zh3}).
The lemma is proved. \epf

\begin{lem}
Under the change of coordinates
\eqa
&&w^i=z^i, \quad i=1,\dots,k,\ l+1,\label{w1}\\
&&w^{k+1}=z^{k+1} (z^{l-m})^{-\frac1{2 (l-m-k)}},\label{w2}\\
&&
w^s=z^s(z^{l-m})^{-\frac{s-k}{l-m-k}},
\ s=k+2,\cdots,l-m-1,
\label{w3}\\
&& w^{l-m}=(z^{l-m})^{\frac{1}{2(l-m-k)}},\label{w4}\\
&&w^{l-m+1}=z^{l-m+1} (z^l)^{-\frac1{2m}},\label{w5}\\
&&w^r=z^r(z^{l})^{-\frac{r+m-l}{m}},\
r=l-m+2,\cdots,l-1,\label{w6}\\
&&w^{l}=(z^{l})^{\frac{1}{2m}},\label{w7}
\eeqa
the components of the metric $(\eta^{ij}(z))$ are transformed to the form
\beq\label{mw}
\left(\begin{array}{ccccc}
A&0&0&0&0\\
0&0&0&0&1\\
0&0&B_1&0&0\\
0&0&0&B_2&0\\
0&1&0&0&0
\end{array}\right),
\eeq
where the matrix $A=A_{(k-1)\times (k-1)}$ has entries $A^{ij}=\delta_{i,k-j} k$ and
the upper triangular matrices $B_1$ and $B_2$ have the form
\beq\label{b1}
B_1=\left(\begin{array}{cccccccccccc}
0&0& 0&0&\cdots &0 &2& \\
0&H_{k+3}&H_{k+4}&\cdots&H_{l-m-1}&H_{l-m}&&\\
0&H_{k+4}&H_{k+5}&\cdots &H_{l-m}&&\\
\vdots&\vdots &\vdots&&&&&&\\
0&H_{l-m}&& &&&&\\
2&&&&&&&
\end{array}\right)\eeq
and
\beq\label{b2}
B_2=\left(\begin{array}{cccccccccccc}
0&0& 0&0&\cdots &0 &2& \\
0&H_{l-m+3}&H_{l-m+4}&\cdots&H_{l-1}&H_{l}&&\\
0&H_{l-m+4}&H_{l-m+5}&\cdots &H_{l}&&\\
\vdots&\vdots &\vdots&&&&&&\\
0&H_{l}&& &&&&\\
2&&&&&&&
\end{array}\right)\eeq
with
\eqa
&&H_{k+s}=4 s (w^{l-m})^{-2} w^{k+s},~~ H_{l-m}=4 (l-m-k) {(w^{l-m})}^{-2},\nn \\
&&H_{l-m+j}=4 j (w^{l})^{-2} w^{l-m+j},~~ H_{l}=4m {(w^{l})}^{-2}, \label{b3}\\
&& 3\le s\le l-m-k-1,\quad 3\le j\le m-1.\nn
\eeqa
\end{lem}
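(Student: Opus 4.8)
The plan is to verify directly that the explicit coordinate change \eqref{w1}--\eqref{w7} transforms the matrix of $(\eta^{ij}(z))$ from the block form \eqref{bdf-1}--\eqref{bdf-3} with entries \eqref{zh3} into the claimed form \eqref{mw}--\eqref{b3}. The structure of the problem is essentially block-wise: the $(k-1)$-dimensional $W_1$-block sits among the coordinates $z^1,\dots,z^{k-1}$, the distinguished coordinate $z^k$ (together with $z^{l+1}$) provides the off-diagonal pairing, the $W_2$-block lives among $z^{k+1},\dots,z^{l-m}$, and the $W_3$-block among $z^{l-m+1},\dots,z^l$. Since \eqref{w1} leaves $z^1,\dots,z^k,z^{l+1}$ untouched, the $A$-block (which is just the antidiagonal $k$ coming from $W_1$ after the entries $R_j$ have been killed in Lemma \ref{lem3.1}) and the $1$'s pairing $z^k$ with $z^{l+1}$ are automatically preserved. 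So the whole content is to check what happens to $W_2$ and $W_3$ separately, and by the symmetry between \eqref{w2}--\eqref{w4} and \eqref{w5}--\eqref{w7} it suffices to do one of them, say the $W_2$-block, and read off the other by the substitution $k\leadsto l-m$, $l-m\leadsto l$, $l-m-k\leadsto m$.

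First I would isolate the $W_2$ computation. The metric components in the block $W_2$ are, by \eqref{zh3}, $\eta^{k+s,\,k+t}=4s\,z^{k+s}\,\delta_{s+t,\,\mathrm{something}}$ — more precisely the anti-triangular Hankel-type matrix with $(s,t)$-entry $4s\,z^{k+s+t-1}$ for $s+t\le l-m-k+1$ and $0$ otherwise, where I set $z^{k+(l-m-k)}=z^{l-m}$ and higher-index $z$'s vanish. The new coordinates $w^{k+1},\dots,w^{l-m}$ are defined by \eqref{w2}--\eqref{w4} as monomials in $z^{k+1},\dots,z^{l-m}$ with fractional powers of $z^{l-m}$. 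The key step is to compute the Jacobian $\partial w^i/\partial z^j$ within this block and then form $\tilde\eta^{ij}=\sum_{a,b}\frac{\partial w^i}{\partial z^a}\frac{\partial w^j}{\partial z^b}\eta^{ab}$. Here it is cleanest to use the fact that $(\eta^{ij})$ is a \emph{metric} (a contravariant 2-tensor), so I can equally compute the covariant metric $\eta_{ij}$ in the $z$-coordinates by inverting the anti-triangular block, transform it by $\partial z^a/\partial w^i$, and invert again; the anti-triangular structure makes the inverse completely explicit. Either way, the fractional exponents in \eqref{w2}--\eqref{w7} are chosen exactly so that all the "spurious" off-diagonal terms telescope/cancel, leaving the clean form $B_1$ in \eqref{b1} with entries \eqref{b3}: the corner entries become the constant $2$ (this is where the normalizations $z^{l-m}\mapsto (w^{l-m})^{2(l-m-k)}$ in \eqref{w4} and the factor $\frac1{2(l-m-k)}$ enter), and the remaining Hankel entries $H_{k+s}$ pick up the overall factor $(w^{l-m})^{-2}$.

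The main obstacle will be bookkeeping: correctly handling the boundary cases $s+t=l-m-k+1$ versus $s+t=l-m-k$ in the Hankel pattern, and checking that the terms involving $\partial/\partial z^{l-m}$ of the fractional powers combine with the off-diagonal $\eta^{ab}$ to produce \emph{exactly} $H_{l-m}=4(l-m-k)(w^{l-m})^{-2}$ and nothing else — i.e. that the coordinate $w^{l-m}$ pairs only with $w^{k+1}$ and all cross terms with $w^{k+2},\dots,w^{l-m-1}$ vanish. I expect this to reduce, after clearing denominators, to a polynomial identity in the exponents that holds because of the arithmetic relation $\frac{s-k}{l-m-k}+\frac{t-k}{l-m-k}=1+\frac{(s+t-1)-k}{l-m-k}-\frac{1}{l-m-k}$ governing when two monomials $w^{k+s}w^{k+t}$ reconstitute a single $z$. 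Once the $W_2$-block is verified, the $W_3$-block follows verbatim under the index relabelling noted above, and the diagonal/anti-diagonal pieces $A$ and the two $1$'s are inherited unchanged from Lemma \ref{lem3.1}. Assembling these pieces gives the matrix \eqref{mw}, completing the proof. \epf
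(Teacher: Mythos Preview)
Your proposal is essentially the same approach as the paper's, which simply says ``By a straightforward calculation'': you carry out that calculation block by block, observing that the unchanged coordinates $z^1,\dots,z^k,z^{l+1}$ leave the $A$-block and the pairing $\eta^{k,l+1}=1$ intact, and that the $W_2$- and $W_3$-blocks are handled by identical computations up to relabelling.

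One small correction to your description: the $(s,t)$-entry of the simplified $W_2$ from \eqref{zh3} is $Q_{s+t-1}=4(s+t-1)\,z^{k+s+t-1}$, not $4s\,z^{k+s+t-1}$; your formula is not even symmetric in $(s,t)$. This is only a slip in writing down the starting data and would be caught immediately upon doing the Jacobian computation, so it does not affect the soundness of the plan.
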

\pf By a straightforward calculation. \epf

\begin{rem}
When $m=l-k$, the matrix $B_1$ does not appear in (\ref{mw}), i.e., the matrix given in (\ref{mw}) has the form
\beq
\left(\begin{array}{ccccc}
A&0&0&0\\
0&0&0&1\\
0&0&B_2&0\\
0&1&0&0
\end{array}\right),\nn
\eeq
In this case we use the formulae (\ref{w1}), (\ref{w5})--(\ref{w7}) for the change of coordinates.
When $m=l-k-1$, we have $B_1=1$, and we use the formulae (\ref{w1}), (\ref{w4})--(\ref{w7}) to define the new
coordinates. When $m=l-k-2$, the matrix $B_1$ has the form
$
\left(\begin{array}{cc} 0&2\\2&0
\end{array}\right).
$
We understand the above lemma in a similar way as we did for the cases when $m=0, 1, 2$.
\end{rem}

\begin{thm}\label{DZZ-thm4.4}
We can choose the flat coordinates of the metric $(\eta^{ij}(w))$
in the form
\eqa &&t^1=w^1,\dots, t^k=w^k,\
t^{l+1}=w^{l+1}, \nn \\
&& t^{k+1}=w^{k+1}+w^{l-m}\,h_{k+1}(w^{k+2},\dots,w^{l-m-1}),\nn \\
&& t^j=w^{l-m} (w^j+h_j(w^{j+1},\dots,w^{l-m-1})),\ k+2\le j\le l-m-1, \nn\\
&& t^{l-m}=w^{l-m},\nn\\
&& t^{l-m+1}=w^{l-m+1}+w^l\,h_{l-m+1}(w^{l-m+2},\dots,w^{l-1}),\nn \\
&& t^s=w^l (w^s+h_s(w^{s+1},\dots,w^{l-1})),\ l-m+2\le s\le l-1, \nn\\
&& t^l=w^l.\nn
\eeqa
Here $h_{l-m-1}=h_{l-1}=0$, $h_j$ are weighted homogeneous
polynomials of degree $\frac{k\,(l-m-j)}{l-m-k}$  for $j=k+1,\dots,l-m-2$
and $h_s$ are weighted homogeneous polynomials of degree $\frac{k\,(l-s)}{m}$
for $s=l-m+2,\dots,l-1$. The degrees of the coordinates $w^i$ are defined in a natural
way through the degrees of $y^i$ given in (\ref{add1.4}).
\end{thm}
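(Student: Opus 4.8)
The plan is to exploit the block‑diagonal form \eqref{mw} of $(\eta^{ij}(w))$. The block $A$ and the antidiagonal $2\times 2$ block pairing $w^{k}$ with $w^{l+1}$ are already constant, so $t^{i}:=w^{i}$ for $i\in\{1,\dots,k,l+1\}$ are flat coordinates in those directions, and the problem reduces to producing flat coordinates separately for the metric $B_{1}$ on the variables $w^{k+1},\dots,w^{l-m}$ and for $B_{2}$ on $w^{l-m+1},\dots,w^{l}$. Since $B_{1}$ and $B_{2}$ have literally the same shape --- compare \eqref{b1}--\eqref{b3}, with $l-m-k$ and the index block $\{k+1,\dots,l-m\}$ replaced by $m$ and $\{l-m+1,\dots,l\}$ --- it suffices to treat $B_{1}$, and the formulas for $t^{l-m+1},\dots,t^{l}$ together with the $h_{s}$ then follow by relabeling. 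A useful guiding fact, checked at the outset from the degrees (which propagate from $\deg y^{j}=d_{j}$ in \eqref{add1.4} through the coordinate changes of the preceding lemmas, together with $\deg w^{l-m}=\tfrac{k}{2(l-m-k)}$), is that every nonzero constant $\eta^{ij}$ will satisfy $\deg t^{i}+\deg t^{j}=k$; so within the $B_{1}$‑block the flat metric must be anti‑diagonal, with $t^{k+1}$ paired to $t^{l-m}$, $t^{k+2}$ to $t^{l-m-1}$, and so on, which already dictates the scalings $t^{l-m}=w^{l-m}$, $t^{k+1}=w^{k+1}+\dots$, and $t^{j}\propto w^{l-m}w^{j}+\dots$ for the interior indices.

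For $B_{1}$ I would, following the strategy of Lemma \ref{lem3.1}, pass back to the coordinates $z$ in which the block is the pure Hankel matrix $W_{2}$ with entries $Q_{s}=4s\,z^{k+s}$ (see \eqref{zh3}). Encoding $z^{k+1},\dots,z^{l-m}$ as the coefficients of a monic polynomial $\Lambda(u)$, the Hankel metric $W_{2}$ is, up to a constant, the residue pairing $\sum\res_{d\Lambda=0}\frac{\partial'\Lambda\,\partial''\Lambda}{\Lambda'_{u}}\,du$, i.e.\ the flat metric of an $A$‑type polynomial Frobenius manifold; its flat coordinates are classical --- up to normalization the coefficients produced by expanding the fractional powers $\Lambda^{j/(l-m-k)}$ --- and are weighted‑homogeneous polynomials in the $z^{k+s}$ of triangular shape $z^{k+s}+(\text{polynomial in the others})$. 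Transporting these polynomials through the algebraic substitution \eqref{w2}--\eqref{w4} produces exactly expressions of the form $w^{l-m}(w^{j}+h_{j})$ for the interior indices, $w^{k+1}+w^{l-m}h_{k+1}$ for the first index, and $w^{l-m}$ for $j=l-m$; the $h_{j}$ inherit weighted homogeneity, and matching $\deg w^{j}=\tfrac{k(l-m-j)}{l-m-k}$ forces $\deg h_{j}=\tfrac{k(l-m-j)}{l-m-k}$. The boundary case $h_{l-m-1}=0$ is then automatic, since $h_{l-m-1}$ would be a polynomial of positive degree in no variables.

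Alternatively, and closer to the explicit computations in Lemma \ref{lem3.1}, one can determine the $h_{j}$ head‑on by imposing constancy of $\eta^{ij}$: writing $\eta^{ij}=\mathcal L_{e}g^{ij}$ and using the generating functions \eqref{gg} and \eqref{gamma} for $g^{ij}$ and its Levi‑Civita connection, the requirement that the transformed components $\sum\frac{\partial t^{i}}{\partial w^{a}}\frac{\partial t^{j}}{\partial w^{b}}\eta^{ab}(w)$ be constant reduces, block by block, to a first‑order linear recursion for the coefficients of each $h_{j}$; packaging the $h_{j}$ of one block into a single generating series turns this recursion into an elementary ODE whose solution is an algebraic/hyperbolic expression (the analogue of \eqref{TR11}, \eqref{TR111}) that is polynomial exactly under the weight constraint above. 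In either approach the proof finishes by substituting the claimed $t^{1},\dots,t^{l+1}$ back and verifying with \eqref{b3} and the chain rule that all components $\eta^{ij}$ are constant.

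I expect the main obstacle to be the index and normalization bookkeeping involved in identifying $B_{1}$ (and $B_{2}$) with the correct Hankel/$A$‑type model and in transporting its flat coordinates into the $w$‑variables: pinning down which coordinate carries the $(w^{l-m})^{-2}$ factor, the asymmetric roles of $w^{k+1}$ and $w^{l-m}$ at the two ends of the block (these end up requiring the genuinely different formulas $t^{k+1}=w^{k+1}+w^{l-m}h_{k+1}$ versus $t^{l-m}=w^{l-m}$), the vanishing $h_{l-m-1}=h_{l-1}=0$, and the consistency of the two sub‑blocks. Once the model is correctly identified, the existence and weighted homogeneity of the $h_{j}$ are essentially forced, and the remaining work is purely the degree bookkeeping and the concluding verification computation.
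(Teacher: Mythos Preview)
Your reduction to the two blocks $B_{1}$, $B_{2}$ and the observation that it suffices to treat one of them matches the paper. But your main proposal for handling $B_{1}$ rests on a false identification: the Hankel matrix $W_{2}$ with entries $Q_{s}=4s\,z^{k+s}$ is \emph{not} the residue pairing of an $A$-type polynomial $\Lambda$. For a monic $\Lambda(u)=u^{N}+\zeta_{1}u^{N-1}+\cdots+\zeta_{N}$, the pairing $\sum\res_{d\Lambda=0}\frac{\partial_{i}\Lambda\,\partial_{j}\Lambda}{\Lambda'}\,du$ computes, via the residue at infinity, to a \emph{constant} anti-diagonal matrix in the coefficients $\zeta_{i}$ (the standard Saito metric of $A_{N-1}$), whereas $W_{2}$ is genuinely linear in the $z$'s. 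So you cannot import ``classical'' $A$-type flat coordinates; the existence of polynomial flat coordinates for $W_{2}$ (equivalently $B_{1}$) is exactly the content to be proved. Your alternative plan (b) --- set up a recursion and solve it by a generating-series ODE, in the style of Lemma~\ref{lem3.1} --- is reasonable in spirit but, as written, is only a sketch: nothing in the proposal explains why the recursion closes or why the resulting series terminate.

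The paper's argument is different and more direct. It writes the flat-coordinate equation on the $B_{1}$-block as the linear system $\partial_{s}\Phi=\Phi A_{s}$ for the Jacobian $\Phi=(\partial t^{k+i}/\partial w^{k+j})$, and observes from the explicit shape of $B_{1}$ that every connection matrix $A_{s}$ is regular at $\mathbf{w}=0$ except $A_{l-m}$, which is exactly $\mathrm{diag}(0,\tfrac{1}{w^{l-m}},\dots,\tfrac{1}{w^{l-m}},0)$. The gauge substitution $\Phi=\Psi\cdot\mathrm{diag}(1,w^{l-m},\dots,w^{l-m},1)$ removes this pole, leaving $\partial_{s}\Psi=\Psi B_{s}$ with all $B_{s}$ weighted-homogeneous polynomial and $\partial_{l-m}\Psi=0$; the unique solution with $\Psi(0)=I$ is then analytic, and weighted homogeneity (positive degrees) forces it to be polynomial. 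The diagonal gauge factor is precisely what produces the asymmetric end formulas $t^{k+1}=w^{k+1}+w^{l-m}h_{k+1}$, $t^{j}=w^{l-m}(w^{j}+h_{j})$ for interior $j$, and $t^{l-m}=w^{l-m}$ that you correctly anticipated from the degree count --- but the mechanism is this Fuchsian regularization, not an identification with a known Frobenius manifold.
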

\pf
From the block diagonal form (\ref{mw}) of the matrix $(\eta^{ij}(w))$ and the definition
(\ref{b1})--(\ref{b3}) of its entries,
we know that the flat coordinates can be chosen to have the form
\eqa
&&t^i=w^i, \ 1\le i\le k,\ i=l+1,\\
&&t^j=t^j(w^{k+1},\dots, w^{l-m}),\quad k+1\le j\le l-m\label{ft-1}\\
&&t^s=t^s(w^{l-m+1},\dots, w^{l}),\quad l-m+1\le s\le l.
\eeqa
Since the matrices $B_1$ and $B_2$ have the same form, and $B_1$ becomes constant when
$m=l-k$ or $m=l-k-1$, we only need to consider the flat coordinates (\ref{ft-1}) for the metric that
corresponds to the matrix $B_1$ defined in (\ref{b1}) with $m\le l-k-3$.

The functions $t^j=t^j(w^{k+1},\dots, w^{l-m})$
must satisfy the
 following system of PDEs
\beq\label{zh5}
\frac{\pal^2 t}{\pal w^a\pal w^b}-\sum_{c=k+1}^{l-m}\gamma^c_{ab}
   \frac{\pal t}{\pal w^c}=0,\quad a,b=k+1,\dots,l-m,
\eeq
\ian{where $\gamma^{c}_{ab}$ are the Christoffel symbols with respect to the metric $B_1$.
Let us introduce the $(l-m-k)\times (l-m-k)$ matrix
\beq
\Phi=(\phi_j^i),\quad {\phi}^i_j=\frac{\pal t^{k+i}}{\pal w^{k+j}},\quad 1\le i,j\le l-m-k,\nn
\eeq
where in the notation $\phi_j^i$ the upper (resp. lower) index denotes the row (resp. column) number of $\Phi$.}
Then the system (\ref{zh5}) can be written in the form
\beq
\pal_s\Phi=\Phi A_s,\quad \pal_s=\frac{\pal}{\pal w^s}, \quad s=k+1,\dots,l-m,\label{zh6-a}
\eeq
where the entries of the coefficient matrices $A_s$ are rational functions of $w^{k+1},\dots, w^{l-m}$.
It follows from the simple expressions of the entries of the matrix $B_1$ that the systems (\ref{zh6-a})
are regular at ${\bf w}=$$(w^{k+1}$, $\dots,w^{l-m})=0$
except for case when $s=l-m$, in this case the coefficient matrix has the form
\beq
A_{l-m}={\mbox{diag}}(0,\frac1{w^{l-m}},\dots,\frac1{w^{l-m}},0).\nn
\eeq
Note for all the cases with $m=k+1,\dots,l-m-1$ the entries of the matrices $A_s$ are
weighted homogeneous polynomials of $w^{k+1},\dots, w^{l-m}$.

On writing$\Phi$ in the form
\beq
\Phi=\Psi \,{\mbox{diag}}(1,w^{l-m},\dots,w^{l-m},1),\nn
\eeq
the systems in (\ref{zh6-a}) are converted to
\beq
\pal_s\Psi=\Psi B_s,\quad \pal_{l-m}\Psi=0,\quad s=k+1,\dots,l-m-1.\nn
\eeq
The entries of the coefficient matrices $B_s$ are now weighted homogeneous polynomials
of $w^{k+1},\dots, w^{l-m}$, thus we can find a unique solution $\Psi$ of the above systems
such that it is
analytic at ${\bf w}=0$ and
\beq
\left.\Psi\right|_{{\bf w}=0}={\mbox{diag}}(1,\dots,1).\nn
\eeq
From the weighted homogeneity of the coefficient matrices $B_s$ it follows that
the elements of $\Psi$ are also weighted homogeneous. Since $\deg w^j>0$ for $j=k+1,\dots, l-m$
we know that they are in fact polynomials of $w^{k+1},\dots, w^{l-m}$, and thus the results of the
theorem follow. The theorem is proved.\epf

Due to the above construction, we can associate the following
natural degrees to the flat coordinates
\eqa
&&\tilde d_j=\deg t^j:=\frac{j}{k},\quad 1\le j\le k,\label{zh18a}\\
&& \tilde d_s=\deg t^s:=\dfrac{2l-2m-2s+1}{2(l-m-k)},\quad  k+1\le s\le l-m,\label{zh18b}\\
&& \tilde d_\alpha=\deg t^\alpha:=\dfrac{2l-2\alpha+1}{2m},\quad  l-m+1\le \alpha\le l,\label{zh18c}\\
&&\tilde d_{l+1}=\deg t^{l+1}:=0,\quad \deg e^{t^{l+1}}:=\frac{1}{k},\label{zh18d}
\eeqa
and we readily have the
following corollary.

\begin{cor}\label{cor3.5}
In the flat coordinates $t^1,\dots, t^{l+1}$, the nonzero entries of
the matrix $(\eta^{ij}(t))$ are given by
\beq \label{zh7}
\eta^{ij}=\left\{\begin{array}{lll} k,\quad & j=k-i,\ & 1\le i\le k-1,\\
1,&i=l+1, j=k\ & {\mbox or}\ i=k,\ j=l+1,\\
4 (l-m-k),\quad  &j=l-m+k-i+1,\ &k+2\le i\le l-m-1,\\
2,&i=l-m, j=k+1\ &{\mbox or}\ i=k+1,\ j=l-m,\\
4m,\quad  &j=2l-m-i+1,\ &l-m+2\le i\le l-1,\\
2,&i=l, j=l-m+1\ &{\mbox or}\ i=l-m+1,\ j=l.
\end{array}\right.\eeq
 The entries of the matrix $(g^{ij}(t))$ and the
Christoffel symbols $\Gamma^{ij}_m(t)$ are weighted homogeneous
polynomials of $t^1,\dots,t^l, \dfrac{1}{t^{l-m}},\dfrac{1}{t^l}, e^{t^{l+1}}$ of
degrees $\td_i+\td_j$ and $\td_i+\td_j-\td_m$ respectively.
In particular,
\beq \label{zh8} \begin{array}{ll}g^{s, l+1}=\td_s
t^s, \quad & 1\leq s
\leq l,\quad g^{l+1,  l+1}=\dfrac{1}{k},\\
\Gamma_{j}^{l+1,i}=\td_j \delta_{i,j},\quad & 1\leq i, j \leq l+1.
\end{array}
\eeq
\end{cor}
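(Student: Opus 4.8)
The three assertions follow by assembling Theorem~\ref{DZZ-thm4.4}, the Corollary after Theorem~\ref{thm-du} and a degree count, and I would treat them separately. For the explicit form \eqref{zh7} of $(\eta^{ij}(t))$, I would first invoke Theorem~\ref{DZZ-thm4.4}: the $t^1,\dots,t^{l+1}$ are flat coordinates for $(\eta^{ij})$, so $(\eta^{ij}(t))$ is a \emph{constant} matrix, and it is enough to compute $\eta^{ij}(t)=\sum_{a,b}\frac{\partial t^i}{\partial w^a}\frac{\partial t^j}{\partial w^b}\,\eta^{ab}(w)$ from the explicit block form \eqref{mw} and the explicit change $w\mapsto t$ of Theorem~\ref{DZZ-thm4.4}. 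Since $t^1,\dots,t^k,t^{l+1}$ coincide with $w^1,\dots,w^k,w^{l+1}$, the block $A$ and the entry $1$ linking $w^k$ with $w^{l+1}$ are carried over verbatim, which accounts for the first two lines of \eqref{zh7}. For the $B_1$- and $B_2$-blocks the change $w\mapsto t$ is triangular up to the overall factors $w^{l-m}$ and $w^l$, while $t^{l-m}=w^{l-m}$ and $t^l=w^l$; as the outcome must be constant, it equals its leading part, which one reads off by combining the diagonal Jacobian $\mathrm{diag}(1,w^{l-m},\dots,w^{l-m},1)$ (respectively $\mathrm{diag}(1,w^l,\dots,w^l,1)$) with the anti-diagonal entries of $B_1,B_2$ in \eqref{b1}--\eqref{b3}: the corner entries yield the $2$'s, the entries $H_{l-m}=4(l-m-k)(w^{l-m})^{-2}$ and $H_l=4m(w^l)^{-2}$ yield $4(l-m-k)$ and $4m$, and the remaining, non-constant entries $H_{k+s},H_{l-m+j}$ are cancelled by the off-diagonal part of the Jacobian --- precisely the effect the polynomials $h_j,h_s$ of Theorem~\ref{DZZ-thm4.4} were constructed to produce. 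This gives \eqref{zh7}.

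Next, for the polynomiality and weighted homogeneity of $g^{ij}(t)$ and $\Gamma^{ij}_m(t)$, I would start from the Corollary after Theorem~\ref{thm-du}, by which these are weighted homogeneous polynomials in $\tau^1,\dots,\tau^l,e^{\tau^{l+1}}$, and then express the $\tau$'s through the $t$'s by composing the inverses of the three coordinate changes. The change $\tau\mapsto z$ of Lemma~\ref{lem3.1} is triangular with unit diagonal, its inverse polynomial (involving $e^{\tau^{l+1}}$); the change $z\mapsto w$ inverts to $z^{l-m}=(w^{l-m})^{2(l-m-k)}$, $z^l=(w^l)^{2m}$, and $z^s$ polynomial in the $w$'s; and the change $w\mapsto t$ of Theorem~\ref{DZZ-thm4.4} inverts to $w^1,\dots,w^k,w^{l+1}=t^1,\dots,t^k,t^{l+1}$, with the remaining $w^j$ recovered recursively as polynomials in the $t$'s and $1/t^{l-m}$, $1/t^l$. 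Hence every $\tau^a$ and $e^{\tau^{l+1}}$ becomes a polynomial in $t^1,\dots,t^l,1/t^{l-m},1/t^l,e^{t^{l+1}}$; substituting into the $\tau$-expressions of $g^{ij}$ and $\Gamma^{ij}_s$ and transforming by the corresponding Jacobian factors (and, for $\Gamma$, the additional second-derivative term) keeps one in the same ring, since $\det(\partial\tau/\partial t)$ is a nonzero constant times a monomial in $t^{l-m}$ and $t^l$. No negative power of $e^{t^{l+1}}$ can occur, the only denominators are powers of $t^{l-m}$ and $t^l$ (the only coordinates entering the changes through roots), and the degrees $\deg g^{ij}(t)=\td_i+\td_j$, $\deg\Gamma^{ij}_m(t)=\td_i+\td_j-\td_m$ follow because all the coordinate changes are homogeneous for the grading $\deg t^j=\td_j$, $\deg e^{t^{l+1}}=\frac1k$ fixed in \eqref{zh18a}--\eqref{zh18d}.

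Finally, for the formulas \eqref{zh8}, I would argue directly in the $x$-coordinates, noting that $t^{l+1}=y^{l+1}=2\pi i\,x_{l+1}$ is unchanged throughout. By \eqref{DS3.1}--\eqref{DS3.2}, $g^{l+1,l+1}(t)=(2\pi i)^2(dx_{l+1},dx_{l+1})^{\sptilde}=\frac1k$, and, since $(dx_{l+1},dx_s)^{\sptilde}=0$, $g^{s,l+1}(t)=-\frac{i}{2k\pi}\frac{\partial t^s}{\partial x_{l+1}}$. As the $\td$-grading is generated by $\frac1{2\pi i k}\partial_{x_{l+1}}$ and $t^s$ has degree $\td_s$, one gets $\frac{\partial t^s}{\partial x_{l+1}}=2\pi i k\,\td_s\,t^s$, whence $g^{s,l+1}(t)=\td_s t^s$. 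For the contravariant Christoffel symbols I would use the identity $\Gamma^{ij}_n(t)\,dt^n=\frac{\partial t^i}{\partial x^p}\frac{\partial^2 t^j}{\partial x^q\partial x^r}(dx^p,dx^q)^{\sptilde}\,dx^r$ from the proof of the Proposition following \eqref{bei-5}, valid in any coordinate system on $\mathcal M$; for $i=l+1$ it collapses, using $\frac{\partial t^{l+1}}{\partial x^p}=2\pi i\,\delta_{p,l+1}$ and $(dx^{l+1},dx^q)^{\sptilde}=-\frac1{4k\pi^2}\delta_{q,l+1}$, to $-\frac{i}{2k\pi}\,d\!\left(\frac{\partial t^j}{\partial x_{l+1}}\right)=-\frac{i}{2k\pi}\,d\!\left(2\pi i k\,\td_j t^j\right)=\td_j\,dt^j$, so $\Gamma^{l+1,i}_j(t)=\td_j\delta_{i,j}$ (the case $j=l+1$ being covered by $\td_{l+1}=0$).

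Since the corollary is essentially bookkeeping built on Theorem~\ref{DZZ-thm4.4} and the Corollary after Theorem~\ref{thm-du}, I do not expect a conceptual obstacle. The only part that requires genuine computation is the first one, where one must verify that the off-diagonal part of the change $w\mapsto t$ exactly cancels the non-constant entries $H_{k+s},H_{l-m+j}$ and leaves precisely the constant matrix \eqref{zh7}; this is forced by the flatness statement of Theorem~\ref{DZZ-thm4.4} and need only be checked to leading order.
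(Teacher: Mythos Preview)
Your proposal is correct, and there is essentially nothing to compare it against: the paper states Corollary~\ref{cor3.5} immediately after defining the degrees \eqref{zh18a}--\eqref{zh18d} with only the words ``and we readily have the following corollary,'' giving no proof of its own. Your argument supplies exactly the bookkeeping the paper suppresses --- reading off \eqref{zh7} from the block form \eqref{mw}--\eqref{b3} and the triangular change of Theorem~\ref{DZZ-thm4.4}, tracing the polynomial ring through the three coordinate changes, and obtaining \eqref{zh8} directly from the $x$-coordinate metric \eqref{DS3.1}--\eqref{DS3.2} via the identification of $\frac{1}{2\pi i k}\partial_{x_{l+1}}$ with the $\tilde d$-grading operator --- and each step is sound.
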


The numbers $\td_1,\dots,\td_{l+1}$ satisfy a duality relation that is
similar to that of \cite{DZ1998}. To describe this duality
relation, let us delete the $k$-th vertex of the Dynkin diagram
${\mathcal R}$. We then obtain two components ${\mathcal R}\setminus
{\al_k}={\mathcal R}_1\cup {\mathcal R}_2$. For any given integer $0\le m \le l-k$, we
denote ${\mathcal R}_2={\mathcal R}_{21}\cup {\mathcal R}_{22}$, where
 ${\mathcal R}_{21}=\{\alpha_{k+1},\cdots,\alpha_{l-m}\}$ and
 ${\mathcal R}_{22}=\{\alpha_{l-m+1},\cdots,\alpha_{l}\}$.
On each component we have an involution $i\mapsto i^*$ given by the
reflection with
respect to the center of the component. Define
\beq\label{zh-n1}
k^*=l+1,\quad (l+1)^*=k,
\eeq
then we have
\beq\label{zh-n2}
\td_i+\td_{i^*}=1,\quad i=1,\dots, l+1,
\eeq
and from the above
corollary we see that $\eta^{ij}$ is a nonzero constant iff
$j=i^*$.
%%%%%%%%%%%%%%%%%%%%%%%%%%%%%%%%%%%%%%%%%%%%%%%%
\subsection{Frobenius manifold structures on the orbit space of $\widetilde{W}^{(k)}(C_l)$}
%%%%%%%%%%%%%%%%%%%%%%%%%

Now we are ready to describe the Frobenius manifold structures on the orbit space of
the extended affine Weyl group ${\widetilde W}^{(k)}(C_l)$. Let us first recall the definition of Frobenius
manifold, see \cite{Du1} for details.

\begin{de} A {\it Frobenius algebra} is a pair $(A, <~,~>)$ where $A$
is a commutative associative algebra with a unity $e$ over a field $\mathcal{K}$ (in our case
$\mathcal{K}={\mathbb C}$) and $<~,~>$ is a $\mathcal{K}$-bilinear
symmetric nondegenerate {\it invariant} form on $A$, i.e.,
$$
<x\cdot y, z> = <x, y\cdot z>,\quad \forall\ x, y, z \in A.
$$

\end{de}
\begin{de}A Frobenius  structure of charge $d$
on an n-dimensional manifold $M$ is a structure of Frobenius algebra on the tangent spaces $T_tM=(A_t,<~,~>_t)$
depending (smoothly, analytically etc.) on the point $t$. This structure satisfies the following axioms:
 \begin{itemize}
\item[FM1.] The metric $<~,~>_t$ on $M$ is flat, and the unity vector field $e$ is covariantly constant, i.e.,
$\nabla e=0$. Here we
denote $\nabla$ the Levi-Civita connection for this flat metric.
\item[FM2.] Let $c$ be the 3-tensor $c(x,y,z):=<x\cdot y, z>$, $x,\, y,\,
z\in T_tM$. Then the 4-tensor $(\nabla_w c)(x,y,z)$ is symmetric in
$x,\, y,\, z, \, w \in T_tM$.
\item[FM3.] The existence on $M$ of a vector field $E$, called the Euler vector field,
which satisfies the conditions $\nabla\nabla E=0$ and
$$
[E, x\cdot y] -[E,x]\cdot y -x\cdot [E,y] = x\cdot y,
$$
$$
E<x,y>-<[E,x],y>-<x,[E,y]>=(2-d)<x,y>
$$
for any vector fields $x, y$ on $M$.
\end{itemize}
A manifold $M$ equipped with a Frobenius structure on it is called a Frobenius manifold.
 \end{de}

Let us choose local flat coordinates $t^1,\cdots t^n$ for the
invariant flat metric, then locally there exists a function
$F(t^1,\cdots,t^n)$, called the {\em potential} of the Frobenius
manifold, such that \beq < u\cdot v,w>=u^i v^j w^s \frac{\p^3
F}{{\p t^i}{\p t^j}{\p t^s}} \label{WDVV0} \eeq for any three
vector fields $u=u^i\frac{\p}{\p t^i}$, $v=v^j\frac{\p}{\p t^j}$,
$w=w^s\frac{\p}{\p t^{s}}$. Here and in what follows summations
over repeated indices are assumed. By definition, we can also
choose the coordinates $t^1$ such that $ e=\frac{\pal}{\pal
t^1} $. Then in the flat coordinates the components of of the
flat metric $<\frac{\p}{\p t^i},\frac{\p}{\p t^j}>$ can be
expressed in the form
\beq \frac{\p^3 F}{{\p t^1}{\p t^i}{\p
t^j}}=\eta_{ij},\quad i,j=1,\dots, n. \label{WDVV1} \eeq
The associativity of the Frobenius algebras is equivalent to the
following overdetermined system of equations for the function $F$
\beq\label{WDVV2} \frac{\p^3 F}{{\p t^i}{\p t^j}{\p
t^\lambda}}\eta^{\lambda\mu} \frac{\p^3 F}{{\p t^\mu}{\p t^k}{\p
t^m}}=\frac{\p^3 F}{{\p t^k}{\p t^j}{\p
t^\lambda}}\eta^{\lambda\mu} \frac{\p^3 F}{{\p t^\mu}{\p t^i}{\p
t^m}} \eeq for arbitrary indices $i,j,k,m$ from $1$ to $n$.

\ian{We assume that flat coordinates have been chosen so that}
the Euler vector field $E$ has the form
\beq E=\sum_{i=1}^n (\hat{d}_i t^i+r_i)\frac{\pal}{\pal t^i} \eeq
for some constants $\hat{d}_i, r_i,\, i=1,\dots,n$ which satisfy $
\hat{d}_1=1, r_1=0 $. From the axiom FM3, it follows that the
potential $F$ satisfies the quasi-homogeneity condition \beq
\label{WDVV3} \mathcal{L}_EF=(3-d)F+\mbox{quadratic polynomial in
t}. \eeq The system \eqref{WDVV1}--\eqref{WDVV3} is called the
{\sl $WDVV$ equations of associativity} which is equivalent to the
above definition of Frobenius manifold in the chosen system of
local coordinates.

Let us also recall an important geometrical structure on a Frobenius manifold $M$,
the {\sl intersection form} of $M$.
This is a symmetric
bilinear form $(~,~)^*$ on $T^*M$ defined by the formula
\beq (w_1,w_2)^*=i_E(w_1\cdot w_2),\eeq
here the product of two 1-forms $w_1$, $w_2$ at a point $t\in M$ is defined
by using the algebra structure on $T_tM$
and the isomorphism
\beq T_tM\to T_t^*M\eeq
 established by the invariant flat metric $<~,~>$. In the flat
 coordinates $t^1,\cdots,t^n$ of the invariant
 metric,  the intersection form can be represented by
 \beq
(dt^i,dt^j)^*=\mathcal{L}_EF^{ij}=(d-1+\hat{d}_i+\hat{d}_j)F^{ij},
 \eeq
where \beq F^{i j}=\eta^{i i'}\eta^{j j'}\dfrac{\p^2 F}{{\p
t^{i'}}{\p t^{j'}}}\eeq
and $F(t)$ is the potential of the Frobenius manifold.
Denote by $\Sigma_0\subset M$ the {\em discriminant} of $M$ on which the intersection form degenerates, then
an important property of the intersection form is that on $M\setminus\Sigma_0$ its inverse defines a new flat metric.

%\begin{thm}\label{thm5.3}For any fixed integer $0\le m\le l-k$, there exists a unique Frobenius manifold structure
% of charge $d=1$ living on the covering of the orbit space
%  $\mathcal{M} \setminus\Sigma$ of ${\widetilde{W}}^{(k)}(C_l)$ polynomial in
%$t^1,\, \cdots, \, t^{l+1},\, \dfrac{1}{t^{l-m}}, \dfrac{1}{t^{l}}, e^{t^{l+1}}$ such that
%\begin{enumerate}
%\item The unity vector field $e$ coincides with
%$\dsum_{j=k}^{l} c_j \frac{\pal}{\pal y^j}=\dfrac{\p}{\p t^k}$;
% \item The Euler vector field has the form
%\begin{equation}E=\dsum_{\alpha=1}^{l}\td_\alpha t^\alpha \dfrac{\p}{\p t^\alpha}
%+\frac1{k}\dfrac{\p}{\p t^{l+1}}\label{zz1}
%\end{equation}
%where $\td_1,\dots,\td_{l}$ are defined in (\ref{zh18a})--(\ref{zh18c}).
%\item The invariant flat metric and the intersection form of the Frobenius manifold structure coincide
%respectively with the
%metric $(\eta^{ij}(t))$ and $(g^{ij}(t))$  on the covering of $\mathcal{M}\setminus\Sigma$.
%\end{enumerate}
%\end{thm}

\prfMM From Theorem \ref{thm-du} and Theorem \ref{DZZ-thm4.4} we already know the existence of a flat metric $(\eta^{ij})$ and its flat local coordinates
$t^1, \dots, t^{l+1}$  on $\mathcal{M}_{k, m}(C_l)$.
By following the lines of the proof of Lemma 2.6 given in \cite{DZ1998} we
can show the existence of a unique weighted homogeneous
polynomial
$$G(t):=G(t^1,\dots,t^{k-1},t^{k+1},\dots,t^l,\frac1{t^{l-m}},\frac{1}{t^l},e^{t^{l+1}})$$
of degree $2$ such that the function
\beq\label{zz2}
F(t)=\frac{1}{2}(t^k)^2t^{l+1}+\frac{1}{2}t^k\dsum_{i,j\ne k}\eta_{i
j}\,t^i t^j+G(t)
\eeq satisfies the equations
\beq \label{zz3}
g^{i j}(t)=\mathcal{L}_E F^{i j}(t),\quad \Gamma^{i j}_m(t)=\td_j\,
c^{ij}_m(t),\quad i,j,m=1,\dots,l+1,
\eeq
where $c^{ij}_m(t)=\frac{\pal F^{ij}(t)}{\pal t^m}$. Obviously, the function $F$ satisfies the
equations
\beq
\frac{\pal^3 F(t)}{\pal t^k \pal t^i \pal t^j}=\eta_{ij},\quad i,j=1,\dots, l+1
\eeq and the
quasi-homogeneity condition \beq {\mathcal L}_E F=2 F+A_{\alpha\beta}t^\alpha t^\beta+B_\alpha t^\alpha+C, \eeq
where $A_{\alpha\beta}, B_{\alpha}, C$ are certain constants. From
the properties of a flat pencil of metrics \cite{Du1} it follows
that $F$ also satisfies the associativity equations
\beq\label{zz7}
c_{m}^{ij}(t)\,c_{q}^{m p}(t)=c_{m}^{ip}(t)\,c_{q}^{m j}(t)
\eeq
for any set of fixed indices $i,j,p,q$. From the definition of the Euler vector field we also have
\[\mathcal{L}_E e=-e.\]
 Thus we have constructed locally the Frobenius manifold structure on $\mathcal{M}_{k,m}(C_l)$.

\ian{It follows from the results of \cite{Du2} that the above locally defined Frobenius manifold structure is actually globally defined on $\mathcal{M}_{k,m}(C_l)$. In fact, by using the definition of the vector fields $e$, $E$ given respectively in \eqref{zh-18}, \eqref{zz1} and Corollary \ref{cor3.5}, we know that $(g_1^{ij})=(g^{ij}({y}))$, $(g_2^{ij})=(\eta^{ij}({y}))$ form a quasi-homogeneous flat pencil of metrics of degree $d=1$ in the sense of \cite{Du2} with $\tau=y^{l+1}$. By using Theorem 2.1 and the remark given before Example 2.1 of \cite{Du2}, we know that the multiplication rule of the Frobenius manifold structure that we defined above, in terms of the local flat coordinates $t^1, \dots, t^{l+1}$, can be represented in a coordinate free form by using the flat metric $\eta$, the intersection form $g$ and the Euler vector fields $E$.
From Proposition \ref{zh-9} and the formulae \eqref{zh-4}, \eqref{zh2-d} we know
that in the coordinates $\tilde{y}_1=y^1, \dots, \tilde{y}_{l}=y^l, \tilde{y}_{l+1}=e^{y^{l+1}}$ the components of the intersection form $g$ are polynomials of these coordinates, while the components
of the  $\eta_{\al\beta}(\tilde{y})$ of the flat metric $\eta$ are polynomials of
\[\tilde{y}_1, \dots, \tilde{y}_{l+1}, \frac{1}{\tilde{y}_{l+1}}, \frac{1}{\tau^l}, \frac{1}{\tau^{l-m}}\]
 when $m\ne 0, l-k$, and are polynomials of
\[\tilde{y}_1, \dots, \tilde{y}_{l+1}, \frac{1}{\tilde{y}_{l+1}}, \frac{1}{\tau^l}\]
when $m=0$ or $m=l-k.$
Thus it follows from Remark \ref{zh-11} that the Frobenius manifold structure is globally defined on $\mathcal{M}_{k, m}(C_l)$.
The theorem is proved. \epf}

 \begin{rem} By using Lemma \ref{add3.2} we know that we can also represent
the Frobenius manifold structure globally in the coordinates $\theta^0, \dots, \theta^l$ introduced in \ref{zh-19}, which correspond to the coordinates $a_0, \dots, a_l$ of
$\mathfrak{M}_{k,m,n}$ under the map $\mathfrak{h}$ given in Theorem \ref{Main2}.
\end{rem}}

\begin{rem}\label{rem5.4} It follows from Remark \ref{rem-1} that the
Frobenius manifold structures which correspond
to the integers $m$ and $l-k-m$ are equivalent. From the above
construction we see that the potential $F$ is in general a
polynomial of $t^1,\dots, t^{l+1}, \dfrac1{t^{l-m}}, \dfrac1{t^l},
e^{t^{l+1}}$, in the particular cases when $m=1$ and $m=l-k-1$ it
does not depend on $\frac1{t^{l}}$ and $\frac1{t^{l-1}}$
respectively. {When $k=l$, $m=0$, the Frobenius manifold structure coincides}
with the one that is constructed in \cite{DZ1998}.
\end{rem}

%%%%%%%%%%%%%%%%%%%%%%%%%%%%%%%%%%%%%%%%
\subsection{Examples}

To end this section we give some examples to illustrate the above
construction of Frobenius manifold structures. For notational convenience, instead of $t^1,\dots,t^{l+1}$ we will
denote the flat coordinates of the metric $\eta^{ij}$ by
$t_1,\dots, t_{l+1}$, and we will also denote $\p_i=\frac{\p}{\p
t_i}$ in the the following examples.

\begin{ex}\label{ex6.1}
$[C_3,k=1]$ Let $R$ be the root system of type $C_3$, take $k=1$, then $d_1=d_2=d_3=1$, and
\begin{eqnarray*}
&&y^1={e^{2 i\pi x_{{4}}}} \left( \xi_{{1}}+\xi_{{2}}+\xi_{{3}} \right),\\
&&y^2={e^{2 i\pi x_{{4}}}} \left(
\xi_{{1}}\xi_{{2}}+\xi_{{1}}\xi_{{3}}+
\xi_{{2}}\xi_{{3}}
\right),\\
&&y^3={e^{2 i\pi x_{{4}}}}\xi_{{1}}\xi_{{2}}\xi_{{3}},\\
&&y^4=2 i\pi x_{{4}},
\end{eqnarray*}
where $\xi_j={e^{2 i\pi \left( x_{{j}}-x_{{j-1}} \right)
}}+{e^{-2 i\pi \left( x_{{j}}-x_{{j-1}} \right) }}$ and
$x_0=0$, $j=1, 2, 3$. The metric $(~,~)^{\sptilde}$ has the form
\begin{equation*}
((dx_i,dx_j)^{\sptilde})=\frac{1}{4\pi^2}\left( \begin
{array}{cccc} ~~1&~~1\,&~~1&~~0\\ ~~1& ~~2&~~2&~~0\\
~~1&~~2&~~3&~~0\\ ~~0&~~0&~~0&-1\end {array}
 \right).
 \end{equation*}

 \noindent {\bf Case I. $m=0$}, i.e., $e=\frac{\p}{\p y^1}-4\frac{\p}{\p y^2}+4\frac{\p}{\p y^3}$.

\noindent We first introduce the variables
 \eqa
&& z^1=y^{{1}}+6\,{e^{y^{{4}}}}, \ z^2=y^{{2}}+4\,y^{{1}}+12\,{e^{y^{{4}}}},\nn\\
&& z^3=y^{{3}}+2\,y^{{2}}+4\,y^{{1}}+8\,{e^{y^{{4}}}},\ z^4=y^4.\nn
 \eeqa
Then the flat coordinates are given by
$$t_1=z^{{1}}-2\,e^{z_4},\ t_2=(z^{{2}}-\frac{1}{6}\,z^{{3}})(z^{{3}})^{-\frac14},
\ t_3=(z^{{3}})^{\frac14},\ t_4=z^{{4}}$$
and the intersection form has the expression
\begin{eqnarray*}
&&g^{11}=2\,t_{{2}}t_{{3}}\,
{e^{t_{{4}}}}+\frac{1}{3}\,{t_{{3}}}^{4}{e^{t_{{4}}}}+4\,
 {e^{2t_{{4}}}} ,\\
 &&g^{12}=\frac{7}{3}\,{t_{{3}}}^{3}{e^{t_{{4}}}}+\frac{7}{2}\,t_{{2}}{e^{t_{{4}}}},~~
  g^{13}=\frac{5}{2}\,t_{{3}}{e^{t_{{4}}}}, ~~ g^{14}=t_1,\\
&& g^{22}=12\,{t_{{3}}}^{2}{e^{t_{{4}}}}-\frac{1}{4}\,{t_{{2}}}^{2}+\frac{1}{12}\,{t_{{3}}}^{3}t
_{{2}}-{\frac {1}{108}}\,{t_{{3}}}^{6}+\frac{1}{4}\,{\dfrac {{t_{{2}}}^{3}}{{t_
{{3}}}^{3}}},\nn\\
&&g^{23}=2\,t_{{1}}+4\,{e^{t_{{4}}}}-\frac{1}{3}\,t_{{2}}t_{{3}}+{\frac {1}{72}}\,{t_{{
3}}}^{4}-\frac{1}{4}\,{\dfrac {{t_{{2}}}^{2}}{{t_{{3}}}^{2}}},\\
&&g^{24}=\frac{3}{4}\,t_{{2}}, ~~g^{33}=\frac{1}{4}\,{\dfrac
{t_{{2}}}{t_{{3}}}}-\frac{1}{12}\,{t_{{3}}}^{2},~~
g^{34}=\frac{1}{4}\,t_{{3}}, ~~g^{44}=1.
\end{eqnarray*}
The potential has the form
\begin{eqnarray*}
&&F=\frac{1}{2}\,{t_{{1}}}^{2}t_{{4}}+\frac{1}{2}\,t_{{1}}t_{{2}}t_{{3}}
-{\frac {1}{48}}\,{t_{{ 2}}}^{2}{t_{{3}}}^{2}+{\frac
{1}{1440}}\, t_{{2}}{t_{{3}}}^{5}-{ \frac
{1}{36288}}\,{t_{{3}}}^{8}\\
&&\qquad
+t_{{2}}t_{{3}}{e^{t_{{4}}}}+\frac{1}{6}\,{t_{{3}}}^{4}{e^{t_{{4}}}}+\frac{1}{2}\,
e^{2t_4} +{\frac {1}{48}}\,{\dfrac {{t_{{2}}}^{3}}{t_{{3}}}
}
\end{eqnarray*}
and the Euler vector field is given by
$$E=t_1{\p_1}+\frac{3}{4}t_2{\p_2}+\frac{1}{4}t_3{\p_3}+{\p_4}.$$

\noindent {\bf Case II. $m=1$}, i.e., $e=\frac{\p}{\p y^1}-4\frac{\p}{\p y^3}$.

\noindent Define
\eqa
&&z^1=y^{{1}}+2\,{e^{y^{{4}}}}, \ z^2=\frac{1}{2}\,y^{{2}}+\frac{1}{4}\,y^{{3}}+y^{{1}}+2\,{e^{y^{{4}}}},\nn\\
&&z^3=\frac{1}{4}\,y^{{3}}-\frac{1}{2}\,y^{{2}}+y^{{1}}-2\,{e^{y^{{4}}}},\ z^4=y^4.\nn
\eeqa
Then the flat coordinates are
\eqa
t_1=z^1-2e^{z^4},\ t_2=\sqrt{z^2},\ t_3=\sqrt{z^3},\ t_4=z^4\nn
\eeqa
and the intersection form is given by
\eqa
&& g^{11}=2\,{t_{{2}}}^{2}{e^{t_{{4}}}}-2\,{t_{{3}}}^{2}{e^{t_{{4}}}}+4\,
 {e^{2t_{{4}}}},\nn\\
 &&g^{12}=3\,t_{{2}}{e^{t_{{4}}}},\ g^{13}=-3\,t_{{3}}{e^{t_{{4}}}},\ g^{14}=t_{{1}},  \nn\\
 &&g^{22}=2\,{e^{t_{{4}}}}+t_{{1}}-\frac{1}{4}\,{t_{{3}}}^{2}-\frac{1}{4}\,{t_{{2}}}^{2},
 \ g^{23}=-\frac{1}{2}\,t_{{2}}t_{{3}},\nn\\
 &&g^{33}=-2\,{e^{t_{{4}}}}+t_{{1}}-\frac{1}{4}\,{t_{{2}}}^{2}-\frac{1}{4}\,{t_{{3}}}^{2},\nn\\
&& g^{24}=\frac{1}{2}\,t_{{2}}, \ g^{34}=\frac{1}{2}\,t_{{3}},\ g^{44}=1.\nn
\eeqa
The potential has the expression
\eqa
&&F=\frac{1}{2}\,t_{{1}}{t_{{2}}}^{2}+\frac{1}{2}\,t_{{1}}{t_{{3}}}^{2}+\frac{1}{2}{t_{{1}}}^{2}\,t_{{4}}
-\frac{1}{48}\,{t_{{2}}}^{4}\nn \\
&&\qquad-\frac{1}{48}\,{t_{{3}}}^{4}-\frac{1}{8}\,{t_{{2}}}^{2}{t_{{3}}}^{2}+{t_{{2}}}^{2}{e^{t_{{4}}
}}-{t_{{3}}}^{2}{e^{t_{{4}}}}+\frac{1}{2}\,{e^{2t_{{4}}}}\nn
\eeqa
and the Euler vector field is given by
$$E=t_1{\p_1}+\frac{1}{2}t_2{\p_2}+\frac{1}{2}t_3{\p_3}+{\p_4}.$$
The Frobenius manifold structure that we obtain for this case is isomorphic
to the one given in Example 2.6 [$A_3,k=2$] of \cite{DZ1998}.
\end{ex}

\begin{ex}\label{ex6.2}
$[C_3,k=2]$ Let $R$ be the root system of type $C_3$, take $k=2$, then $d_1=1,d_2=d_3=2$, and
\begin{eqnarray*}
&&y^1={e^{2 i\pi x_{{4}}}} \left( \xi_{{1}}+\xi_{{2}}+\xi_{{3}} \right),\\
&&y^2={e^{2 i\pi x_{{4}}}} \left(
\xi_{{1}}\xi_{{2}}+\xi_{{1}}\xi_{{3}}+
\xi_{{2}}\xi_{{3}}
\right),\\
&&y^3={e^{2 i\pi x_{{4}}}}\xi_{{1}}\xi_{{2}}\xi_{{3}},\\
&&y^4=2 i\pi x_{{4}},
\end{eqnarray*}
where $\xi_j={e^{2 i\pi \left( x_{{j}}-x_{{j-1}} \right)
}}+{e^{-2 i\pi \left( x_{{j}}-x_{{j-1}} \right) }}$ and
$x_0=0$, $j=1,2,3$. The metric $(~,~)^{\sptilde}$ has the form
\begin{equation*}
((dx_i,dx_j)^{\sptilde})=\frac{1}{4\pi^2}\left( \begin
{array}{cccc} ~~1&~~1\,&~~1&~~0\\ ~~1& ~~2&~~2&~~0\\
~~1&~~2&~~3&~~0\\ ~~0&~~0&~~0&-\frac{1}{2}\end {array}
 \right).
 \end{equation*}

 \noindent {\bf Case I. $m=0$}, i.e., $e=\frac{\p}{\p y^2}-2\frac{\p}{\p y^3}$.
The Frobenius manifold structure that we obtain for this case  is isomorphic
to the one given in Example 2.7 [$B_3,k=2$] of \cite{DZ1998}.

\noindent {\bf Case II. $m=1$}, i.e., $e=\frac{\p}{\p y^2}+2\frac{\p}{\p y^3}$.

\noindent We first introduce the following variables
\eqa
&&z^1=y^{{1}}+2\,{e^{y^{{4}}}}, \ z^2=y^2+4e^{2y^4},\nn\\
&&z^3=2\,y^{{2}}-4\,y^{{1}}{e^{y_{{4}}}}-y^{{3}}+8\,{e^{2y^{{4}}}},\ z^4=y^4.\nn\eeqa
Then the flat coordinates given by
\eqa
t_1=z^1-4e^{z^4},\ t_2=z^{{2}}-2\,z^{{1}}{e^{z^{{4}}}}+6\,{e^{2\,z^{{4}}}},\ t_3=\sqrt{z^3},\ t_4=z^4.\nn
\eeqa
The potential has the expression
\eqa
&&F=\frac{1}{2}\,t_{{2}}{t_{{3}}}^{2}+\frac{1}{4}\,{t_{{1}}}^{2}t_{{2}}+\frac{1}{2}\,{t_{{2}}}^{2}
t_{{4}}-\frac{1}{48}\,{t_{{3}}}^{4}\nn\\
&&\qquad -{\frac {1}{96}}\,{t_{{1}}}^{4} +\, {t_{{3}}}^{2} {e^{2t_{{4}}}}-{t_{{3}}}^{2}t_{{1}}{e^{t_{{4}}}}
+\frac{1}{2}\,{t_{{1}}}^{2}{e^{2t_{{4}}}}+\frac{1}{4}\, {e^{4t_{{4}}}}\nn
\eeqa
and the Euler vector field is given by
$$E=\frac{1}{2}t_1{\p_1}+t_2{\p_2}+\frac{1}{2}t_3{\p_3}+\frac{1}{2}{\p_4}.$$
This Frobenius manifold structure is exactly the one given in Example 2.7 [$B_3,k=2$] of \cite{DZ1998}.
\end{ex}

\begin{ex} \label{ex6.3} $[C_4,k=1,m=0]$
Let $R$ be the root system of type $C_4$, take $k=1$, then
$d_1=d_2=d_3=d_4=1$, and
\begin{eqnarray*}
&&y^1={e^{2\,i\pi\,x_{{5}}}} \left( \xi_{{1}}+\xi_{{2}}+\xi_{{3}}+\xi_{{4}} \right),\\
&&y^2={e^{2\,i\pi\,x_{{5}}}} \dsum_{1\leq a<b\leq 4}
\xi_{{a}}\xi_{{b}},\\
&&y^3={e^{2\,i\pi\,x_{{5}}}} \dsum_{1\leq a<b<c\leq 4}
\xi_{{a}}\xi_{{b}}\xi_{{c}},\\
&&y^4={e^{2\,i\pi\,x_{{5}}}}\xi_{{1}}\xi_{{2}}\xi_{{3}}\xi_{{4}}, \\
&&y^5=2\,i\pi\,x_{{5}},
\end{eqnarray*}
where $\xi_j={e^{2 i\pi \left( x_{{j}}-x_{{j-1}} \right)
}}+{e^{-2 i\pi \left( x_{{j}}-x_{{j-1}} \right) }}$ and
$x_0=0$, $j=1,2,3,4$. The metric $(~,~)^{\sptilde}$ has the form
\begin{equation*}
((dx_i,dx_j)^{\sptilde})=\frac{1}{4\pi^2}\left( \begin
{array}{rrrrr}
~~1&~~1&~~1&~~1&~~0\\\noalign{\medskip}1&2&2&2&0\\\noalign{\medskip}1&2&3&3&0\\
\noalign{\medskip}1&2&3&4&0\\ \noalign{\medskip}0&0&0&0&-1\end
{array}
 \right).\end{equation*}
Introduce the variables
\eqa
&& z^1=y^{{1}}+8\,{e^{y^{{5}}}},\ z^2=y^{{2}}+6\,y^{{1}}+24\,{e^{y^{{5}}}},\nn\\
&&z^3=y^{{3}}+4\,y^{{2}}+12\,y^{{1}}+32\,{e^{y^{{5}}}},\ z^5=y^5,\nn\\
&&z^4=y^{{4}}+2\,y^{{3}}+8\,y^{{1}}+4\,y^{{2}}+16\,{e^{y^{{5}}}},
\nn \eeqa
and
\eqa
&&w_1=z^1-2e^{z^5},\
w_2=(z^2-\frac{1}{6}\,z^3+\frac{1}{30}\,z^4)(z^4)^{-\frac16},\nn\\
&& w_3=(z^3-\frac{1}{4}\,z^4) (z^4)^{-\frac23},\
w_4=(z^4)^{\frac16},\ w_5=z^5.\nn
\eeqa
Then we have the expression of the flat coordinates
\beq
t_1=w_1,\
t_2=w_2-\frac{1}{12}w_3^2\,w_4,\ t_3=w_3 w_4,\ t_4=w_4,\ t_5=w_5.\nn
\eeq
The potential $F$ is given by
\begin{eqnarray*}
&&F=\frac
{1}{2}\,{t_{{1}}}^{2}t_{{5}}+{\frac {1}{2}}\,t_{{1}}
t_{{2}}t_{{4}}-{\frac {1}{6912}}
\,{t_{{3}}}^{4}+{\frac{1}{17280}}\,{t_{{3}}}^{3}{t_{{4}}}^{3}\\
&&\qquad -{\frac
{1}{288}}\,t_{{2}}t_{{4}}{t_{{3}}}^{2}-{\frac
{1}{34560}}\,{t_{{4}}}^{ 6}{t_{{3}}}^{2}+{\frac
{1}{24}}\,t_{{1}}{t_{{3}}}^{2}+{\frac
{1}{1440}}\,t_{{3}}{t_{{4}}}^{4 }t_{{2}}\\
&&\qquad -{ \frac
{1}{48}}\,{t_{{2}}}^{2}{t_{{4}}}^{2}-{\frac
{1}{60480}}\,{t_{{4}}}^{7}t_{{2}}+{\frac {1}{
345600}}\,{t_{{4}}}^{9}t_{{3}}-{\frac {1}{
7603200}}\,{t_{{4}}}^{12}\\
&&\qquad+{\frac {1}{12}
}\,{e^{t_{{5}}}}{t_{{3}}}^{2}+\frac
{1}{6}\,{e^{t_{{5}}}}t_{{3}}{t_{{4}}}^{3}+{\frac
{1}{120}}\,{e^{t_{{5}}} }{t_{{4}}}^{6}+
 t_{{2}}t_{{4}}{e^{t_{{5}}}}+\frac {1}{2}\,
{e^{2t_{{5}}}}\\
&&\qquad +{\frac {1}{24}}\,{\dfrac
{t_{{3}}{t_{{2}}}^{2}}{t_{{4}}}}-{\frac {1} {216}}\,{\dfrac
{t_{{2}}{t_{{3}}}^{3}}{{t_{{4}}}^{2}}}+{ \frac
{1}{4320}}\,{\dfrac {{t_{{3}}}^{5}}{{t_{{4}}}^{3}}}
\end{eqnarray*}
with the Euler vector field
$$E=t_1{\p_1}+\frac{5}{6}t_2{\p_2}+\frac{1}{2}t_3{\p_3}+\frac{1}{6}t_4{\p_4}+{\p_5}.$$
\end{ex}

\begin{ex}\label{ex6.4} $[C_4,k=2,m=0] \ $Let $R$ be the root system of type $C_4$, take $k=2$, then
$d_1=1,d_2=d_3=d_4=2$, and
\begin{eqnarray*}
&&y^1={e^{2 i\pi x_{{5}}}} \left( \xi_{{1}}+\xi_{{2}}+\xi_{{3}}+\xi_{{4}} \right),\\
&&y^2={e^{4 i\pi x_{{5}}}} \dsum_{1\leq a<b\leq 4}
\xi_{{a}}\xi_{{b}},\\
&&y^3={e^{4 i\pi x_{{5}}}} \dsum_{1\leq a<b<c\leq 4}
\xi_{{a}}\xi_{{b}}\xi_{{c}},\\
&&y^4={e^{4 i\pi x_{{5}}}}\xi_{{1}}\xi_{{2}}\xi_{{3}}\xi_{{4}}, \\
&&y^5=2 i\pi x_{{5}},
\end{eqnarray*}
where $\xi_j$ are defined as in the last example. The metric $(~,~)^{\sptilde}$ has the form
\begin{equation*}
((dx_i,dx_j)^{\sptilde})=\frac{1}{4\pi^2}\left( \begin
{array}{rrrrr}
~~1&~~1&~~1&~~1&~~0\\\noalign{\medskip}1&2&2&2&0\\\noalign{\medskip}1&2&3&3&0\\
\noalign{\medskip}1&2&3&4&0\\
\noalign{\medskip}0&0&0&0&-\frac{1}{2}\end {array}
 \right).\end{equation*}
Introduce the following variables
\eqa
&&z^1=y^{{1}}+8\,{e^{y^{{5}}}}, \ z^5=y^5,\nn\\
&&z^2=y^{{2}}+6\,y^{{1}}{e^{y^{{5}}}}+24\, {e^{2y^{{5}}}},\nn\\
&&z^3=y^{{3}}+4\,y^{{2}}+12\,y^{{1}}{e^{y^{{5}}}}+32\, {e^{2y^{{5}}}},\nn\\
&&z^4=y^{{4}}+2\,y^{{3}}+4\,y^{{2}}+8\,y^{{1}}{e^{y^{{5}}}}+16\, {e^{2y_{{5}}}}.\nn
\eeqa
Then the flat coordinates are given by
\eqa
&&t_1=z^{{1}}-4e^{z^5},\ t_2=z^2-2z^1e^{z^5}+6e^{2z^5},\nn\\
&& t_3=(z^3-\frac{1}{6}\,z^4)(z^4)^{-\frac14},\ t_4=(z^4)^{\frac14},\ t_5=z^5.\nn
\eeqa
The Euler vector field and the potential are given respectively by
\eqa
&&E=\frac{1}{2}t_1{\p_1}+t_2{\p_2}+\frac{3}{4}t_3{\p_3}+\frac{1}{4}t_4{\p_4}+\frac{1}{2}{\p_5}.\nn\\
&&F=\frac{1}{2}\,{t_{{2}}}^{2}t_{{5}}+\frac{1}{4}\,{t_{{1}}}^{2}t_{{2}}+\frac{1}{2}\,t_{{4}}t_{{3
}}t_{{2}}+{\frac {1}{1440}}\,{t_{{4}}}^{5} t_{{3}}-{\frac
{1}{48}}\,{t_{{4}}}^{2}{t_{{3}}}^{2}\nn\\
&&\qquad-{\frac
{1}{36288}}\,{t_{{4}}}^{8}-{\frac {1}{96}}\,{t_{{1}}}^{4}
+\frac{1}{2}\,{e^{2\,t_{{5}}}}{t_{{1}}}^{2}+\frac{1}{6}\,{e^{t_{{5}}}}t_{{1}}{t_{{4}}}^{
4}+\frac{2}{3}\,{t_{{4}}}^{4}{e^{2\,t_{{5}}}}\nn\\
&&\qquad+{e^{t_{{5}}}}t_{{1}}t_{{3}}t_{{4}}+t_{{3}}t_{{4}}{e^{2\,t
_{{5}}}}+\frac{1}{4}\,{e^{4\,t_{{5}}}}+{\frac {1}{48
}}\,{\dfrac {{t_{{3}}}^{3}}{t_{{4}}}}.\nn
\eeqa
\end{ex}

In the following, we present more examples and omit all computations and only list the
potentials and the Euler vector fields.

\begin{ex}$[C_5,k=1,m=2]$
Let $R$ be the root system of type $C_5$, take $k=1, m=2$, then
\eqa
&&F=\frac{1}{2}\,t_{{6}}{t_{{1}}}^{2}+\frac{1}{2}\,t_{{1}}t_{{2}}t_{{3}}+\frac{1}{2}\,t_{{1}}t_{{4}}t_{{5}}
-{\frac {1}{72}}\,{t_{{3}}}^{4}{t_{{5}}}^{4}
-\frac{1}{8}\,t_{{2}}t_{{3}}t_{{4}}t_{{5}}\nn\\
&&\qquad -{\frac
{1}{2268}}\,{t_{{5}}}^{8}-{\frac {1}{36288}}\,{t_{{3}}}^{8}
-\frac{1}{48}\,{t_{{3}}}^{2}{t_{{2}}}^{2}-\frac{1}{48}\,{t_{{4}}}^{2}{t_{{5}}}^{2}
+\frac{1}{24}\,{t_{{5}}}^{4}t_{{2}}t_{{3}}\nn\\
&&\qquad +{\frac {1}{96}}\,{t_{{3}}}^{4}t_{{4}}t_{{5}}+{\frac
{1}{1440}}\,{t _{{3}}}^{5}t_{{2}}+{\frac
{1}{360}}\,t_{{4}}{t_{{5}}}^{5}+t_{{2}}t_{{3}}{e^{t_{{6}}}}-
t_{{4}}t_{{5}}{e^{t_{{6}}}}\nn\\
&&\qquad -\frac{2}{3}\,{t_{{5}}}^{4}{e^{t_{{6}}}}+\frac{1}{6}\,{t_{{3}}}^{4}{e^{t_{{6}}}}+\frac{1}{2}\,
{e^{2t_{{6}}}}+\frac{1}{48}\,{\frac {{t_{{2}}}^{3}}{t_{{3}}}}+{\frac
{1}{192}}\,{\frac {{t_{{4}}}^{3}}{t_{{5}}} }.\nn
\eeqa
The Euler vector field is given by
$$E=t_1{\p_1}+\frac{3}{4}t_2{\p_2}+\frac{1}{4}t_3{\p_3}+\frac{3}{4}t_4{\p_4}+\frac{1}{4}t_5{\p_5}+{\p_6}.$$
\end{ex}

\begin{ex}$[C_6,k=1,m=2]$
Let $R$ be the root system of type $C_6$, take $k=1$, then

\eqa
&&F= \frac{1}{2}\,{t_{{1}}}^{2}t_{{7}}+\frac{1}{24}\,
t_{{1}}{t_{{3}}}^{2}+\frac{1}{2}\,t_{{1}}t_{{2}}t_{{4
}}+\frac{1}{2}\,t_{{1}}t_{{5}}t_{{6}}-\frac{1}{48}\,{t_{{2}}}^{2}{t_{{4}}}^{2}\nn \\
&&\qquad+{\frac {1}{17280}}\,{t_{{4}}}^{
3}{t_{{3}}}^{3}-\frac{1}{48}\,{t_{{5}}}^{2}{t_{{6}}}^{2}+{\frac
{1}{360}}\,t_{{5}}{t_{{6}}}^{5}
+{\frac {1}{288}}\,{t_{{3}}}^{2}{t_{{6}}}^{4}\nn \\
&&\qquad+{\frac {17}{5760}}\,{t_{{6}}}^{4}{t_{{4}}}^{6}-{\frac
{1}{60480}}\,{t_{{4}}}^
{7}t_{{2}}-{\frac{1}{72}}\,{t_{{6}}}^{4}{t_{{4}}}^{3}t_{{3}}-{\frac {1}{288}}\,t_{{2}}{t_{{3}}}^{2}t_{{4}}\nn \\
&&\qquad+{\frac {1}{1440}}\,t_{{2}}t_{{3}}{t_{{4}}}^{4}-{\frac
{1}{96}}\,{t_{{3}}}^{2}t_{{5}}t_{{6} }-{\frac
{1}{2268}}\,{t_{{6}}}^{8}-{\frac {1}{34560}}\,{t_{{4}}}^{6}{t
_{{3}}}^{2}\nn \\
&&\qquad-{\frac {1}{6912}}\,{t_{{3}}}^{4}-{\frac {1}{7603200}}\,{t_
{{4}}}^{12}+\frac{1}{24}\,{t_{{6}}}^{4}t_{{2}}t_{{4}}-{\frac {1}{960}}\,t_{{6}}{t_{{4}}}^{6}t_{{5}}\nn\\
&&\qquad+{\frac
{1}{345600}}\,{t_{{4}}}^{9}t_{{3}}-\frac{1}{8}\,t_{{6}}t_{{2}}t_{{4}}t_{{5}}
+{\frac {1}{96}}\,t_{{6}}{t_{{4}}}^{3}t_{{3}}t_{{5
}}+\frac{1}{6}\,{t_{{4}}}^{3}t_{{3}}{e^{t_{{7}}}}\nn\\
&&\qquad+{\frac {1}{120}}\,{t_{{4}}}^{6}{e^{t_{{7}}}}+t_{{2
}}t_{{4}}{e^{t_{{7}}}}-t_{{5}}t_{{6}}{e^{t_{{7}}}}+\frac{1}{12}\,{t_{{3}
}}^{2}{e^{t_{{7}}}}-\frac{2}{3}\,{t_{{6}}}^{4}{e^{t_{{7}}}}\nn\\
&&\qquad+\frac{1}{2}\,{e^{2\,t_{{7}}}}+\frac{1}{24}\,{\frac {{t_{{2}}}^{2
}t_{{3}}}{t_{{4}}}}-{\frac {1}{216}}\,{\frac
{t_{{2}}{t_{{3}}}^{3}}{{t_{{4}}}^{2}}} +{\frac {1}{4320}}\,{\frac
{{t_{{3}}}^{5}}{{t _{{4}}}^{3}}}+{\frac {1}{ 192}}\,{\frac
{{t_{{5}}}^{3}}{t_{{6}}}},\nn
\eeqa
and the Euler vector field is given by
$$E=t_1{\p_1}+\frac{5}{6}t_2{\p_2}+\frac{1}{2}t_3{\p_3}+\frac{1}{6}t_4{\p_4}+\frac{3}{4}t_5{\p_5}
+\frac{1}{4}t_6{\p_6}+{\p_7}.$$
\end{ex}

%%%%%%%%%%%%%%%%%%%%%%%%%%%%%%%%%%%%%%%%%%%%%

%%%%%%%%%%%%%%%%%%%%%%%%%%%%%%%%%%%%%%%%%%%%%%%%%
%%%%%%%%%%%%%%%%%%%%%%%%%%%%%%%%%%%%%%%%%%%%%%%%%
\section{On the Frobenius manifold structures related to the root system of type $B_l$ and $D_l$}
\label{sec-6}

For the root system $R$ of type $B_l$, we also define an indefinite
metric $(~,~)^{\sptilde}$ on $\widetilde{V}_\mathbb{C} = \widetilde{V} \otimes_\mathbb{R} \mathbb{C}$
where $\widetilde V$ is the orthogonal direct sum of $V$ and $\mathbb
R$. Here $V$ is endowed with the $W$-invariant Euclidean metric
\beq
(d x_s, dx_n)^{\sptilde}=\frac{1}{4\pi^2}[(1-\frac12\delta_{n,l}) s-\frac{l}4\,\delta_{n,l}\delta_{s,l}],
\quad 1\le s\le n\le l
\eeq
and $\mathbb R$ is endowed with the metric
\beq
(dx_{l+1},dx_{l+1})^{\sptilde}=-\frac1{4\pi^2 d_k}.
\eeq
Here the numbers $d_k$ are defined in \eqref{addDZ2.1} and \eqref{addDZ2.2}.
The basis of the $W_a$-invariant Fourier polynomials $y_1(\bx),\dots,y_{l-1}(\bx)$,
 $y_l(\bx)$ are defined in \eqref{add1.10-a}--\eqref{add1.10-c}.
The generators of the ring $\widetilde{W}^{(k)}(B_l)$ have the same form as that of \eqref{ip-a} and \eqref{ip-b}.
It is easy to see that the components of the resulting metric $(g^{ij}(y))$ coincide
with those corresponding
to the root system of type $C_l$ if we perform the change of coordinates \ian{
\eqa
&& y^j\mapsto \bar y^j=y^j,\ y^{l+1}\mapsto \bar{y}^{l+1}=y^{l+1},
~~ j=1,\dots,l-1,\nn\\
 && y^l\mapsto {\bar y}^l=(y^l)^2-\dsum_{j=0}^{l-1}2^{l-s}y^s e^{(k-d_s)y^{l+1}}   \eeqa
for $1\le k\le l-1$ and
\eqa
&& y^j\mapsto \bar y^j=y^j,\ y^{l+1}\mapsto \bar{y}^{l+1}=\frac12 y^{l+1},
~~j=1,\dots,l-1,\nn\\
 && y^l\mapsto {\bar y}^l=(y^l)^2-\dsum_{j=0}^{l-1}2^{l-s}y^s e^{\frac{1}{2}(l-s)y^{l+1}}    \eeqa}
for the case when $k=l$. Thus, the Frobenius manifold structure that we obtain in this way from $B_l$,
by fixing the $k$-th vertex of the corresponding Dynkin diagram,
is isomorphic to the one that we obtain from $C_l$ by choosing the $k$-th vertex of the Dynkin diagram of $C_l$.

%%%%%%%%%%%%%%%%%%%%%%%%%%%%%%%%%%%%%%%%

For the root system $R$ of type $D_l$, the indefinite  metric $(~,~)^{\sptilde}$
on $\widetilde V=V\oplus\mathbb R$ is defined through  the $W$-invariant Euclidean metric
\eqa\label{add8.2}
&&(d x_s,
dx_n)_1^{\sptilde}=\dfrac{s}{4\pi^2},\quad 1\le s\le n\le l-2,\nn\\
&&(d x_s, dx_n)^{\sptilde}=\frac{s}{8\pi^2},\quad 1\le s \le l-2, n=l-1, l-2, \\
&&(d x_{l-1}, dx_{l-1})^{\sptilde}=(d x_{l}, dx_{l})^{\sptilde}=\dfrac{l}{16\pi^2},\quad
(d x_{l-1}, dx_{l})^{\sptilde}=\dfrac{l-2}{16\pi^2},\nn
\eeqa
and  \beq \label{add8.3}
(dx_{l+1},dx_{l+1})^{\sptilde}=-\frac1{4\pi^2 d_k}.
 \eeq Here the numbers $d_k$ are defined in \eqref{addDZ2.9}.
  The set of generators for the ring ${\mathcal A}={\mathcal A}^{(k)}(D_l)$ have
  the same form as that of \eqref{ip-a} and \eqref{ip-b},
where $y_j(\bx)$ are  defined  in \eqref{add1.15-a} and \eqref{add1.15-b}.
It can be verified that the components of the resulting metric $(g^{ij}(y))$ coincide
with those corresponding to the root system of type $C_l$ if we perform the change of coordinates \ian{
\eqa
&&y^j \mapsto \bar y^j=y^j, ~~j=1,\cdots, l-2,~l+1,\nn \\
&&y^{l-1}\mapsto \bar
y^{l-1}=y^{l-1}y^{l}-\frac{1}{4}\dsum_{s=0}^{l-2}[2^{l-s}-(-2)^{l-s} ]\ y^{s}e^{(k-d_s)y^{l+1}},\\
&&y^{l}\mapsto \bar
y^{l}\mapsto \bar
y^{l}=(y^l)^2+(y^{l-1})^2-\frac{1}{2}\dsum_{s=0}^{l-2}[2^{l-s}+(-2)^{l-s} ]\ y^{s}e^{(k-d_s)y^{l+1}}
 \nn\eeqa
 for $1\le k\le l-2$ and}
 \ian{\eqa
&&y^j \mapsto \bar y^j=y^j, ~~j=1,\cdots, l-2; \quad y^{l+1} \mapsto \bar y^{l+1}=2y^{l+1}, \nn\\
&&y^{l-1}\mapsto \bar
y^{l-1}=y^{l-1}y^{l}-\frac{1}{4}\dsum_{s=0}^{l-2}[2^{l-s}-(-2)^{l-s} ]\ y^{s}e^{\frac{1}{2}(l-s-1)y^{l+1}},\\
&&y^{l}\mapsto \bar
y^{l}=(y^l)^2e^{\frac{1}{2}y^{l+1}}+(y^{l-1})^2e^{-\frac{1}{2}y^{l+1}}-\frac{1}{2}\dsum_{s=0}^{l-2}[2^{l-s}+(-2)^{l-s} ]
\ y^{s}e^{\frac{1}{2}(l-s-1)y^{l+1}}
 \nn\eeqa
 for the case $k=l-1$ and }
 \ian{\eqa
&&y^j \mapsto \bar y^j=y^j, ~~j=1,\cdots, l-2; \quad y^{l+1} \mapsto \bar y^{l+1}=2y^{l+1}, \nn\\
&&y^{l-1}\mapsto \bar
y^{l-1}=y^{l-1}y^{l}-\frac{1}{4}\dsum_{s=0}^{l-2}[2^{l-s}-(-2)^{l-s} ]\ y^{s}e^{\frac{1}{2}(l-s-1)y^{l+1}},\\
&&y^{l}\mapsto \bar
y^{l}=(y^l)^2+(y^{l-1})^2e^{y^{l+1}}-\frac{1}{2}\dsum_{s=0}^{l-2}[2^{l-s}+(-2)^{l-s} ]\ y^{s}e^{\frac{1}{2}(l-s)y^{l+1}}
 \nn\eeqa
 for the case $k=l$.} Thus, the Frobenius manifold structure
that we obtain in this way from $D_l$, by fixing the $k$-th vertex
of the corresponding Dynkin diagram, is isomorphic to the one that
we obtain from $C_l$ by choosing the $k$-th vertex of the Dynkin
diagram of $C_l$.

\section{LG superpotentials for the Frobenius manifolds of $\mathcal{M}_{k,m}(C_l)$-type}\label{sec-7}

We consider a particular class of \ian{LG superpotentials consisting of} cosine-Laurent series of one variable with tri-degree $(2k,2m,2n)$,
these being functions of the form\footnote{When $k=1$ and $m=n=0$, this reduces to
$\lambda(\varphi)=a_1+a_0\cos^2(\varphi)$.
If we set
$$\cos^2(\varphi)=\frac{1+\cos(2\varphi)}{2}, \quad a_0=-4e^{\frac{t_2}{2}}, \quad
a_1=t_1+2e^{\frac{t_2}{2}},\quad p=2\varphi,$$
then the LG superpotential is rewritten as
$$\lambda(p)=t_1-2e^{\frac{t_2}{2}}\cos(p),$$
which is exactly the LG superpotential of the $\mathbb{CP}^1$-model obtained in Example I.1
\cite{Du1}.}
{\eqa
\lambda(\varphi)=\left(\cos^{2}(\varphi)-1\right)^{-m}
\dsum_{j=0}^{k+m+n} a_j \cos^{2(k+m-j)}(\varphi),
 \label{fm2.1}
\eeqa}
where all $a_j \in \mathbb{C}$, $m,n\in \mathbb{Z}_{\geq 0}$ and $k\in \mathbb{N}$.
The cosine is considered as an analytic function on the cylinder $\varphi\simeq \varphi+2\pi$.
\ian{
We denote by $\mathfrak{M}_{k,m,n}$ the space of this kind of cosine Laurent series
with the following conditions:
\eqa
&&a_0 a_{k+m+n}\ne 0,\quad \textrm{when}\ m= 0;\label{cnd-a-1}\\
&&a_0 \ne 0, \ \sum_{j=0}^{k+m+n}a_j \ne 0,\quad \textrm{when}\
n= 0;\label{cnd-a-2}\\
&&a_0 a_{k+m+n}\ne 0, \ \sum_{j=0}^{k+m+n}a_j\ne 0,\quad \textrm{when}\ m n\ne 0.\label{cnd-a-3}
\eeqa
}
By analogy  with the construction in \cite{Du1,Ber3,Z2007},
the space $\mathfrak{M}_{k,m,n}$ carries a natural structure of Frobenius manifold.
The invariant inner product
$\eta$ and the intersection form $g$ of two vectors $\p'$, $\p''$
tangent to $\mathfrak{M}_{k,m,n}$ at a point $\lambda(\varphi)$ can be defined by
the following formulae
\eqa
&&{\eta}(\p',\p'')=(-1)^{k+1}\dsum_{|\lambda|<\infty}
\res_{d\lambda=0}
\dfrac{\p'(\lambda(\varphi)d\varphi)\p''(\lambda(\varphi)d\varphi)}{d\lambda(\varphi)},
\label{fm2.3}\\
&&{g}(\p',\p'')=-\dsum_{|\lambda|<\infty}\res_{d\lambda=0}
\dfrac{\p'(\log\lambda(\varphi)d\varphi)\p''(\log\lambda(\varphi)d\varphi)}{d\log\lambda(\varphi)}.
\label{fm2.4}\eeqa
In these formulae, the derivatives
$\p'(\lambda(\varphi)d\varphi)$ $etc.$ are to be calculated keeping $\varphi$ fixed.
The formulae \eqref{fm2.3} and \eqref{fm2.4} uniquely determine
multiplication of tangent vectors on $\mathfrak{M}_{k,m,n}$ assuming that the
Euler vector field $E$  has the form
\eqa
E=\dsum_{j=0}^{k+m+n}a_j\frac{\p}{\p a_j}.\label{fm2.5}\eeqa
For tangent vectors $\p'$, $\p''$ and $\p'''$ to $\mathfrak{M}_{k,m,n}$  , one has \beq
c(\p',\p'',\p''')=-\dsum_{|\lambda|<\infty} \res_{d\lambda=0}
\dfrac{\p'(\lambda(\varphi)d\varphi)\p''(\lambda(\varphi)d\varphi)\p'''
(\lambda(\varphi)d\varphi)}{d\lambda(\varphi)d\varphi}.
\label{fm2.6}\eeq
The canonical coordinates $u_1,\cdots, u_{k+m+n+1}$ for this multiplication are
the critical values of $\lambda(\varphi)$ and
\beq \p_{u_\alpha}\cdot \p_{u_\beta}=\delta_{\alpha\beta}\p_{u_\alpha}, \quad
\mbox{where}\quad \p_{u_\alpha}=\frac{\p}{\p {u_\alpha}}\label{fm2.7}\eeq
\ian{(these are defined only on the semi-simple locus of the manifold).}

For clarity, we use the notation
\eqa && \lambda(P)=(P^2-1)^{-m} \dsum_{j=0}^{l} a_j P^{2(k+m-j)}, \quad l=k+m+n,
\nn\\
&& \dot{\lambda}(P)=\frac{d\lambda(P)}{dP},\quad P=\cos(\varphi),\quad P'(\varphi)=\dfrac{dP}{d\varphi}=-\sin(\varphi)
\label{fm2.8} \eeqa
and
\eqa \lambda(\varphi)=a_0 (P^2-1)^{-m}  P^{-2n} \prod_{j=1}^l(P^2-p_j^2), \quad a_0=e^{2k\bi \varphi_{l+1}},\quad
p_j=P(\varphi_j).\label{fm2.9} \eeqa
Here $P(\varphi)$ has no relation with the function $P(u)$ used in \eqref{P1}. Without confusion, we always use $\lambda(P)$ instead of $\lambda(\varphi)$.
\ian{On comparing coefficients in the two expansions (\ref{fm2.1}) and (\ref{fm2.9}) of the superpotential we obtain expressions for the $a_i$ in terms of $a_0$ and the $p_j\,.$}

Before proceeding to the main result, we give some useful identities.

\begin{lem}
\beq \lambda'(\varphi_j)=\left.\dfrac{2P P'(\varphi) \lambda(\varphi)}{P^2-p_j^2}\right|_{\varphi=\varphi_j},\quad
j=1,\cdots,l.\label{fm2.16} \eeq
\end{lem}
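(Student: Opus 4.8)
The plan is to compute $\lambda'(\varphi)$ directly from the product representation \eqref{fm2.9} and then specialize to the critical points $\varphi=\varphi_j$. Writing $\lambda(\varphi)=a_0\,(P^2-1)^{-m}P^{-2n}\prod_{s=1}^l(P^2-p_s^2)$ with $P=\cos\varphi$, I would take the logarithmic derivative with respect to $\varphi$, treating $P$ as a function of $\varphi$ with $\tfrac{dP}{d\varphi}=P'(\varphi)=-\sin\varphi$. This gives
\[
\frac{\lambda'(\varphi)}{\lambda(\varphi)}=2PP'(\varphi)\left(-\frac{m}{P^2-1}-\frac{n}{P^2}+\sum_{s=1}^l\frac{1}{P^2-p_s^2}\right).
\]
Now evaluate at $\varphi=\varphi_j$, i.e.\ at $P=p_j$. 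Since $p_j$ is a simple root of the numerator factor $\prod_s(P^2-p_s^2)$ (generically, under the assumption $a_0a_l\ne 0$ and distinctness of critical values), all terms in the parenthesis are finite at $P=p_j$ except the single term $\dfrac{1}{P^2-p_j^2}$, which has a simple pole; but $\lambda(\varphi)$ itself vanishes there to first order through the same factor $(P^2-p_j^2)$. So the product $\lambda(\varphi)\cdot\dfrac{1}{P^2-p_j^2}$ is regular, and in the limit the finite terms (multiplied by $\lambda(\varphi_j)$) drop out while
\[
\lambda'(\varphi_j)=2p_jP'(\varphi_j)\cdot\lim_{\varphi\to\varphi_j}\frac{\lambda(\varphi)}{P^2-p_j^2}
=\left.\frac{2PP'(\varphi)\,\lambda(\varphi)}{P^2-p_j^2}\right|_{\varphi=\varphi_j},
\]
which is precisely \eqref{fm2.16}. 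The right-hand side is understood as the value of the regular function $\dfrac{2PP'(\varphi)\lambda(\varphi)}{P^2-p_j^2}$ extended across the removable singularity at $\varphi=\varphi_j$.

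There is essentially no obstacle here; the only point requiring a word of care is the interpretation of the right-hand side of \eqref{fm2.16} as the removable-singularity value rather than a naive substitution (which would be $0/0$). I would phrase the argument so that both sides are manifestly the same meromorphic function of $\varphi$ near $\varphi_j$ after cancelling the common factor $(P^2-p_j^2)$, so that the identity holds as an identity of analytic functions and then in particular at $\varphi=\varphi_j$. An equivalent and perhaps cleaner route is to write $\lambda(\varphi)=\mu(\varphi)\,(P^2-p_j^2)$ with $\mu$ regular and nonvanishing at $\varphi_j$, differentiate by the product rule to get $\lambda'(\varphi)=\mu'(\varphi)(P^2-p_j^2)+\mu(\varphi)\cdot 2PP'(\varphi)$, evaluate at $\varphi_j$ to kill the first term, and recognize $\mu(\varphi_j)=\lim_{\varphi\to\varphi_j}\lambda(\varphi)/(P^2-p_j^2)$; this makes the computation completely transparent and avoids any manipulation of divergent series.

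Finally, I would note that this lemma is the basic technical device for the residue formulae \eqref{fm2.3}, \eqref{fm2.4} and \eqref{fm2.6}: it expresses the derivative of $\lambda$ at a critical point in terms of $\lambda$ itself, which is exactly what is needed to rewrite residues at $d\lambda=0$ (equivalently at $\varphi=\varphi_j$, since $d\lambda(\varphi)=\lambda'(\varphi)d\varphi$ and the critical points of $\lambda$ on the cylinder away from $P=0,\pm1$ are the $\varphi_j$) in terms of the coefficients $a_j$ and the canonical coordinates. So after establishing \eqref{fm2.16} I expect the subsequent identities to follow by the same bookkeeping, applied also at the remaining critical points $\varphi=0,\tfrac{\pi}{2},\pi,\tfrac{3\pi}{2}$ coming from the factors $(P^2-1)^{-m}$ and $P^{-2n}$.
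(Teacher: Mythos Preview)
Your proof is correct and follows the same route as the paper: take the logarithmic derivative of the product formula \eqref{fm2.9} to obtain
\[
\lambda'(\varphi)=2PP'(\varphi)\,\lambda(\varphi)\Bigl(\sum_{s=1}^l\frac{1}{P^2-p_s^2}-\frac{m}{P^2-1}-\frac{n}{P^2}\Bigr),
\]
and then evaluate at $\varphi=\varphi_j$, interpreting the right-hand side via the removable singularity (the paper states this step in one line; your discussion of the cancellation is a welcome elaboration).

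One correction to your closing remarks: the $\varphi_j$ are the \emph{zeros} of $\lambda$, not its critical points. Indeed the lemma itself shows $\lambda'(\varphi_j)\ne 0$ generically. The critical points where $d\lambda=0$ are the $\psi_\alpha$ introduced in \eqref{fm2.13}, and the residues in \eqref{fm2.3}--\eqref{fm2.6} are taken at $\varphi=\psi_\alpha$, not at $\varphi=\varphi_j$. Keep the two families straight when you move on to Lemma~\ref{lem1.2} and the computation of $\eta_{\alpha\beta}(u)$, $g_{\alpha\beta}(u)$.
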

\begin{proof} This follows from
\beq \lambda'(\varphi)=2PP'(\varphi)\lambda(\varphi)\left(\dsum_{j=1}^l\frac{1}{P^2-p_j^2}
-\frac{m}{P^2-1}-\frac{n}{P^2}\right) \nn\eeq
and the definition of $\lambda(\varphi)$ in \eqref{fm2.9}.
 \end{proof}

Let us factorize
\beq
\lambda'(\varphi)=2k a_0 (P^2-1)^{-m-1} P^{-2n-1}P'(\varphi) \prod_{\alpha=1}^{l+1}(P^2-q_\alpha^2), \quad
q_\alpha=P(\psi_\alpha),\label{fm2.13}\eeq
where all $q_\alpha^2$ are distinct. When $m=0$, we choose $P'(\psi_{l+1})=0$, that is to say,
 $$\psi_{l+1}=0,\pi, \quad \mbox{i.e.,}\quad q_{l+1}=P(\psi_{l+1})=1.$$

\begin{lem} \label{lem1.2} For $1 \leq \alpha \leq l+1$, we have
\beq
\lambda''(\psi_\alpha)=\left.\dfrac{c_{\alpha,m} P P'(\varphi) \lambda'(\varphi)}{P^2-q_\alpha^2}\right|_{\varphi=\psi_\alpha},\quad
c_{\alpha,m}=2-\delta_{\alpha,l+1}\delta_{m,0}. \label{fm2.17}
\eeq
\end{lem}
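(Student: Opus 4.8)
The plan is to derive \eqref{fm2.17} from the product representation \eqref{fm2.13} of $\lambda'$ by logarithmic differentiation, exactly as \eqref{fm2.16} was obtained from the product representation \eqref{fm2.9} of $\lambda$. Since $P=\cos\varphi$ we have $P'(\varphi)=-\sin\varphi$ and $P''(\varphi)=-\cos\varphi=-P$, so differentiating $\log$ of \eqref{fm2.13} gives the identity of meromorphic functions on the cylinder
\[
\frac{\lambda''(\varphi)}{\lambda'(\varphi)}
=\sum_{\beta=1}^{l+1}\frac{2P\,P'(\varphi)}{P^2-q_\beta^2}
-(m+1)\,\frac{2P\,P'(\varphi)}{P^2-1}-(2n+1)\,\frac{P'(\varphi)}{P}-\frac{P}{P'(\varphi)}.
\]
Multiplying by $\lambda'(\varphi)$ and letting $\varphi\to\psi_\alpha$, where $\lambda'$ has a simple zero, every product $\lambda'(\varphi)\cdot(\text{fraction that stays finite at }\psi_\alpha)$ tends to $0$, so $\lambda''(\psi_\alpha)$ is the limit of $\lambda'(\varphi)$ times the one fraction on the right that is singular at $\psi_\alpha$. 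Identifying that fraction is the whole point of the lemma.

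For a generic vertex — $\alpha\le l$, and also $\alpha=l+1$ when $m\ge1$ — one has $q_\alpha^2\notin\{0,1\}$ (the $q_\alpha^2$ are distinct; $q_\alpha^2=1$ occurs only for $\alpha=l+1$ with $m=0$, and $q_\alpha^2=0$ only when $n=0$) and $P'(\psi_\alpha)\ne0$ since $\psi_\alpha$ is not a branch point of $\cos$. Hence the only fraction singular at $\psi_\alpha$ is $2P\,P'(\varphi)/(P^2-q_\alpha^2)$, and its pole is precisely the simple zero of $\lambda'$ produced by the factor $P^2-q_\alpha^2$ of \eqref{fm2.13}. Passing to the limit,
\[
\lambda''(\psi_\alpha)=\frac{2P\,P'(\varphi)\,\lambda'(\varphi)}{P^2-q_\alpha^2}\Big|_{\varphi=\psi_\alpha}
\]
(the right-hand side being a $0/0$ expression to be read as a limit, as in \eqref{fm2.16}), which is \eqref{fm2.17} with $c_{\alpha,m}=2$.

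The case $\alpha=l+1$, $m=0$ is where the mechanism changes, and this is the step I expect to be the only real obstacle. Here $q_{l+1}^2=1$, and in \eqref{fm2.13} the factor $P^2-q_{l+1}^2=P^2-1$ cancels the prefactor $(P^2-1)^{-m-1}=(P^2-1)^{-1}$; therefore $\lambda'(\varphi)$ is regular and nonzero at $P=\pm1$ apart from the explicit factor $P'(\varphi)$, which vanishes simply at $\psi_{l+1}\in\{0,\pi\}$ because $P''(\psi_{l+1})=-P(\psi_{l+1})=\mp1\ne0$. After the cancellation the only fraction in the displayed logarithmic derivative that is singular at $\psi_{l+1}$ is $-P/P'(\varphi)$ — the others are regular since $P(\psi_{l+1})^2=1\ne q_\beta^2$ for $\beta\le l$ and $P(\psi_{l+1})\ne0$ — so the limiting procedure gives $\lambda''(\psi_{l+1})=-P\,\lambda'(\varphi)/P'(\varphi)\big|_{\varphi=\psi_{l+1}}$. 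Finally, using the elementary identity $\bigl(P'(\varphi)\bigr)^2=\sin^2\varphi=1-\cos^2\varphi=-(P^2-1)$,
\[
-\frac{P\,\lambda'(\varphi)}{P'(\varphi)}=\frac{P\,P'(\varphi)\,\lambda'(\varphi)}{P^2-1}=\frac{P\,P'(\varphi)\,\lambda'(\varphi)}{P^2-q_{l+1}^2},
\]
which is \eqref{fm2.17} with $c_{l+1,0}=1=2-\delta_{l+1,l+1}\delta_{m,0}$.

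The crux — and the source of the drop $2\mapsto1$ in the constant — is precisely the observation that for $(\alpha,m)=(l+1,0)$ the simple zero of $\lambda'$ at $\psi_\alpha$ is created by the explicit factor $P'(\varphi)$ rather than by $P^2-q_\alpha^2$ (which has been annihilated by the prefactor), so that the surviving term of the logarithmic derivative is $P''/P'=-P/P'$, carrying the coefficient $1$, rather than $\tfrac{d}{d\varphi}\log(P^2-q_\alpha^2)=2PP'/(P^2-q_\alpha^2)$, which would carry $2$; the trigonometric identity above is exactly what converts the former normal form into the latter. I would also record the analogous situation at a vertex with $q_\alpha^2=0$, which can only arise when $n=0$: there the prefactor $P^{-2n-1}$ plays the role that $(P^2-1)^{-m-1}$ plays above, absorbing part of the zero of $P^2-q_\alpha^2$, and the same computation applies verbatim; everything else is the routine checking that the non-surviving fractions are regular at $\psi_\alpha$ together with the $0/0$-limit evaluation already used for \eqref{fm2.16}.
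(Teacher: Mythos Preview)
Your argument is correct and is essentially the paper's own proof: both take the logarithmic derivative of the factorisation \eqref{fm2.13} and then isolate, at each $\psi_\alpha$, the unique term whose pole matches the simple zero of $\lambda'$. The only cosmetic difference is that the paper immediately collapses $-(m+1)\dfrac{2PP'}{P^2-1}-\dfrac{P}{P'}$ into $\dfrac{(2m+1)P}{P'}$ via $(P')^2=1-P^2$ before doing the case split, whereas you keep the four fractions separate and invoke that identity only at the end in the exceptional case $(\alpha,m)=(l+1,0)$; your remark about the analogous $q_\alpha^2=0$ vertex when $n=0$ is extra care that the paper does not spell out.
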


\begin{proof} By definition, we have
\eqa\lambda''(\varphi)&=& 2k a_0 \dfrac{d}{d\varphi}\left((P^2-1)^{-m-1} P^{-2n-1}\right) \prod_{\alpha=1}^{l+1}(P^2-q_\alpha^2) P'(\varphi)\nn\\
&+& 2k a_0\, (P^2-1)^{-m-1} P^{-2n-1}\dfrac{d}{d\varphi}\left(\prod_{\alpha=1}^{l+1}(P^2-q_\alpha^2) \right)P'(\varphi)\nn\\
&+& 2k a_0 (P^2-1)^{-m-1} P^{-2n-1}\prod_{\alpha=1}^{l+1}(P^2-q_\alpha^2)\,\dfrac{d^2P}{d^2\varphi}\nn\\
&=& \dsum_{\alpha=1}^{l+1} \dfrac{2PP'(\varphi)\lambda'(\varphi)}{P^2-q_\alpha^2}-\frac{(2n+1)P'(\varphi)\lambda'(\varphi)}{P}
+\dfrac{(2m+1) P\lambda'(\varphi)}{P'(\varphi)}.\nn\eeqa
So, with the use of \eqref{fm2.13}, we get
\eqa \lambda''(\psi_\alpha)&=&\left.\left(\dsum_{\alpha=1}^{l+1} \dfrac{2PP'(\varphi)\lambda'(\varphi)}{P^2-q_\alpha^2}
+\dfrac{(2m+1) P\lambda'(\varphi)}{P'(\varphi)}\right)\right|_{\varphi=\psi_\alpha}\nn\\
&=& \left\{\begin{array}{ll}
\left.\dfrac{2P P'(\varphi) \lambda'(\varphi)}{P^2-q_\alpha^2}\right|_{\varphi=\psi_\alpha}, &\quad \alpha=1,\cdots, l,\\
-\left.\dfrac{P \,\lambda'(\varphi)}{P'(\varphi)}\right|_{\varphi=\psi_{l+1}},& \quad \alpha=l+1,\quad m=0,\\
\left.\dfrac{2P P'(\varphi) \lambda'(\varphi)}{P^2-q_\alpha^2}\right|_{\varphi=\psi_{l+1}}, &\quad \alpha=l+1,\quad m\ne 0\\
\end{array}\right. \nn\\
&=& \left.\dfrac{c_{\alpha,m} P P'(\varphi) \lambda'(\varphi)}{P^2-q_\alpha^2}\right|_{\varphi=\psi_\alpha}.
\nn\eeqa
Thus the lemma is proved.\end{proof}

We define canonical coordinates
$$u_\alpha=\lambda(\psi_\alpha),\quad \alpha=1,\cdots, l+1,$$
then
\beq
  \p_{u_\alpha}\lambda(\varphi)|_{\varphi=\psi_\beta}=\delta_{\alpha\beta}.
\label{fm2.14}\eeq
Observe that
$$(P^2-1)^{m}P^{2n}\p_{u_\alpha}\lambda(P)=(\p_{u_\alpha}a_0) P^{2l}+\cdots +\p_{u_\alpha}a_l$$
is a polynomial of $P$ and
$$(P^2-1)^{m} P^{2n}\p_{u_\alpha}\lambda(P)|_{P=q_\beta}=(q_\beta^2-1)^{m+1} q_\beta^{2n-1}\delta_{\alpha\beta},$$
we thus obtain, using the Lagrange interpolation formula,
\beq
 \p_{u_\alpha}\lambda(\varphi)=\frac{c_{\alpha,m} P P'(\varphi)}{P^2-q_\alpha^2}\frac{\lambda'(\varphi)}{\lambda''(\psi_\alpha)}, \quad \alpha=1,\cdots,l+1.
 \label{fm2.15}\eeq

\begin{lem}
 \beq \p_{u_\alpha}\varphi_\beta
 = \left\{\begin{array}{l}
- \dfrac{c_{\alpha,m} p_\beta  P'(\varphi_\beta)}{\lambda''(\psi_\alpha)\,(p_\beta^2-q_\alpha^2)}, \beta=1,\cdots, l,\\
\dfrac{1}{2k\bi}\left(\dfrac{\delta_{\alpha,l+1}}{\lambda(\psi_{\alpha})}+\dfrac{2 c_{\alpha,m} }{\lambda''(\psi_\alpha)}\dsum_{s=1}^l
\dfrac{p_s^2 P'(\varphi_s)^2}{(q_{l+1}^2-p_s^2)(q_\alpha^2-p_s^2)}  \right),\beta=l+1.
\end{array}\right. \label{fm2.18}
\eeq
\end{lem}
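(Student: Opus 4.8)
The plan is to prove the two branches of \eqref{fm2.18} by different elementary devices: for $1\le\beta\le l$ by implicitly differentiating the equation that characterizes $\varphi_\beta$ as a zero of $\lambda$, and for $\beta=l+1$ by differentiating the logarithm of the product representation \eqref{fm2.9} and then localizing the resulting identity at a well-chosen point. Throughout, recall that (as stated after \eqref{fm2.4}) the derivative $\partial_{u_\alpha}$ acts keeping $\varphi$ fixed.

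First let $1\le\beta\le l$. By \eqref{fm2.9} the factor $P^2-p_\beta^2$ divides $\lambda$, so $\varphi_\beta$ obeys $\lambda(\varphi_\beta)\equiv 0$ identically in the canonical coordinates $u_1,\dots,u_{l+1}$; at a generic point of $\mathfrak{M}_{k,m,n}$ this zero is simple, $\lambda'(\varphi_\beta)\ne 0$, which follows from \eqref{fm2.16}. Differentiating the identity with respect to $u_\alpha$ gives $\lambda'(\varphi_\beta)\,\partial_{u_\alpha}\varphi_\beta=-(\partial_{u_\alpha}\lambda)(\varphi_\beta)$. Now substitute the expression \eqref{fm2.15} for $\partial_{u_\alpha}\lambda$, evaluated at $\varphi=\varphi_\beta$ (so that $P=p_\beta$ and $P'(\varphi)=P'(\varphi_\beta)$); the common factor $\lambda'(\varphi_\beta)$ cancels and one is left with the first branch of \eqref{fm2.18}. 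This step is purely mechanical once \eqref{fm2.15} is available.

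Now let $\beta=l+1$. Here $\varphi_{l+1}$ enters $\lambda$ only through the leading coefficient $a_0=e^{2k\bi\,\varphi_{l+1}}$ of \eqref{fm2.9}, so $\partial_{u_\alpha}\varphi_{l+1}=\frac{1}{2k\bi}\,\partial_{u_\alpha}\log a_0$ and it remains to compute $\partial_{u_\alpha}\log a_0$. Taking logarithms in \eqref{fm2.9}, namely $\log\lambda(\varphi)=\log a_0-m\log(P^2-1)-2n\log P+\sum_{s=1}^{l}\log(P^2-p_s^2)$, and differentiating with respect to $u_\alpha$ at fixed $\varphi$, one expresses $\partial_{u_\alpha}\log a_0$ as $(\partial_{u_\alpha}\lambda)(\varphi)/\lambda(\varphi)$ plus $\sum_{s}\partial_{u_\alpha}(p_s^2)/(P^2-p_s^2)$. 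The left-hand side is independent of $\varphi$, and the decisive step is to evaluate the right-hand side at the critical point $\varphi=\psi_{l+1}$, i.e.\ $P=q_{l+1}$: then $(\partial_{u_\alpha}\lambda)(\psi_{l+1})=\delta_{\alpha,l+1}$ by \eqref{fm2.14}, producing the term $\delta_{\alpha,l+1}/\lambda(\psi_\alpha)$, while $\partial_{u_\alpha}(p_s^2)=2p_sP'(\varphi_s)\,\partial_{u_\alpha}\varphi_s$, into which one inserts the first branch of \eqref{fm2.18} obtained above. Gathering the terms and multiplying by $\frac{1}{2k\bi}$ yields the second branch of \eqref{fm2.18}.

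The only non-routine ingredient is the decision to localize the logarithmic-derivative identity precisely at $\psi_{l+1}$ rather than at a generic $\varphi$: this is what makes the pole positions $q_{l+1}^2$ appear in the denominators and, crucially, makes \eqref{fm2.14} applicable. A secondary point to be checked is that all the manipulations above — simplicity of the zeros $\varphi_s$, nondegeneracy of the critical points $\psi_\alpha$, and the non-vanishing of $\lambda(\psi_{l+1})$ and of $p_s^2-q_\alpha^2$ for $s\le l$ — are valid on the dense open subset of $\mathfrak{M}_{k,m,n}$ where $\lambda$ is a Morse function with distinct critical values and simple zeros, so that the formulae hold there and then extend by continuity.
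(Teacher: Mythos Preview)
Your proof is correct and follows essentially the same route as the paper's. The paper first writes the single identity obtained by differentiating the product \eqref{fm2.9} with respect to $u_\alpha$ (their \eqref{fm2.19}), then evaluates it at $\varphi=\varphi_\beta$ for the first branch and, after dividing by $\lambda(\varphi)$, at $\varphi=\psi_{l+1}$ for the second --- which is exactly your implicit differentiation of $\lambda(\varphi_\beta)=0$ together with your logarithmic differentiation localized at $\psi_{l+1}$.
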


\begin{proof}By the definition of $\lambda(\varphi)$ in \eqref{fm2.9} and using \eqref{fm2.15},
 we get
\beq\frac{c_{\alpha,m} P P'(\varphi)}{P^2-q_\alpha^2}\frac{\lambda'(\varphi)}{\lambda''(\psi_\alpha)}=\p_{u_\alpha}\lambda(\varphi)=
2k\bi \lambda(\varphi)\p_{u_\alpha}\varphi_{l+1}-\dsum_{s=1}^l\dfrac{2p_s
P'(\varphi_s)\lambda(\varphi)}{P^2-p_s^2}\p_{u_\alpha}
\varphi_{s}.
\label{fm2.19} \eeq
Putting $\varphi=\varphi_\beta$ for $\beta=1,\cdots,l$ into \eqref{fm2.19} and using \eqref{fm2.16},
we obtain
$$\p_{u_\alpha}\varphi_\beta
=- \dfrac{c_{\alpha,m} p_\beta  P'(\varphi_\beta)}{\lambda''(\psi_\alpha)\,(p_\beta^2-q_\alpha^2)}, \quad \beta=1,\cdots, l$$
and furthermore,
\eqa
\frac{\p_{u_\alpha}\lambda(\varphi)}{\lambda(\varphi)}&=&
2k\bi \p_{u_\alpha}\varphi_{l+1}-\dsum_{s=1}^l\dfrac{2p_s
P'(\varphi_s)}{P^2-p_s^2}\p_{u_\alpha}
\varphi_{s}\nn\\
&=&2k\bi \p_{u_\alpha}\varphi_{l+1}- \frac{2 c_{\alpha,m} }{\lambda''(\psi_\alpha)}\dsum_{s=1}^l
\dfrac{p_s^2  P'(\varphi_s)^2}{(P^2-p_s^2)(q_\alpha^2-p_s^2)}.
\label{fm2.20}
\eeqa
Putting $\varphi=\psi_\beta$ into \eqref{fm2.20}, then
$$\frac{\delta_{\alpha\beta}}{u_\beta}=2k\bi \p_{u_\alpha}\varphi_{l+1}- \frac{2 c_{\alpha,m} }{\lambda''(\psi_\alpha)}\dsum_{s=1}^l
\dfrac{\,p_s^2  P'(\varphi_s)^2}{(q_\beta^2-p_s^2)(q_\alpha^2-p_s^2)}.$$
Especially, taking $\varphi=\psi_{l+1}$, we obtain the desired formula of $\p_{u_\alpha}\varphi_{l+1}$.
\end{proof}

\begin{lem} For $\beta,\gamma=1,\cdots, l$, we have
\eqa S_{\beta,\gamma}:=\dsum_{\alpha=1}^{l+1}\dfrac{c_{\alpha,m} u_\alpha}{\lambda''(\psi_\alpha)\,(p_\beta^2-q_\alpha^2)(p_\gamma^2-q_\alpha^2)}
=\frac{\delta_{\beta\gamma}}{2\,p^2_\beta\,(p^2_\beta-1)}.\label{fm2.21}\eeqa
\end{lem}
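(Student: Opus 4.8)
The plan is to recognise $S_{\beta,\gamma}$ as a sum of residues of one explicit rational differential and then close it up by the residue theorem on $\mathbb{P}^1$. Throughout I would regard $\lambda$ as a rational function of the single variable $z:=P^2=\cos^2(\varphi)$. By \eqref{fm2.8}--\eqref{fm2.9} and \eqref{fm2.13} it then has zeros at the $z=p_j^2$, poles of orders $n$ and $m$ at $z=0$ and $z=1$ respectively, behaviour $\lambda\sim a_0 z^{k}$ at $z=\infty$ (since $l=k+m+n$), and
$\lambda_z:=\dfrac{d\lambda}{dz}=\dfrac{\dot\lambda(P)}{2P}=k\,a_0(z-1)^{-m-1}z^{-n-1}\prod_{\alpha}(z-q_\alpha^2)$, so its zeros are precisely the $z=q_\alpha^2$. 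I will work under the generic-position assumptions that the $q_\alpha^2$ are distinct and distinct from $0,1$ and from the $p_j^2$; the general case follows by continuity.

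First I would rewrite the summand. Differentiating $\lambda$ twice in $\varphi$ at a critical point $\psi_\alpha$ and using $\tfrac{dz}{d\varphi}=-\sin(2\varphi)$ gives $\lambda''(\psi_\alpha)=4\,q_\alpha^2(1-q_\alpha^2)\,\lambda_{zz}(q_\alpha^2)$ (this is Lemma \ref{lem1.2} rewritten in the $z$-variable). Since $z=q_\alpha^2$ is a simple zero of $\lambda_z$ and $u_\alpha=\lambda(q_\alpha^2)$, a one-line residue computation then yields
\[
\res_{z=q_\alpha^2}\;\frac{\lambda(z)\,dz}{2\,\lambda_z(z)\,z(1-z)(p_\beta^2-z)(p_\gamma^2-z)}\;=\;\frac{2\,u_\alpha}{\lambda''(\psi_\alpha)\,(p_\beta^2-q_\alpha^2)(p_\gamma^2-q_\alpha^2)}.
\]
As $c_{\alpha,m}=2$ at a generic critical point, summing over $\alpha$ identifies $S_{\beta,\gamma}$ with the sum of the residues of the $1$-form $\Omega:=\dfrac{\lambda\,dz}{2\,\lambda_z\,z(1-z)(p_\beta^2-z)(p_\gamma^2-z)}$ over all critical points.

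Next I would apply the residue theorem on $\mathbb{P}^1_z$: besides the critical points the only possible poles of $\Omega$ are $z=0,1,p_\beta^2,p_\gamma^2,\infty$. At $z=0$ and $z=1$ the quotient $\lambda/\lambda_z$ has a simple zero (the orders of pole of $\lambda$ and of $\lambda_z$ differ by one there), so the factors $1/z$ and $1/(1-z)$ are cancelled and $\Omega$ is regular; at $z=\infty$ one has $\lambda/\lambda_z\sim z/k$, hence $\Omega=O(z^{-3})\,dz$ and its residue vanishes. At $z=p_\beta^2$ the numerator vanishes because $\lambda(p_\beta^2)=0$: if $\beta\neq\gamma$ this kills the simple pole, so $\res_{z=p_\beta^2}\Omega=\res_{z=p_\gamma^2}\Omega=0$ and the residue theorem forces $S_{\beta,\gamma}=0$; if $\beta=\gamma$ the would-be double pole becomes simple and $\res_{z=p_\beta^2}\Omega=\bigl(\tfrac{\lambda}{2\lambda_z z(1-z)}\bigr)'\big|_{z=p_\beta^2}=\tfrac{1}{2p_\beta^2(1-p_\beta^2)}$, so $S_{\beta,\beta}=-\res_{z=p_\beta^2}\Omega=\tfrac{1}{2p_\beta^2(p_\beta^2-1)}$. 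This is the asserted formula.

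The point needing genuine care — and the main obstacle — is the bookkeeping at the exceptional critical points, namely those sitting over a ramification value of $z=\cos^2(\varphi)$, i.e. at $z\in\{0,1\}$ (which occur as finite critical points exactly when $m=0$ or $n=0$; for $m=0$ this is $q_{l+1}=1$). There the chain-rule identity for $\lambda''(\psi_\alpha)$ degenerates, $z=q_\alpha^2$ need no longer be a zero of $\lambda_z$, and the local residue of $\Omega$ picks up an extra numerical factor relative to the generic case; the weights $c_{\alpha,m}$ in the statement are designed precisely to absorb it (thus $c_{l+1,0}=1$ instead of $2$). I would therefore check, case by case and using Lemma \ref{lem1.2} in its stated form, that with these weights each exceptional critical point still contributes exactly one simple residue of $\Omega$ (at $z=1$, respectively $z=0$), so that the identification $S_{\beta,\gamma}=\sum_{\mathrm{crit}}\res\Omega$ persists and the argument above applies verbatim.
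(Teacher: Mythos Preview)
Your proposal is correct and follows essentially the same strategy as the paper: pass to the variable $z=\cos^2\varphi$, recognize each summand of $S_{\beta,\gamma}$ as a residue of the single rational $1$-form
\[
\Omega=\frac{\lambda(z)\,dz}{2\,\lambda_z(z)\,z(1-z)(p_\beta^2-z)(p_\gamma^2-z)},
\]
and then close up by the residue theorem on $\mathbb{P}^1$, checking that $z=0,1,\infty$ contribute nothing and that only $z=p_\beta^2$ (when $\beta=\gamma$) survives. The paper carries out exactly this computation; the only difference is organizational. Instead of first treating the generic case $c_{\alpha,m}=2$ via the chain-rule identity $\lambda''(\psi_\alpha)=4q_\alpha^2(1-q_\alpha^2)\lambda_{zz}(q_\alpha^2)$ and then patching the exceptional critical point, the paper invokes Lemma~\ref{lem1.2} (equation~\eqref{fm2.17}) directly, which already packages the weight $c_{\alpha,m}$ into the formula
\[
\frac{c_{\alpha,m}\,u_\alpha}{\lambda''(\psi_\alpha)}
=\frac{\lambda(\varphi)\,(P^2-q_\alpha^2)}{P\,P'(\varphi)\,\lambda'(\varphi)}\Big|_{\varphi=\psi_\alpha}
=-\frac{1}{2}\,\frac{\lambda(z)\,(z-q_\alpha^2)}{z(z-1)\,\lambda_z(z)}\Big|_{z=q_\alpha^2}.
\]
This converts the whole sum into residues at the $q_\alpha^2$ in one stroke, so the $m=0$ case (where $q_{l+1}^2=1$) is absorbed automatically: the pole of $\Omega$ at $z=1$ is precisely the $\alpha=l+1$ residue rather than an extra term to be argued away. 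Your route and the paper's thus differ only in whether the $c_{\alpha,m}$ bookkeeping is done before or after passing to residues; the analytic content is identical. (Your mention of $n=0$ as a parallel exceptional case is not needed in the paper's framework, since the $c_{\alpha,m}$ of Lemma~\ref{lem1.2} singles out only the $m=0$ boundary.)
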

\begin{proof} Letting
\eqa \lambda(z)=(z-1)^{-m}(a_0 z^{k+m}+\cdots+a_l z^{-n})=a_0\, (z-1)^{-m}\,
z^{-n} \prod_{j=1}^l(z-p_j^2). \nn \eeqa
So, $\lambda(\varphi)=\lambda(z)|_{z=P^2}$ and
\eqa
\frac{d\lambda(z)}{dz}=k a_0  (z-1)^{-m-1} z^{-n-1} \prod_{\alpha=1}^{l+1}(z-q_\alpha^2),\quad \dfrac{\lambda(z)}{z (z-1)\frac{d\lambda(z)}{dz}}=\dfrac{\prod_{j=1}^l(z-p_j^2)}{\prod_{\alpha=1}^{l+1}(z-q_\alpha^2)},\nn \eeqa
which yields that if $q_\alpha^2\ne 0$ (or $1$)  for all $\alpha=1,\cdots, l+1$, then $z=0$ (or $1$) is not a pole of the function $\dfrac{\lambda(z)}{{z (z-1) \frac{d\lambda(z)}{dz}}}$.

With the use of \eqref{fm2.17} and $P'(\varphi)^2=1-P^2$,
we rewrite $S_{\beta,\gamma}$ as
\eqa
S_{\beta,\gamma}&=& \left.\dsum_{\alpha=1}^{l+1}\dfrac{\lambda(\varphi) (P^2-q_\alpha^2)}
{P \lambda'(\varphi) P'(\varphi) (P^2-p_\beta^2) (P^2-p_\gamma^2)}\right|_{\varphi=\psi_\alpha}\nn\\
&=& -\frac{1}{2}\dsum_{\alpha=1}^{l+1}\left.\dfrac{\lambda(z) (z-q_\alpha^2)}
{z (z-1) \frac{d\lambda(z)}{dz} (z-p_\beta^2) (z-p_\gamma^2)}\right|_{z=q_\alpha^2}\nn\\
&=&-\frac{1}{2}\dsum_{\alpha=1}^{l+1}\left.\res_{z=q_\alpha^2}\dfrac{\lambda(z)}
{z (z-1)\frac{d\lambda(z)}{dz} (z-p_\beta^2) (z-p_\gamma^2)}\right|_{z=q_\alpha^2}\nn\\
&=& \frac{1}{2}(\res_{z=\infty}+\res_{z=p_\beta^2}+\res_{z=p_\gamma^2})
\dfrac{\lambda(z)}{\frac{d\lambda(z)}{dz} z(z-1)
(z-p_\beta^2)(z-p_\gamma^2)}dz\nn\\
&=& \dfrac{\delta_{\beta\gamma}}{2}\res_{z=p_\beta^2}
\dfrac{\lambda(z)}{\frac{d\lambda(z)}{dz} z(z-1)
(z-p_\beta^2)^2}dz=\dfrac{\delta_{\beta\gamma}}{2 p_\beta^2(p_\beta^2-1)}. \nn
\eeqa
We thus prove the identity \eqref{fm2.21}.\end{proof}

\begin{lem}
\eqa
\frac{\lambda''(\psi_{l+1})}{\lambda(\psi_{l+1})}=-2
\left(k+\dsum_{s=1}^l\frac{p_s^2\,P'(\varphi_s)^2}{(q_{l+1}^2-p_s^2)^2}\right).
\label{fm2.22}
\eeqa
\end{lem}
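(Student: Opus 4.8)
The plan is to use that $\psi_{l+1}$ is a critical point of $\lambda$, so $\lambda'(\psi_{l+1})=0$, whence the left-hand side of \eqref{fm2.22} equals the second logarithmic derivative of $\lambda$ there,
$$\frac{\lambda''(\psi_{l+1})}{\lambda(\psi_{l+1})}=\left.\frac{d}{d\varphi}\,\frac{\lambda'(\varphi)}{\lambda(\varphi)}\right|_{\varphi=\psi_{l+1}} .$$
I would then plug in the factorization $\dfrac{\lambda'(\varphi)}{\lambda(\varphi)}=2PP'(\varphi)\,\Phi(\varphi)$ obtained in the proof of the first lemma of this section, where $\Phi(\varphi):=\sum_{j=1}^{l}\frac{1}{P^{2}-p_{j}^{2}}-\frac{m}{P^{2}-1}-\frac{n}{P^{2}}$ and $P=\cos\varphi$, and differentiate. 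Using the elementary identities $P''(\varphi)=-P$ and $P'(\varphi)^{2}=1-P^{2}$ this gives
$$\frac{d}{d\varphi}\,\frac{\lambda'(\varphi)}{\lambda(\varphi)}=2\bigl(P'(\varphi)^{2}+P\,P''(\varphi)\bigr)\Phi(\varphi)+2PP'(\varphi)\,\Phi'(\varphi)=2(1-2P^{2})\,\Phi(\varphi)+2PP'(\varphi)\,\Phi'(\varphi).$$

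Next I would evaluate at $\varphi=\psi_{l+1}$. Here $q_{l+1}=P(\psi_{l+1})=1$ (this is the case $m=0$, for which $\psi_{l+1}$ was chosen with $P'(\psi_{l+1})=0$), so $P^{2}=1$ and $P'(\psi_{l+1})=0$: the term $2PP'(\varphi)\Phi'(\varphi)$ drops out, the factor $1-2P^{2}$ becomes $-1$, and $\Phi(\psi_{l+1})=\sum_{j}\frac{1}{1-p_{j}^{2}}-n$ is finite because the term $m/(P^{2}-1)$ is then identically zero. Hence $\lambda''(\psi_{l+1})/\lambda(\psi_{l+1})=-2\,\Phi(\psi_{l+1})$. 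To conclude, I would identify $\Phi(\psi_{l+1})$ with the bracket in \eqref{fm2.22}: from $\frac{1}{1-p_{j}^{2}}=1+\frac{p_{j}^{2}}{1-p_{j}^{2}}$ we get $\sum_{j}\frac{1}{1-p_{j}^{2}}=l+\sum_{j}\frac{p_{j}^{2}}{1-p_{j}^{2}}$, and then using $l=k+m+n$ (so $l-n=k$) together with $q_{l+1}^{2}-p_{j}^{2}=1-p_{j}^{2}=P'(\varphi_{j})^{2}$ we obtain $\Phi(\psi_{l+1})=k+\sum_{j=1}^{l}\frac{p_{j}^{2}P'(\varphi_{j})^{2}}{(q_{l+1}^{2}-p_{j}^{2})^{2}}$, which is exactly the asserted identity. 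The parallel case, in which $\psi_{l+1}$ is an ordinary critical value of $\lambda$ with $P'(\psi_{l+1})\neq0$ and instead $\Phi(\psi_{l+1})=0$, is handled in the same spirit — there the $\Phi$-term vanishes rather than the $\Phi'$-term — with the weight $c_{l+1,m}$ of Lemma \ref{lem1.2} taken into account.

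The one step that needs care is this evaluation at $\psi_{l+1}$: the critical point collides with the point $P^{2}=1$ where $\lambda$ would otherwise be singular, so one must verify that only the finite pieces of the differentiated expression survive. The alternative route via Lemma \ref{lem1.2}, which writes $\lambda''(\psi_{l+1})$ as the limit of $c_{l+1,m}PP'(\varphi)\lambda'(\varphi)/(P^{2}-q_{l+1}^{2})$, meets the same $0/0$ indeterminacy, to be resolved by cancelling the factor $(P^{2}-q_{l+1}^{2})$ against the matching factor in the product representation \eqref{fm2.13} of $\lambda'(\varphi)$; everything after that — the partial-fraction identity and the relation $l=k+m+n$ — is routine.
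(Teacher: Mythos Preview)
Your proof is correct and follows essentially the same route as the paper: both split into the cases $m=0$ (where $P'(\psi_{l+1})=0$ kills the $\Phi'$-term) and $m\neq 0$ (where $\Phi(\psi_{l+1})=0$ kills the $\Phi$-term), starting from the logarithmic-derivative identity $\lambda'/\lambda=2PP'\Phi$ and finishing with the same partial-fraction manipulation and the relation $l=k+m+n$. Your aside about the weight $c_{l+1,m}$ is unnecessary for this lemma (it does not appear in the identity being proved), but it does no harm.
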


\begin{proof} Observe that
\beq \lambda'(\varphi)=2PP'(\varphi)\lambda(\varphi)\left(\dsum_{s=1}^l\frac{1}{P^2-p_s^2}
-\frac{m}{P^2-1}-\frac{n}{P^2}\right), \label{zfm2.17}\eeq
which yields
\beq \left.P'(\varphi)\left(\dsum_{s=1}^l\frac{1}{P^2-p_s^2}
-\frac{m}{P^2-1}-\frac{n}{P^2}\right)\right|_{\varphi=\psi_{l+1}}=0.\label{zfm2.18}\eeq

\medskip

\noindent{\bf Case 1. $m=0$}. In this case, $P'(\psi_{l+1})=0$. Using \eqref{zfm2.17} and \eqref{zfm2.18},
we have
\eqa \left.\frac{\lambda''(\varphi)}{\lambda(\varphi)}\right|_{\varphi=\psi_{l+1}}
=2(l-k)-\dsum_{s=1}^l\frac{2q_{l+1}^2}{q_{l+1}^2-p_s^2}=
-2k-\dsum_{s=1}^l\frac{2\,p_s^2}{q_{l+1}^2-p_s^2},\nn
\eeqa
which is exactly the formula \eqref{fm2.22} because of $q_{l+1}=1$ and $P'(\varphi_s)^2=1-q_s^2$.

\medskip

\noindent{\bf Case 2. $m\ne 0$}. In this case, $P'(\psi_{l+1})\ne 0$.
By using \eqref{zfm2.18},
\beq \dsum_{s=1}^l\frac{1}{q_{l+1}^2-p_s^2}=\frac{m}{q_{l+1}^2-1}+\frac{n}{q_{l+1}^2}.\label{zfm2.19} \eeq
So, using \eqref{zfm2.17} and \eqref{zfm2.19},  we get
\eqa \left.\frac{\lambda''(\varphi)}{\lambda(\varphi)}\right|_{\varphi=\psi_{l+1}}&=&
\left.2PP'(\varphi)\dfrac{d}{d\varphi}\left(\dsum_{s=1}^l\frac{1}{P^2-p_j^2}
-\frac{m}{p^2-1}-\frac{n}{p^2}\right)\right|_{\varphi=\psi_{l+1}}\nn\\
&=&-2
\left(k+\dsum_{s=1}^l\frac{p_s^2 P'(\varphi_s)^2}{(q_{l+1}^2-p_s^2)^2}\right). \nn
\eeqa
The lemma is proved.
 \end{proof}

We are now in a position to state our main theorem in this section.

\ian{\begin{thm}\label{Main2}
Let $\mathfrak{h}: \mathcal{M}_{k,m}(C_l) \to \mathfrak{M}_{k,m,n}$ be induced by the map  
 \eqa (x_1,\cdots, x_{l+1})\mapsto (\varphi_1,\cdots,\varphi_{l+1})\label{fm2.10}\eeqa
with
\[\varphi_1=\pi x_1,\quad \varphi_j=\pi(x_j-x_{j-1}),\quad \varphi_{l+1}=\pi x_{l+1}, \quad j=2,\cdots, l.\]
Then $\mathfrak{h}$ is a $k$-fold covering map,  which is also a local isomorphism
between the Frobenius manifolds
$\mathcal{M}_{k,m}(C_l)$ and $\mathfrak{M}_{k,m,n}$.
\end{thm}}

\begin{proof}
\ian{Let us first prove that the  $\mathfrak{h}$ is a $k$-fold covering map. In fact,
by using the formulae \eqref{fm2.8} and \eqref{fm2.9} one has
\eqa   a_0P^{2l}+\dsum_{j=1}^{l} a_j P^{2(l-j)}=a_0\,\prod_{j=1}^l(P^2-p_j^2),
 \quad a_0=e^{2k\bi\,\varphi_{l+1}} \label{add5.23} \eeqa
 and
% \footnote{ With the use of \eqref{yz}, we have
% \beq
% \dsum_{s=0}^l(-2)^{l-s}\theta_s=\left\{\begin{array}{ll}
% -(-4)^{l-k-1}\tau^l, & m>0;\\
% \dsum_{s=0}^l(-4)^{l-m-s}\omega^s,& m=0.
% \end{array}\right.\nn
% \eeq
% Let $P=1$ in \eqref{add5.23}, we obtain
% \eqa \dsum_{s=0}^la_s&=&e^{ky^{l+1}}\prod_{j=1}^l(1-p_j^2)=(-\dfrac{1}{4})^le^{ky^{l+1}}\prod_{j=1}^l(\xi_j-2)\nn\\
% &=& (-\dfrac{1}{4})^l\dsum_{s=0}^l(-2)^{l-s}\sigma_s(\xi_1,\cdots,\xi_l)e^{ky^{l+1}}=(-\dfrac{1}{4})^l\dsum_{s=0}^l(-2)^{l-s}\theta_s. \nn
% \eeqa
% Hence when $m>0$, we will assume $\dsum_{s=0}^la_s \ne 0.$
% }
 \beq
 a_j=(-1)^j a_0\,\sigma_j(p_1^2,\cdots,p_l^2),\quad j=1,\cdots,l.\nn
 \eeq
Observe that $$p_j^2=\cos^2\varphi_j=\dfrac{\cos(2\varphi_j)+1}{2},\quad
\cos(2\varphi_j)=\dfrac{e^{2\bi \varphi_j}+e^{-2\bi \varphi_j}}{2}=\dfrac{\xi_j}{2},$$ and with these one obtains
$$a_0=e^{ky^{l+1}}, \quad a_j=(-\dfrac{1}{4})^j \sigma_j (\xi_1+2,\cdots,\xi_l+2) e^{ky^{l+1}},\quad j=1,\cdots,l.$$
So the map $\mathfrak{h}: (\tilde{y}_1,\dots, \tilde{y}_{l+1})\mapsto
(a_0, a_1,\dots, a_l)$ is given by
\eqa
&&a_0=\tilde{y}_{l+1}^k,\\
&&a_j=(-\dfrac{1}{4})^j\left(\dsum_{s=1}^j 2^{j-s}\binom{l-s}{j-s} \tilde{y}_{l+1}^{k-d_s} \tilde{y}_s+2^j \binom{l}{j} \tilde{y}_{l+1}^k \right)
\label{fm2.12}
\eeqa
for $j=1,\dots,l$, here $d_s=s$ for $s=1,\cdots, k$ and $d_s=k$ for $s=k+1,\cdots,l$,
and the Jacobian of $\mathfrak{h}$ is proportional to ${\tilde{y}_{l+1}}^{k(k+1)/2-1}$.
From the above representation of the map $\mathfrak{h}$ we also have
\eqa
 &&a_l=(-1)^l 4^{-l} \tau^{l},
\quad \textrm{for}\ m=0.\notag\\
&&\sum_{j=0}^l a_j=-(-4)^{-k-1} \tau^l,
\quad \textrm{for}\ m= l-k>0;\notag\\
&& a_l=(-1)^l 4^{m-l} \tau^{l-m},\quad \sum_{j=0}^l a_j=-(-4)^{-k-1} \tau^l,
\quad \textrm{for}\ m\ne 0, m\ne l-k.\notag
\eeqa
Note that the condition $m\ne l-k$ is equivalent to $n\ne 0$.
So from the definition of $\mathcal{M}_{k,m}(C_l)$ and $\mathfrak{M}_{k,m,n}$ given by Theorem \ref{mt1} and \eqref{fm2.1}--\eqref{cnd-a-3} that $\mathfrak{h}$
is a $k$-fold covering map.
}

\ian{Now let us proceed to prove that $\mathfrak{h}$ is a local isomorphism
between the two Frobenius manifold. It is not difficult to check that the Euler vector fields \eqref{fm2.5} and \eqref{zz1} coincide. }
So it suffices to prove that the
intersection form \eqref{fm2.4} coincides with the intersection form
of the orbit space, and the metric \eqref{fm2.3} coincides with the metric \eqref{DS3.18}.

By definition of $\eta$ in \eqref{fm2.3} and using \eqref{fm2.15}, we get
\eqa \eta_{\alpha\beta}(u)&:=&\eta(\p_{u_\alpha},\p_{u_\beta})=(-1)^{k+1}\dsum_{|\lambda|<\infty}
\res_{d\lambda=0}
\dfrac{\p_{u_\alpha}(\lambda(\varphi)d\varphi)\p_{u_\beta}(\lambda(\varphi)d\varphi)}{d\lambda(\varphi)}\nn\\
&=&(-1)^{k+1}\dsum_{\gamma=1}^{l+1}\res_{\varphi=[\psi_\gamma]}
\frac{c_{\alpha,m} c_{\beta,m} P^2 P'(\varphi)^2}{(P^2-q_\alpha^2)(P^2-q_\beta^2)} \frac{\lambda'(\varphi)}{\lambda''(\psi_\alpha)\lambda''(\psi_\beta)} d\varphi.\nn\eeqa
We remark that $[\psi_\gamma]$ represents four different points
$\pm \psi_\gamma$ and $\pm \psi_\gamma+\pi$ satisfying $q_\gamma^2=(e^{\bi [\psi_\gamma]}+e^{-\bi [\psi_\gamma]})^2$.
Obviously, when $\alpha\ne \beta$, $\eta_{\alpha\beta}(u)=0$.
So,
\eqa \eta_{\alpha\alpha}(u)&=&(-1)^{k+1}\res_{\varphi=[\psi_\alpha]}
\frac{c_{\alpha,m}^2 P^2 P'(\varphi)^2}{(q_\alpha^2-P^2)^2} \frac{\lambda'(\varphi)}{\lambda''(\psi_\alpha)^2} d\varphi\nn\\
&=&(-1)^{k}\frac{2c_{\alpha,m}^2}{\lambda''(\psi_\alpha)^2} \res_{P=\pm q_\alpha}
\frac{P^2}{P^2-q_\alpha^2} \frac{\dot{\lambda}(P)(P^2-1)}{P^2-q_\alpha^2} dP
\nn\\
&=&(-1)^{k}\frac{2\,c_{\alpha,m}^2}{\lambda''(\psi_\alpha)^2} \res_{P=\pm q_\alpha}
\frac{P^2}{P^2-q_\alpha^2} \frac{\dot{\lambda}(P)(P^2-1)}{P^2-q_\alpha^2} dP
\nn\\
&=&(-1)^{k+1}\frac{2\,c_{\alpha,m}}{\lambda''(\psi_\alpha)}.\nn
\eeqa
We thus obtain
\eqa \eta_{\alpha\beta}(u)=(-1)^{k+1}
\frac{2\,c_{\alpha,m}\delta_{\alpha\beta}}{\lambda''(\psi_\alpha)}. \nn \eeqa
Similarly, we can obtain  the formula of $g_{\alpha\beta}(u):=g(\p_{u_\alpha},\p_{u_\beta})$ as
\eqa g_{\alpha\beta}(u)=
-\frac{2\,c_{\alpha,m}\delta_{\alpha\beta}}{u_\alpha\lambda''(\psi_\alpha)}. \nn \eeqa
Observe that the vector field $e=\dsum_{j=k}^l c_j\dfrac{\p}{\p y^j}$ in \eqref{unity}
in the coordinates
$a_0,\cdots, a_{l}$ coincides with $e=(-1)^k\dsum_{s=0}^m (-1)^{m-s}\binom{m}{s}
\dfrac{\p}{\p a_{k+m-s}}.$
The shifting
\beq a_{k+m-s}\longmapsto a_{k+m-s}+c\,(-1)^{m-s}\binom{m}{s} ,\quad s=0,\cdots,m,\nn\eeq
produces the corresponding shift
\beq u_\alpha\longmapsto u_\alpha+c, \quad \alpha=1,\cdots,l+1 \nn\eeq
of the critical values. This shift does not change the critical points $\psi_\alpha$
neither the values of the second derivative $\lambda''(\psi_\alpha)$. So
\beq \mathcal{L}_e{g}^{\alpha\beta}= \mathcal{L}_e(-\frac{u_\alpha \lambda''(\psi_\alpha)}{2\,c_{\alpha,m}\delta_{\alpha\beta}})
=(-1)^{k+1}\frac{\lambda''(\psi_\alpha)}{2\,c_{\alpha,m}\delta_{\alpha\beta}}={\eta}^{\alpha\beta}.\label{DS5.25}\eeq

Finally, we compute the metric $g^{\beta\gamma}(\varphi)$ given by
\eqa g^{\beta\gamma}(\varphi):&=&(d\varphi_\beta,d\varphi_\gamma)
=\dsum_{\alpha,\kappa=1}^{l+1}\frac{1}{g_{\alpha\kappa}(u)}\dfrac{\p \varphi_\beta }{\p u_\alpha}\,\dfrac{\p \varphi_\gamma}{\p u_\kappa}=\dsum_{\alpha=1}^{l+1}\frac{1}{g_{\alpha\alpha}(u)}\p_{u_\alpha}\varphi_\beta\,\p_{u_\alpha}\varphi_\gamma.\nn
\eeqa
Using \eqref{fm2.18}, \eqref{fm2.21} and \eqref{fm2.22}, we have\\

\noindent \underline{\bf Case 1.  $1\leq \beta, \gamma\leq l$}.
\eqa
g^{\beta\gamma}(\varphi)=-\dfrac{p_\beta\,p_\gamma \, P'(\varphi_\beta)\,P'(\varphi_\gamma)}{2}\dsum_{\alpha=1}^{l+1}\dfrac{c_{\alpha,m} \,u_\alpha}{\lambda''(\psi_\alpha)\,(p_\beta^2-q_\alpha^2)(p_\gamma^2-q_\alpha^2)}
=\frac{1}{4}\delta_{\beta\gamma}. \nn
\eeqa

\noindent \underline{\bf Case 2.  $1\leq \beta\leq l$ and $\gamma=l+1$}.
\eqa
g^{\beta,l+1}(\varphi)
&=&\dfrac{p_\beta \, P'(\varphi_\beta)}{4k\bi} \dsum_{\alpha=1}^{l+1} \dfrac{1}{p_\beta^2-q_\alpha^2} \left(\delta_{\alpha,l+1}+
\dfrac{2\,c_{\alpha,m} \,u_\alpha}{\lambda''(\psi_\alpha)}\dsum_{s=1}^l
\dfrac{\,p_s^2\,P'(\varphi_s)^2}{(q_{l+1}^2-p_s^2)(q_\alpha^2-p_s^2)}\right)\nn\\
&=& \dfrac{p_\beta \, P'(\varphi_\beta)}{4k\bi} \left(\dfrac{1}{p_\beta^2-q_{l+1}^2}-
\dsum_{s=1}^{l}
\dfrac{\,p_s^2\,P'(\varphi_s)^2}{q_{l+1}^2-p_s^2}
\dsum_{\alpha=1}^{l+1}\dfrac{2\,c_{\alpha,m} \,u_\alpha}{\lambda''(\psi_\alpha)(q_\alpha^2-p_s^2)(q_\alpha^2-p_\beta^2)}\right)\nn\\
&=& 0. \nn
\eeqa

\medskip

\noindent \underline{\bf Case 3. $\beta=\gamma=l+1$}.
\eqa
g^{l+1,l+1}(\varphi)&=&\dsum_{\alpha=1}^{l+1}
\dfrac{u_{\alpha}\,\lambda''(\psi_\alpha)}{8k^2 \, c_{\alpha,m}}\left(\dfrac{\delta_{\alpha,l+1}}{\lambda(\psi_{\alpha})}+\dfrac{2\,c_{\alpha,m} }{\lambda''(\psi_\alpha)}\dsum_{s=1}^l
\dfrac{\,p_s^2\,P'(\varphi_s)^2}{(q_{l+1}^2-p_s^2)(q_\alpha^2-p_s^2)}  \right)^2\nn\\
&=& \frac{1}{8k^2} \frac{\lambda''(\psi_{l+1})}{\lambda(\psi_{l+1})}+\frac{1}{2k^2} \dsum_{s=1}^l\frac{p_s^2\,P'(\varphi_s)^2}{(q_{l+1}^2-p_s^2)^2}\nn\\
&+& \frac{1}{2k^2}\dsum_{s,j=1}^l\frac{p_s^2\,p_j^2\,P'(\varphi_s)^2\,P'(\varphi_j)^2}{(q_{l+1}^2-p_s^2)\,(q_{l+1}^2-p_j^2)} \dsum_{\alpha=1}^{l+1} \dfrac{c_{\alpha,m} \,u_\alpha}{\lambda''(\psi_\alpha)(q_\alpha^2-p_s^2)(q_\alpha^2-p_j^2)}
\nn\\
&=&-\frac{1}{4k^2}\left(k+\dsum_{s=1}^l\frac{p_s^2\,P'(\varphi_s)^2}{(q_{l+1}^2-p_s^2)^2}\right)+\frac{1}{2k^2} \dsum_{s=1}^l\frac{p_s^2\,P'(\varphi_s)^2}{(q_{l+1}^2-p_s^2)^2}\nn\\
&+&\frac{1}{4k^2} \dsum_{s,j=1}^l\frac{p_s^2\,p_j^2\,P'(\varphi_s)^2\,P'(\varphi_j)^2}{(q_{l+1}^2-p_s^2)\,(q_{l+1}^2-p_j^2)}  \frac{\delta_{s j}}{p^2_j\,(p^2_j-q_{l+1}^2)}\nn\\
&=&-\frac{1}{4k}.\nn
\eeqa
Using \eqref{fm2.10}, it is easy to know that the intersection form $g^{\alpha\beta}(\varphi)$
coincides with $(~,~)^{\sptilde}$ defined in \eqref{DS3.1} and \eqref{DS3.2}. The coincidence of the metric
\eqref{fm2.3} with the metric \eqref{DS3.18} follows \eqref{DS5.25}. We thus complete the proof of the theorem.
\end{proof}

\begin{rem}  On the orbit space of the extended affined Weyl group $\widetilde{W}^{(k)}(D_{k+2})$,
Dubrovin and Zhang constructed a \ian{quasi-}homogenous polynomial Frobenius structure,
denoted by  $\mathcal{M}_{\mathrm{DZ}}^{(k)}(D_{k+2})$ which is isomorphic to
$\mathfrak{M}_{k,1,1}$. Actually, in this case, there is a  tri-polynomial description introduced in
\cite{P2010, takahashi}, also used in \cite{DLZ2012}.
\end{rem}

\section{Concluding Remarks}

For the root systems of type $B_l, C_l$ and $D_l$,
we have constructed families of Frobenius manifold structures on the orbit
spaces of the extended affine Weyl groups $\widetilde{W}^{(k)}(R)$ with respect to
the choice of an arbitrary vertex on the Dynkin diagram, as was suggested in
\cite{slodowy}, motivated by the results of \cite{wir,LO1,LO2}.
{In our construction for the root system $C_l$, we perform the following two steps:
\begin{enumerate}
\item[i)] We fix the k-th vertex of the
Dynkin diagram and define an extension of the affine Weyl group, and construct
a symmetric bilinear form $(g^{ij})$ on the cotangent space of the orbit space of the extended affine Weyl group.
\item[ii)] We find a unity vector field $e$ which is labeled by an integer $0\le m\le l-k$,
and construct a Frobenius manifold structure on the orbit space.
\end{enumerate}
We may ask the question whether one can perform the same construction for the root system of type $A_l$? Namely, we can perform the first step as it is done in \cite{DZ1998} for any $1\le k\le l$. For the second step, only one choice of the unity vector field $e$ is given in \cite{DZ1998} to construct a Frobenius manifold structure on the orbit space. Then is there other choices of the unity vector field? The answer is no, i.e. we can not find a different unity vector field satisfying the conditions of Lemma
\ref{lem-du}. It remains a challenging problem to understand whether the constructions of the present
paper can be generalized to the root systems of the types   $E_6$, $E_7$, $E_8$, $F_4$, $G_2$.}

Another open problem is to obtain an explicit realization of the integrable hierarchies associated with the Frobenius
manifolds of the type $\widetilde{W}^{(k)}(R)$. So far this problem was solved only for $R=A_l$, see \cite{CDZ,C2006,
DZ2001, DZ04,MT2008,MST2014} for details. We plan to study these problems in subsequent publications.

%To end this section, we remark

Observe that  the potential of the semisimple Frobenius manifold structures
constructed above from the root systems of type $(C_l,k,m=0)$ has  the form
$$
F=\frac12\,(t^k)^2 t^{l+1}+\frac12 t^k \sum_{\al,\beta\ne k} \eta_{\al\beta} t^\al t^\beta+
\sum_{j=0}^n f_j(t^2, t^3,\dots, t^l,\frac1{t^l})\, e^{j\, t^{l+1}},
$$
where $f_j(t^2, t^3,\dots, t^l,\frac1{t^l}), j=0,\dots, n$ are some polynomials of their independent variables.
The Euler vector field has the form
$$
E=\sum_{j=1}^l d_j \frac{\p}{\p t^j}+ r\frac{\p}{\p t^{l+1}}.
$$
Here $0<d_j<1, r> 0$, and they also satisfy the duality relation given in \eqref{zh-n1}, \eqref{zh-n2} for
the case $m=0$. We expect that these potentials of semisimple Frobenius manifolds, together
with the ones that are constructed in \cite{DZ1998}, exhaust all solutions of the
above form, and we have verified this for the cases when $l=1,2,3$ and $n\le 6$.

%%%%%%%%%%%%%%%%%%%%%%%%%%%%%%%%%%%%%%%%%%%%%%%%%%%%%

\vskip 0.3 cm
\noindent {\bf Acknowledgements.} The authors are grateful to the referee for very careful reading of the manuscript and many valuable suggestions.  They thank Si-Qi Liu
for his help on the proof of the lemma \ref{lem3.1}.
The research of Y.Z. is partially supported  by {NSFC (No.11171176,
  No.11371214, No.11471182). } The research of D.Z. is partially supported
by NSFC (No.11671371, No.11871446) and Wu Wen-Tsun Key Laboratory of Mathematics, USTC, CAS.

%%%%%%%%%%%%%%%%%%%%%%%%%%%%%
%\newpage

\end{document}